\numberwithin{equation}{section}
\definecolor{plum}{rgb}{0.8,0.2,0.8}
\title{\textbf{Degeneracy of entire curves into higher dimensional complex manifolds}}
\author{\textsc{Ya Deng}}
\date{}
\begin{document}
\maketitle

\newtheorem{thm}{Theorem}[section]
\newtheorem{rem}{Remark}[section]
\newtheorem{problem}{Problem}[section]
\newtheorem{conjecture}{Conjecture}[section]
\newtheorem{lem}{Lemma}[section]
\newtheorem{cor}{Corollary}[section]
\newtheorem{dfn}{Definition}[section]
\newtheorem{proposition}{Proposition}[section]
\newtheorem{example}{Example}[section]

\def\oc{\mathcal{O}} \def\oce{\mathcal{O}_E} \def\xc{\mathcal{X}}
\def\ac{\mathcal{A}} \def\rc{\mathcal{R}} \def\mc{\mathcal{M}}
\def\wc{\mathcal{W}} \def\fc{\mathcal{F}} \def\cf{\mathcal{C_F^+}}
\def\jc{\mathcal{J}}
\def\ic{\mathcal{I}}
\def\kc{\mathcal{K}}
\def\cb{\mathbb{C}}
\def\vol{\operatorname{vol}}
\def\ord{\operatorname{ord}}
\def\Im{\operatorname{Im}}
\def\dm{\mathrm{d}}

\def\as{{a^\star}} \def\es{e^\star}

\def\tl{\widetilde} \def\tly{{Y}} \def\om{\omega}

\def\cb{\mathbb{C}} \def\nb{\mathbb{N}} \def\nbs{\mathbb{N}^\star}
\def\pb{\mathbb{P}} \def\pbe{\mathbb{P}(E)} \def\rb{\mathbb{R}}
\def\zbb{\mathbb{Z}}

\def\hb{\bold{H}} \def\fb{\bold{F}} \def\eb{\bold{E}}
\def\pbb{\bold{P}}

\def\nab{\overline{\nabla}} \def\n{|\!|} \def\spec{\textrm{Spec}\,}
\def\cinf{\mathcal{C}_\infty} \def\d{\partial}
\def\db{\overline{\partial}}
\def\hess{\sqrt{-1}\partial\!\overline{\partial}}
\def\zb{\overline{z}} \def\lra{\longrightarrow}

\begin{abstract}
Pursuing McQuillan's philosophy in proving the Green-Griffiths conjecture for certain surfaces of general type, we deal with the algebraic degeneracy of entire curves tangent to holomorphic foliations by curves. Inspired by the recent work \cite{PS14},  we study the intersection of Ahlfors current $T[f]$ with tangent bundle $T_\fc$ of $\fc$, and derive some consequences. In particular, we introduce the definition of weakly reduced singularities for foliations by curves, which requires less work than the exact classification for foliations.  Finally we discuss the strategy to prove the Green-Griffiths conjecture for complex surfaces.
\end{abstract}

\tableofcontents

\section{Introduction}
In \cite{McQ98}, 
McQuillan proved the following striking theorem:
\begin{thm}\label{McQ1}
Let $X$ be a surface of general type and $\fc$ a holomorphic foliation on $X$. 
Then any entire curve $\fc:\cb \rightarrow X$ tangent to $\fc$ can not be Zariski dense.
\end{thm}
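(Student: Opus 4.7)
The plan is to argue by contradiction: assume $f:\cb\to X$ is an entire curve tangent to $\fc$ whose image is Zariski-dense, and derive a numerical contradiction with the bigness of $K_X$. The strategy, due to McQuillan, combines Nevanlinna-theoretic information from an Ahlfors current of $f$ with algebro-geometric information about foliations on general-type surfaces. First I would attach to $f$ an Ahlfors current $T[f]$: a positive closed $(1,1)$-current of unit mass on $X$, obtained as a weak limit of the normalized integration currents $\frac{1}{T_f(r_k)}\,f_\ast[\Delta_{r_k}]$ along a sequence of radii selected by the Ahlfors vanishing-boundary trick. Since $f$ is tangent to $\fc$, this current is directed by the foliation, and by Seidenberg's resolution of foliation singularities one may, after finitely many blow-ups, assume that $\fc$ has only reduced (or, in the relaxed sense introduced in this paper, weakly reduced) singularities; this modification does not affect the Zariski density of a lift of $f$.

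The core analytic input is the Brunella--McQuillan \emph{tautological inequality}
\[
T[f]\cdot T_\fc \;\geq\; 0,\qquad\text{equivalently}\qquad T[f]\cdot K_\fc \;\leq\; 0,
\]
valid for a Zariski-dense leaf tangent to a foliation with suitably mild singularities. To close the argument one invokes Bogomolov's theorem on foliations of general type: either $\fc$ is a fibration by rational or elliptic curves -- excluded since $X$ is of general type -- or $\fc$ has only finitely many algebraic invariant curves and $K_\fc$ is numerically large. Combining this dichotomy with the decomposition $K_X = K_\fc + N_\fc^\ast$, with $K_X$ big, and with the tautological inequality above forces $T[f]$ to concentrate on the finite collection of $\fc$-invariant algebraic curves, contradicting the Zariski density of $f$.

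The main obstacle is the local analysis at $\mathrm{Sing}(\fc)$: the Ahlfors current $T[f]$ may carry positive Lelong numbers at singular points of the foliation, so the global intersection numbers entering the argument need not, a priori, behave as they would in the smooth case without careful control of these singular contributions. This is precisely the point at which the paper's notion of weakly reduced singularity is designed to simplify -- and perhaps broaden -- the applicability of the tautological inequality and of the subsequent comparison with $K_X$.
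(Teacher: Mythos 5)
Your setup is right---contradiction, Ahlfors current $T[f]$, Seidenberg resolution, the tautological inequality $T[f]\cdot T_\fc\geq 0$ (after controlling the error term at singularities), and the decomposition $K_X=K_\fc+N_\fc^\ast$, i.e.\ $-K_X=T_\fc+N_\fc$. But the closing step does not work as stated. From those ingredients one only obtains
\[
0 < T[f]\cdot K_X \;=\; -\,T[f]\cdot T_\fc \;-\; T[f]\cdot N_\fc \;\leq\; -\,T[f]\cdot N_\fc,
\]
so the conclusion is merely $T[f]\cdot N_\fc<0$. This is \emph{not yet} a contradiction; it is the starting point for the hard half of McQuillan's argument. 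Your appeal to Bogomolov's dichotomy does not fill that hole: knowing that $\fc$ has finitely many algebraic invariant curves, or even that $K_\fc$ is ``numerically large'' in some sense, gives no a priori inequality between the Ahlfors current of a transcendental leaf and $N_\fc$ (and if $K_\fc$ were outright big, the tautological inequality alone would already finish the job, via the paper's Corollary after Theorem~\ref{formula tangent}, with no need for Bogomolov---but bigness of $K_\fc$ is precisely what is \emph{not} assumed in McQuillan's theorem). The claim that the situation ``forces $T[f]$ to concentrate on the finite collection of invariant algebraic curves'' is exactly the difficult content of McQuillan's Diophantine approximation, and it cannot be waved into existence from the dichotomy.

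The missing ingredient, and the one the paper uses, is Brunella's theorem (Theorem~\ref{brunella} in the paper): for \emph{any} foliation $\fc$ on a complex surface and any Zariski-dense entire leaf $f$, one has $T[f]\cdot N_\fc\geq 0$, with \emph{no} hypothesis on the singularities of $\fc$. Once this positivity of $T[f]\cdot N_\fc$ is available, the contradiction is immediate: $T[f]\cdot K_X=-T[f]\cdot T_\fc-T[f]\cdot N_\fc\leq 0$, against $T[f]\cdot K_X>0$ from bigness of $K_X$. Equivalently, the paper's Theorem~\ref{negative normal} (using the tangential inequality plus weakly reduced singularities plus $K_X$ big) derives $T[\hat f]\cdot N_{\hat\fc}<0$ on a modification, which directly contradicts Brunella. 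You should replace the Bogomolov step with an explicit citation and use of the normal-bundle inequality; as written, the proposal proves only half of what is needed.
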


The original proof of Theorem \ref{McQ1} is rather involved. Later on, several simplified proofs appeared, cf.\ \cite{Bru99} and \cite{PS14}. The idea in proving Theorem (\ref{McQ1}) is to argue by contradiction. One assumes that there exists a Zariski dense entire curve $f:\cb\rightarrow X$ which is tangent to $\fc$. Then, one studies the intersection of the Ahlfors current $T[f]$, which can be treated as a $(1,1)$-cohomology class in $X$, with the tangent bundle and the normal bundle of the foliation $\fc$. The above works proved that both of the intersections numbers are positive. However, since $K_X$ is big, then $T[f]\cdot K_X>0$, and by the equality $K^{-1}_X=T_\fc+N_\fc$, a contradiction is obtained.

In our paper, we give a generalization of McQuillan's result dealing with entire curves on higher dimensional complex manifolds, by pursuing the same philosophy. Firstly we obtain a formula for $T[f]\cdot T_\fc$, which is an improvement of that of~\cite{PS14}. Our first result is the following theorem:
\begin{thm}\label{formula tangent main}
	Let $(X,\fc)$ be a K\"ahler 1-foliated pair. 
	If $f:\cb\rightarrow X$ is a \emph{transcendental} entire curve (see Definition \ref{algebraic curve} below) tangent to $\fc$ whose image is not contained in $\text{Sing}(\fc)$, then
	\begin{equation}\nonumber
	\langle T[f], c_1(T_\fc)\rangle+T(f,\jc_{\fc})=\langle 
	T[f_1],\oc_{X_1}(-1)\rangle\geq0,
	\end{equation}
	where $\jc_{\fc}$ is a coherent ideal sheaf determined by the singularity of $\fc$, and $T(f,\jc_{\fc})$ is a non-negative real number representing the ``intersection" of $T[f]$ with $\jc_{\fc}\,;$ this number will be defined later.
\end{thm}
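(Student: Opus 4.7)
The plan is to use the canonical derivative lift of $f$ to the projectivized tangent bundle $X_1 := \mathbb{P}(T_X)$ and exploit the tangency of $f$ to $\fc$ to force this lift onto the closure of the graph of the meromorphic section of $X_1\to X$ determined by $\fc$. The equality in the theorem then reduces to a sheaf-theoretic comparison of line bundles on $X_1$ and $X$; the non-negativity is a consequence of McQuillan's tautological inequality.

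First I would define $f_1:\cb\to X_1$ by sending $z$ to the line $[f'(z)]\in \pb(T_{X,f(z)})$. Since $f$ is transcendental it is non-constant, and at the isolated zeros of $f'$ one extends $f_1$ by the first non-vanishing derivative (a Wronskian-type extension), so $f_1$ is a genuine entire curve. Because $f$ is tangent to $\fc$, one has $f'(z)\in T_{\fc,f(z)}$ away from $\text{Sing}(\fc)$, and hence $f_1(\cb)$ lies in the closure of the image of the meromorphic section $\sigma_\fc:X\dashrightarrow X_1$ cutting out the leaf directions. Since $f(\cb)\not\subset\mathrm{Sing}(\fc)$, this inclusion is non-trivial.

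The second step is the key sheaf-theoretic comparison. Writing $\fc$ locally as a vector field $v=\sum a_i\,\d/\d z_i$, one takes $\jc_\fc$ to be the coherent ideal $(a_1,\ldots,a_n)$ measuring the singularities of $\fc$; this is the ideal so that the inclusion $T_\fc\hookrightarrow T_X$ is exactly the map twisted by $\jc_\fc$. After the blow-up $\mu:\tl X\to X$ of $\jc_\fc$, the section $\sigma_\fc$ extends to a morphism $\tl\sigma:\tl X\to X_1$ with $\tl\sigma^*\oc_{X_1}(-1)\cong \mu^*T_\fc\otimes\oc_{\tl X}(-E)$, where $E$ is the exceptional divisor satisfying $\jc_\fc\cdot\oc_{\tl X}=\oc_{\tl X}(-E)$. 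Translating this identity into intersection numbers against the Ahlfors current $T[\tl f]$ of the lift $\tl f$ of $f$ to $\tl X$, and defining $T(f,\jc_\fc)$ precisely as $\langle T[\tl f],E\rangle$, one obtains the claimed equality; that $T(f,\jc_\fc)\geq 0$ is immediate because $E$ is effective. Finally, the non-negativity of $\langle T[f_1],\oc_{X_1}(-1)\rangle$ follows from McQuillan's tautological inequality applied in $X_1$, namely $\langle T[f_1],c_1(\oc_{X_1}(1))\rangle\leq 0$ for any derivative lift of an entire curve into the projectivized tangent bundle.

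The main obstacle will be giving $T(f,\jc_\fc)$ an intrinsic definition directly on $X$ (as an Ahlfors–Nevanlinna functional of $f$ and $\jc_\fc$, independent of the choice of modification), and verifying that the value computed on any log-resolution coincides with it. This is where the improvement over \cite{PS14} must lie: one needs the contribution of $\jc_\fc$ to the intersection to be an \emph{exact} non-negative correction term rather than a mere error, which forces a careful local analysis near each component of $\mathrm{Sing}(\fc)$. A secondary subtlety is ensuring the Wronskian-type extension of $f_1$ at the critical points of $f$ does not introduce additional boundary terms in the current-theoretic computation of $\langle T[f_1],\oc_{X_1}(-1)\rangle$; this should be handled by a standard Jensen-type identity for $\log|f'|$.
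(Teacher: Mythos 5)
Your plan is a genuine alternative to the paper's proof, which never explicitly invokes the extended morphism $\tl\sigma:\tl X\to X_1$: the paper works directly on $\cb$, writing $f'(\tau)=\lambda_\alpha(\tau)\,v_\alpha(f(\tau))$, computing $dd^c\log|f'|^2_\omega$ once via the singular metric $h_s$ on $T_\fc$ and once via the tautological lift, and then applying Jensen's formula; the quantity $T(f,\jc_\fc)$ arises there as $\lim (N_{f,\jc_\fc}+m_{f,\jc_\fc})/T_f$ and is only afterwards (Remark \ref{sheaf}) identified with an intersection number on a log resolution. Your blow-up picture, when done carefully, would give the same conclusion with less Nevanlinna-theoretic bookkeeping.

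However, the key line-bundle identity has the wrong sign. You write $\tl\sigma^*\oc_{X_1}(-1)\cong\mu^*T_\fc\otimes\oc_{\tl X}(-E)$, but the correct isomorphism is $\tl\sigma^*\oc_{X_1}(-1)\cong\mu^*T_\fc\otimes\oc_{\tl X}(+E)$. Indeed, writing $\mu^*a^i_\alpha=e_\alpha\,\tl a^i_\alpha$ with $e_\alpha$ a local generator of $\mu^{-1}\jc_\fc=\oc_{\tl X}(-E)$, the tautological subbundle $\tl\sigma^*\oc_{X_1}(-1)\subset\mu^*T_X$ is spanned by $\tl v_\alpha=e_\alpha^{-1}\mu^*v_\alpha$; the division by $e_\alpha$ twists $\mu^*T_\fc$ by $\oc_{\tl X}(E)$, not $\oc_{\tl X}(-E)$. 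With your sign the conclusion would be $\langle T[f],c_1(T_\fc)\rangle-T(f,\jc_\fc)=\langle T[f_1],\oc_{X_1}(-1)\rangle$, which contradicts the statement; with the correct $+E$ you recover the theorem. This is also what the algebraic sanity check demands: the formula $C\cdot T_\fc+Z(\fc,C)=\chi(C)$ with $Z(\fc,C)\geq 0$ (Remark 2.15 in the paper) forces the effective correction to appear with a plus sign on the $T_\fc$ side. A secondary point you should make explicit is the identity $\langle T[f_1],\eta\rangle=\langle T[\tl f],\tl\sigma^*\eta\rangle$, which follows from $f_1=\tl\sigma\circ\tl f$ once one notes (via Lemma \ref{high tower} and Lemma \ref{bimero def}) that both Ahlfors currents may be normalized by the same $T_{f,\omega}(r)$; without that, translating the bundle isomorphism into an equality of intersection numbers is not automatic.
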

If $X$ is a complex surface, as is proved by McQuillan, Theorem \ref{formula tangent main} can be improved to the extent that
\begin{equation}\label{positive tangent}
T[f]\cdot T_\fc\geq0.
\end{equation}
On the one hand, by pursuing  his philosophy of ``diaphantine approximation", one can generalize (\ref{positive tangent}) to higher dimensional manifolds, under some assumptions on the foliation:
\begin{thm}
	Let $\fc$ be a foliation by curves on the $n$-dimensional complex manifold $X$, 
	such that the singular set $\text{Sing}(\fc)$ of the foliation $\fc$ is a set of absolutely isolated singularities (this will be defined later). If $f:\cb\rightarrow X$ is an Zariski dense entire curve which is tangent to $\fc$, then one can blow-up $X$ a finite number of times to get a new birational model $(\widetilde{X},\widetilde{\fc})$ such that
	$$
	T[\widetilde{f}]\cdot T_{\widetilde{\fc}}=0.
	$$
	
	Moreover, if $\fc$ is a foliation by curves with absolutely isolated singularities that are simple (see Definition \ref{simple singularity}), and whose canonical bundle $K_\fc$ is big, then every entire curve $f:\cb\rightarrow X$ that is tangent to $\fc$ is algebraically degenerate.
\end{thm}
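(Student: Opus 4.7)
I would handle the two assertions separately, both by contradiction in McQuillan's style.

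\textbf{Part 1 (vanishing of the tangent intersection).} Assume $f:\cb\to X$ is a Zariski dense entire curve tangent to $\fc$. Applying Theorem \ref{formula tangent main} already yields
\begin{equation*}
\langle T[f], c_1(T_\fc)\rangle + T(f,\jc_\fc) = \langle T[f_1],\oc_{X_1}(-1)\rangle \geq 0.
\end{equation*}
The task is to produce the reverse inequality after passing to a suitable birational model. This is where the absolutely isolated singularity hypothesis enters: by definition, every infinitely near singularity over $\mathrm{Sing}(\fc)$ is again isolated, so a finite sequence of blow-ups at the successive singular centres produces a model $(\widetilde{X},\widetilde{\fc})$ on which the local contribution of $\jc_{\widetilde{\fc}}$ is as tame as possible. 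I would then carry out the McQuillan-type diophantine approximation argument leaf-by-leaf, bounding the proximity of $f$ to the (controlled) singular locus and to the tautological divisor on the projectivised tangent bundle. The outcome is $\langle T[\widetilde{f}_1],\oc_{\widetilde{X}_1}(-1)\rangle=0$ and $T(\widetilde{f},\jc_{\widetilde{\fc}})=0$, hence $T[\widetilde{f}]\cdot T_{\widetilde{\fc}}=0$.

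\textbf{Part 2 (algebraic degeneracy).} Again argue by contradiction: suppose $f$ is Zariski dense. Part 1 gives $T[\widetilde{f}]\cdot T_{\widetilde{\fc}}=0$, and since $\widetilde{\fc}$ is a foliation by curves, $K_{\widetilde{\fc}}=-T_{\widetilde{\fc}}$, so $T[\widetilde{f}]\cdot K_{\widetilde{\fc}}=0$. The simple singularity hypothesis is used precisely to guarantee that under each blow-up $\pi$ at a singular point, $K_{\widetilde{\fc}}$ is the pullback $\pi^{*}K_{\fc}$ plus an effective exceptional divisor, so bigness of $K_\fc$ is inherited by $K_{\widetilde{\fc}}$. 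Write then $K_{\widetilde{\fc}}\equiv A+D$ with $A$ ample and $D$ effective. Since $T[\widetilde{f}]$ is a nonzero closed positive current, $T[\widetilde{f}]\cdot A>0$; and since $\widetilde{f}(\cb)$ is Zariski dense, it cannot be contained in any component of $D$, so $T[\widetilde{f}]\cdot D\geq 0$. Summing contradicts $T[\widetilde{f}]\cdot K_{\widetilde{\fc}}=0$, so $f$ is algebraically degenerate.

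\textbf{Main obstacle.} The hard step is Part 1, specifically the higher-dimensional analogue of McQuillan's tautological inequality $T[f]\cdot T_\fc\leq 0$ near singularities of $\fc$. In dimension two this rests on a very explicit classification of reduced singularities; in dimension $n$ one must replace that classification by a purely birational input, and the absolutely isolated condition is designed for this purpose. I expect the technical core to be the termination of the blow-up tower together with a uniform control of the local error term $T(f,\jc_\fc)$ at each stage, analogous to McQuillan's use of Kawamata-type logarithmic bounds; the proof of bigness preservation for simple singularities in Part 2 should then be comparatively routine.
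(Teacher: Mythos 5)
Your sketch for Part 2 is sound and close to what the paper leaves implicit: from $T[\widetilde{f}]\cdot T_{\widetilde{\fc}}=0$ plus $K_{\widetilde{\fc}}=\pi^*K_\fc$ (non-dicriticality gives equality here, not merely an effective excess), bigness of $K_\fc$ yields a contradiction via $T[\widetilde{f}]\cdot(A+D)>0$.

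Your Part 1, however, has a genuine gap and also misdescribes the mechanism. The diophantine-approximation machinery (Theorems \ref{diophantine}, \ref{separatrix}, and the logarithmic formula of Theorem \ref{log version}) proves only the \emph{one-sided} inequality $T[\widetilde{f}]\cdot c_1(T_{\widetilde{\fc}})\geq 0$; it does \emph{not} produce the reverse inequality, and a fortiori does not show $\langle T[\widetilde{f}_1],\oc_{\widetilde{X}_1}(-1)\rangle=0$ or $T(\widetilde{f},\jc_{\widetilde{\fc}})=0$ individually. Those two vanishings are neither established nor used by the paper and there is no reason to expect them. The missing ingredient that turns $\geq 0$ into $=0$ is a separate input of a completely different nature: Brunella's theorem that $K_{\widetilde{\fc}}$ is pseudo-effective whenever $\widetilde{\fc}$ has a transcendental (here, Zariski-dense) leaf, which gives $T[\widetilde{f}]\cdot c_1(K_{\widetilde{\fc}})\geq 0$, i.e.\ $T[\widetilde{f}]\cdot c_1(T_{\widetilde{\fc}})\leq 0$. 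Without invoking this, your plan proves an inequality, not the equality claimed.

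A second, more structural misreading: you frame the argument around ``termination of the blow-up tower together with a uniform control of the error term.'' The actual argument is not a terminating process that kills the error. After resolving to a model with simple, non-dicritical singularities (this finite resolution via Theorem \ref{resolve ais} is what gives $\widetilde{X}$), the proof runs an \emph{infinite} sequence of blow-ups at the unique simple point; the key estimate from Theorem \ref{diophantine} is that $\lceil k^{1/n}\rceil\alpha\,\mu_k^*H - kE_k$ is effective, hence $T[f_k]\cdot E_k\to 0$, and the logarithmic tangent formula then squeezes $\langle T[f_k],c_1(T_{\fc_k})\rangle$ from below by a quantity tending to $0$. The inequality is obtained in the limit, using $T[f_k]\cdot c_1(T_{\fc_k})=T[f]\cdot c_1(T_\fc)$ at every stage (by non-dicriticality), not by reaching a finite stage where the error term literally vanishes. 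You should also note that the log version of the tangent formula (Theorem \ref{log version}), together with the simple-corner/simple-point dichotomy of Theorem \ref{separatrix}, is essential to control the truncated counting term $N^{(1)}$ that appears once the exceptional divisors accumulate into the boundary $D$; a bare appeal to Theorem \ref{formula tangent main} is not enough.
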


 On the other hand,  Theorem \ref{formula tangent main} leads us to the fact that the error term $T(f,\jc_{\fc})$ is controllable, if the singularities of $\fc$ are not too ``bad'' (they are called \emph{weakly reduced singularities}  in Section \ref{normal section}). The theorem is as follows:

\begin{thm}\label{negative normal main}
	Let $X$ be a projective manifold of dimension $n$ endowed with a 1-dimensional foliation 
	$\fc$ with weakly reduced singularities. If $f$ is a Zariski dense entire curve tangent to $\fc$, satisfying $\langle T[f], 
	K_X\rangle>0$ (e.g. $K_X$ is big), then we have
	$$
	\langle T[\hat{f}], c_1(N_{\hat{\fc}})\rangle<0
	$$
	for some birational pair $(\hat{X},\hat{\fc})$.
\end{thm}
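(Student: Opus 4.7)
The plan is to combine the tangent-bundle formula of Theorem~\ref{formula tangent main} with the standard Chern-class relation for foliations by curves, and then to use the weakly reduced hypothesis to absorb the error term $T(f,\jc_\fc)$ after passing to a suitable birational model.

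From the defining sequence $0\to T_\fc\to TX\to N_\fc\to 0$ on $X\setminus\mathrm{Sing}(\fc)$, whose complement has codimension at least two, one obtains $c_1(T_\fc)+c_1(N_\fc)=c_1(TX)=-K_X$. Pairing with the Ahlfors current $T[f]$ and substituting the tangent identity of Theorem~\ref{formula tangent main} gives
$$
\langle T[f],c_1(N_\fc)\rangle
= -\langle T[f],K_X\rangle-\langle T[f_1],\oc_{X_1}(-1)\rangle+T(f,\jc_\fc).
$$
The first term on the right is strictly negative by hypothesis and the second is non-positive by Theorem~\ref{formula tangent main}, so strict negativity of the whole expression reduces to establishing
$$
T(f,\jc_\fc)<\langle T[f],K_X\rangle+\langle T[f_1],\oc_{X_1}(-1)\rangle
$$
on some suitable model.

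The content of the weakly reduced hypothesis should be precisely that this inequality can be arranged after finitely many blow-ups adapted to the singularities of $\fc$. I would construct a birational morphism $\pi:\hat{X}\to X$ with lifted foliation $\hat\fc$ for which either $\jc_{\hat\fc}=\oc_{\hat{X}}$ (so the defect vanishes outright) or, more realistically, $T(\hat{f},\jc_{\hat\fc})\leq\langle T[\hat{f}_1],\oc_{\hat{X}_1}(-1)\rangle+o(T(r,\hat{f}))$. Positivity of the canonical class propagates to $\hat{X}$: since $K_{\hat{X}}$ differs from $\pi^{*}K_X$ by non-negative multiples of exceptional divisors and the Zariski dense lift $\hat{f}$ pairs non-negatively with each exceptional divisor, one has $\langle T[\hat{f}],K_{\hat{X}}\rangle\geq\langle T[f],K_X\rangle>0$. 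Substituting into the displayed identity applied to $(\hat{X},\hat\fc)$ then yields $\langle T[\hat{f}],c_1(N_{\hat\fc})\rangle<0$.

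The principal obstacle is the middle step: translating the local weakly reduced condition into the Nevanlinna-theoretic bound on $T(\hat{f},\jc_{\hat\fc})$. As the introduction stresses, the point of ``weakly reduced'' is to avoid a full Seidenberg-type classification of foliation singularities, so the local analysis at each singular point should be comparatively coarse, proceeding by iterated blow-ups and repeated application of the tautological (Ahlfors--Schwarz) inequality, so that after finitely many steps the residual contribution of $\jc_{\hat\fc}$ is of the same nature as, and can be absorbed by, the non-positive term $-\langle T[\hat{f}_1],\oc_{\hat{X}_1}(-1)\rangle$. The freedom to work on \emph{some} birational pair rather than on $X$ itself is what makes this strategy realistic.
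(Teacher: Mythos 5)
Your setup is right as far as it goes: the Chern class identity $c_1(T_\fc)+c_1(N_\fc)=-K_X$, the substitution of the tangent formula, and the observation that a blow-up can only help the pairing with the canonical class are all correct and match the paper. But you have not actually used the weakly reduced hypothesis, and you acknowledge the gap yourself: you do not know how to bound $T(\hat f,\jc_{\hat\fc})$ on the blown-up model, and neither of the two scenarios you sketch (triviality of $\jc_{\hat\fc}$, or an asymptotic bound by the tautological term) is what the hypothesis delivers. In particular, a log resolution of $\jc_\fc$ does \emph{not} produce a foliation with trivial ideal sheaf upstairs, so the optimistic branch fails, and you give no mechanism for the pessimistic branch. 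This is the crux of the proof, not a technicality.

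The paper's argument avoids estimating $T(\hat f,\jc_{\hat\fc})$ altogether. It works with the Chern identity directly on the resolution $\hat X$: one writes $-c_1(N_{\hat\fc})=K_{\hat X}+c_1(T_{\hat\fc})=\pi^*K_X+\pi^*c_1(T_\fc)+K_{\hat X/X}$, where condition (1) of Definition~\ref{reduced singularity} ($T_{\hat\fc}=\pi^*T_\fc$) is what lets you replace $T_{\hat\fc}$ by $\pi^*T_\fc$. Then condition (2), $\ic(\jc_\fc)=\oc_X$, is interpreted via $\ic(\jc_\fc)=\pi_*\oc_{\hat X}(K_{\hat X/X}-D)$ with $\pi^*\jc_\fc=\oc_{\hat X}(-D)$: triviality of the multiplier ideal is equivalent to $K_{\hat X/X}-D$ being effective, hence $\langle T[\hat f],K_{\hat X/X}\rangle\geq\langle T[\hat f],D\rangle=T(f,\jc_\fc)$. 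Substituting, pushing forward by $\pi_*T[\hat f]=T[f]$, and invoking $\langle T[f],T_\fc\rangle+T(f,\jc_\fc)\geq 0$ from Theorem~\ref{formula tangent} gives $-\langle T[\hat f],c_1(N_{\hat\fc})\rangle\geq\langle T[f],K_X\rangle>0$. So the error term $T(f,\jc_\fc)$ is not made small; it is absorbed exactly by the relative canonical bundle on $\hat X$, and this absorption is what ``weakly reduced'' encodes. You should rework the middle of your argument around this multiplier-ideal mechanism rather than trying to shrink the defect $T(\hat f,\jc_{\hat\fc})$ on the resolution.
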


\begin{rem}{\rm
	Our definition of ``weakly reduced singularities" is actually weaker than the usual concept of reduced singularities, which always requires a lot of checking (e.g.\ a classification of singularities). We only need to focus on the multiplier ideal sheaf of $\jc_\fc$, instead of trying to understand the exact behavior of singularities.}
\end{rem}

It is notable that the following strong result due to M. Brunella implies a conclusive contradiction in combination with Theorem \ref{negative normal main}, in the case of dimension 2.
\begin{thm}\label{brunella}
	Let $X$ be a complex surface endowed with a foliation $\fc$ (no assumption is made for singularities of $\fc$ here). If $f:\cb\rightarrow X$ is a Zariski dense entire curve tangent to $\fc$, then we have
	$$
	\langle T[f],N_{\fc}\rangle\geq0.
	$$ 
\end{thm}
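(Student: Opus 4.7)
My plan is to implement Brunella's strategy, combining the Bott partial connection on $N_\fc$ with a Nevanlinna logarithmic derivative estimate along $f$. Away from $\text{Sing}(\fc)$, the exact sequence $0\to T_\fc\to T_X\to N_\fc\to 0$ produces a canonical partial connection $\nabla^B$ on $N_\fc$ along the leaf direction, defined by $\nabla^B_v\bar s := \overline{[v,\tilde s]}$ for $v$ a local section of $T_\fc$ and $\tilde s$ a local lift of $s$ to $T_X$. This Bott connection is flat along each leaf and extends meromorphically across the (isolated) singular points of $\fc$ on the surface; it is the mechanism that converts the transverse curvature of $N_\fc$ into something accessible by Nevanlinna theory along $f$.

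Fix a smooth Hermitian metric $h$ on $N_\fc$ and a K\"ahler form $\omega$ on $X$. Writing the Ahlfors averaging explicitly, one has
$$\langle T[f], N_\fc\rangle \;=\; \lim_{k\to\infty}\frac{1}{T_f(r_k)}\int_{D(r_k)} f^*\Theta_h(N_\fc)$$
for a suitable Ahlfors sequence $r_k\to\infty$. Since $f$ is tangent to $\fc$, its image lies in a single (possibly non-closed) leaf $L$, and $N_\fc|_L$ is $\nabla^B$-flat. Locally on $L\setminus\text{Sing}(\fc)$ there is a holomorphic frame $s$ of $N_\fc$ with $\nabla^B s=0$, so that the Chern form pulls back as $f^*\Theta_h(N_\fc) = -\,dd^c\log|f^*s|_h^2$ on the complement of a discrete set in $\cb$.

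The crux of the argument is then to apply the tautological inequality, or equivalently the Nevanlinna logarithmic derivative lemma, to the pulled-back section $f^*s$, obtaining
$$\int_{D(r)} f^*\Theta_h(N_\fc) \;\geq\; -\,O\bigl(\log T_f(r)+\log r\bigr)$$
outside an $r$-set of finite Lebesgue measure. Dividing by $T_f(r_k)$ and taking the limit yields the desired non-negativity $\langle T[f], N_\fc\rangle\ge 0$. The Zariski density of $f$ enters here to ensure that $f^*s\not\equiv 0$ for each local choice of $s$ and to rule out pathological concentration of $f$ on an invariant algebraic curve where the local flat frames cannot be coherently assembled.

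The main obstacle is the semi-global nature of the frame $s$: flat frames of $\nabla^B$ exist only locally along $L$, and they must be glued coherently across the monodromy of the Bott connection around the leaf and across the meromorphic singularities at $\text{Sing}(\fc)$. One produces a meromorphic cochain $\{s_i\}$ on a suitable covering of $L$, with transition multipliers whose logarithmic growth, pulled back along $f$, is absorbed into the $O(\log T_f(r))$ error term of the logarithmic derivative lemma; the residual contribution from the discrete preimage $f^{-1}(\text{Sing}(\fc))$ is analyzed by a Baum--Bott type residue computation. It is this step that genuinely uses $\dim X=2$, because isolated foliation singularities on a surface allow the semi-global gluing to be reduced to a finite number of local residue contributions with controlled sign.
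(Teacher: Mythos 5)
The paper does not actually prove Theorem~\ref{brunella}: it states it as a known result of Brunella and cites \cite{Bru99}, so there is no internal proof to compare your argument against. What you have written is in the same broad spirit as Brunella's approach (Bott partial connection, flatness along leaves, conversion of the transverse curvature into a Nevanlinna growth estimate via the logarithmic derivative lemma), but as it stands it is a roadmap rather than a proof, and the two hard steps remain unexecuted.

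Concretely, the gap is in the paragraph you label ``the crux''. You assert that applying ``the tautological inequality, or equivalently the Nevanlinna logarithmic derivative lemma, to the pulled-back section $f^*s$'' yields
$\int_{D(r)} f^*\Theta_h(N_\fc) \geq -O(\log T_f(r)+\log r)$,
but the logarithmic derivative lemma is a statement about meromorphic functions, not about norms of sections of an abstract line bundle pulled back along a transcendental curve. To extract such an estimate you must express the Bott connection form as a logarithmic derivative of local defining data of the foliation (for instance, writing $\fc$ locally as $\ker\omega_\alpha$ and relating the connection coefficient to $\partial_v\log(\cdot)$ of the $\omega_\alpha$), pull that relation back along $f$, and only then invoke the lemma; none of this is carried out. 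Moreover, the $\nabla^B$-parallel frame $s$ exists only locally, and $\log|f^*s|_h^2$ has no a priori bound — it grows according to the integral of the connection form along $f$, which is precisely what you are trying to control, so the argument is circular until that link is made explicit. The second unresolved step is the semi-global gluing: you acknowledge that the local flat frames cannot be assembled coherently because of holonomy and of the singular points that $L$ passes near, and you assert that the transition multipliers' growth is ``absorbed into the $O(\log T_f(r))$ error'' and that the singular contributions reduce to ``residue contributions with controlled sign,'' but no bookkeeping is given and the sign control near a saddle-node or a resonant singularity is exactly the delicate point of Brunella's original argument. Finally, you should also note the compatibility of the exceptional $r$-set in the logarithmic derivative lemma with the choice of Ahlfors subsequence $r_k$, a standard but necessary technical point that the proposal glosses over.
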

Therefore, we get another proof of Theorem \ref{McQ1} without using the refined formula (\ref{positive tangent}) immediately. This leads us to observe that if one can resolve any singularities of the 1-dimensional foliation $\fc$ into weakly reduced ones, and generalize the previous Brunella Theorem to higher dimensional manifolds, one could infer the Green-Griffiths conjecture for surfaces of general type. 

\begin{thm}\label{Green}
Assume that Theorem \ref{brunella} holds for a directed variety $(X,\fc)$ where $X$ is a base of arbitrary dimension and $\fc$ has rank $1$, and that one can resolve the singularities of $\fc$ into weakly reduced ones. Then every entire curve drawn in a projective surface of general type must be algebraically degenerate.
\end{thm}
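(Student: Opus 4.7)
The plan is to reduce the Green-Griffiths conjecture for a projective surface $X$ of general type to a McQuillan--Brunella type contradiction on Demailly's jet tower, where the assumed higher-dimensional Brunella theorem (hypothesis \textbf{(H1)}) and the resolution of $1$-foliations into weakly reduced singularities (hypothesis \textbf{(H2)}) can both be invoked. Suppose for contradiction that $f:\cb\to X$ is a Zariski dense entire curve. Since $X$ is of general type and $K_X$ is big, Demailly's existence theorems for invariant jet differentials supply, for $k$ sufficiently large, nontrivial sections of $\oc_{X_k}(m)\otimes \pi_k^{*}A^{-1}$ for some $m\geq 1$ and some ample line bundle $A$ on $X$. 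The fundamental vanishing theorem then forces the $k$-jet lift $f_{[k]}:\cb\to X_k$ to land in the common zero locus $Z\subsetneq X_k$ of all such sections.

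Passing to a smooth model $Y\to Z$, the tautological rank-$1$ subbundle of $T_{X_k}$ restricts to a rank-$1$ foliation $\fc$ on $Y$, and $\tilde f_{[k]}:\cb\to Y$ is by construction tangent to $\fc$ and remains Zariski dense in $Y$. Using \textbf{(H2)}, one passes to a further birational model $(\tilde Y,\tilde\fc)$ on which $\tilde\fc$ has only weakly reduced singularities. Combining the Zariski density of $f$ with the bigness of $\oc_{X_k}(1)|_{Y}$ coming from Demailly's strong general type results for surfaces, one checks that the Ahlfors current on $\tilde Y$ satisfies $\langle T[\tilde f_{[k]}],K_{\tilde Y}\rangle>0$. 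Theorem~\ref{negative normal main} then produces a birational model $(\hat Y,\hat\fc)$ with
\begin{equation}\nonumber
\langle T[\hat f],\,c_1(N_{\hat\fc})\rangle<0.
\end{equation}

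At this point the assumed generalization \textbf{(H1)} of Theorem~\ref{brunella} applied to the rank-$1$ directed variety $(\hat Y,\hat\fc)$ gives $\langle T[\hat f],c_1(N_{\hat\fc})\rangle\geq 0$, contradicting the preceding strict inequality. Hence no Zariski dense entire curve can exist in $X$, and Green-Griffiths follows.

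The main obstacle is the positivity statement $\langle T[\tilde f_{[k]}],K_{\tilde Y}\rangle>0$ required to trigger Theorem~\ref{negative normal main}. Although the bigness of the Green-Griffiths algebra furnishes bigness of $\oc_{X_k}(1)$ on the stratum $Y$, converting this into positivity against the \emph{ambient} canonical $K_{\tilde Y}$, and propagating it faithfully through the resolution procedure of \textbf{(H2)}, demands a delicate bookkeeping of how Demailly's tautological filtration transforms under successive blow-ups both of $X_k$ and of the foliation. A secondary difficulty is that the jet-differential zero locus $Y$ is typically singular, so one must either fix a preferred desingularization and track all intersection numbers through it, or extend Theorem~\ref{negative normal main} to the setting of singular ambient spaces.
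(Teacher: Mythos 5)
Your overall skeleton matches the paper's: pass to Demailly--Semple jets, confine $f_k$ to a proper subvariety $Z$, extract a rank-$1$ foliation on a resolution $Y$ of $Z$, use hypothesis (H2) to get weakly reduced singularities, invoke Theorem~\ref{negative normal main}, and close with (H1). However, you explicitly flag as ``the main obstacle'' the very step the paper actually proves, namely the positivity $\langle T[g],K_Y\rangle>0$, and you leave it unresolved. That is the genuine gap. The paper closes it by first choosing the \emph{critical} stage $k$: the maximal $k$ for which $f_j$ is Zariski dense in $X_j$ for all $j\le k$ but $f_{k+1}$ degenerates into a proper subvariety $Z\subset X_{k+1}$. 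Because $\operatorname{rank} T_X=2$, $Z$ is then a divisor dominating $X_k$, which is what produces a ramified $m\!:\!1$ cover $\pi_{k+1,k}\colon Z\to X_k$ and hence an effective ramification divisor $R$ on the smooth model $Y$ with $K_Y\sim p^*K_{X_k}+R$. The positivity of $\langle T[f_k],K_{X_k}\rangle$ is then computed directly and completely from the identity $\det T_{X_k}=k\,\pi_{k,0}^*\det T_X\otimes\oc_{X_k}(k+1,k,\ldots,2)$, the tautological inequality $\langle T[f_j],\oc_{X_j}(-1)\rangle\ge 0$ at every stage $j\le k$ (which is only available because $f_j$ is still Zariski dense, hence transcendental), and bigness of $K_X$ giving $\langle T[f],K_X\rangle>0$; adding $\langle T[g],R\rangle\ge 0$ yields $\langle T[g],K_Y\rangle>0$ without any appeal to bigness of $\oc_{X_k}(1)|_Y$ or a ``delicate bookkeeping'' through (H2)'s blow-ups.

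In short: your route is the paper's route, but you stop short of the essential computation. Two concrete deficiencies should be repaired. First, you must pick out the critical $k$ (density up to $k$, degeneracy at $k+1$); this is what makes each tautological term nonnegative and makes $Z$ a divisor. Second, you should not invoke ``bigness of $\oc_{X_k}(1)|_Y$''; the correct mechanism is the exact expression of $\det T_{X_k}$ in terms of $\pi_{k,0}^*K_X$ and the tautological line bundles, combined with the ramification divisor $R$ of the generically finite map $Y\to X_k$. Without these, the implication $\langle T[g],K_Y\rangle>0$ is unsubstantiated and the argument does not close.
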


\section{Proof of the Main Theorems}

\subsection{Notions and definitions}\label{notions}
For any coherent ideal sheaf $\jc\subset \oc_X$, one can construct a global 
quasi-plurisubarmonic function $\varphi_\jc$ on $X$ such that 
$$
\varphi_\jc=\log(\sum_i|g_i|^2)+\oc(1)
$$
where $(g_i)$ are local holomorphic functions that generate the ideal $\jc$. We 
call $\varphi_\jc$ the \emph{characteristic function associated to the coherent 
sheaf}. For any entire curve $f$ which is not contained in the subscheme $Z(\jc)$, we 
can write
$$
\varphi_\jc\circ f(t)|_{B(r)}=\sum_{|t_j|< r} \nu_j\log|t-t_j|^2+\oc(1),
$$
and here we call $\nu_j$ the \emph{multiplicity of $f$ along $\jc$}. 

In a related way, we define the \emph{proximity function} of $f$ with respect to $\jc$ by
$$
m_{f,\jc}(r):=-\frac{1}{2\pi}\int_{0}^{2\pi}\varphi_\jc\circ f(re^{i\theta}) d\theta,
$$
and the \emph{counting function} of $f$ with respect to $\jc$ by
$$
N_{f,\jc}(r):=\sum_{|t_j|<r}\nu_j\log\frac{r}{|t_j|}.
$$

Let us take a log resolution of $\fc$ with a birational morphism 
$p:\hat{X}\rightarrow X$ such that $p^{-1}(\jc)=\oc_{\hat{X}}(-D)$, let
$\hat{f}$ denote the lift of $f$ to $\hat{X}$ (so that $p\circ \hat{f}=f$), 
and let $\Theta_D$ be the curvature form of $D$.

Now we recall the following formula, which will be very useful in what follows.
\begin{thm}(Jensen formula)
	For $r\geq 1$ we have
	\begin{equation}\label{Jensen}
		\int_{1}^{r}\frac{dt}{t}\int_{B(t)}dd^c\varphi=\frac{1}{2\pi}\int_{0}^{2\pi}\varphi(re^{
			i\theta})d\theta-\frac{1}{2\pi}\int_{0}^{2\pi}\varphi(e^{i\theta})d\theta,
	\end{equation}
	in particular if $\varphi$ is a quasi-plurisubharmonic function, then for $r$ 
	large enough we have
	$$
	\int_{1}^{r}\frac{dt}{t}\int_{B(t)}dd^c\varphi=\frac{1}{2\pi}\int_{0}^{2\pi}\varphi(re^{
		i\theta})d\theta+\oc(1).
	$$
\end{thm}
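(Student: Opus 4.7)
The plan is a two-step argument: first establish the identity for smooth $\varphi$ by direct differentiation of the circular mean, then extend to the quasi-plurisubharmonic case by standard regularization.

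For smooth $\varphi$, I would introduce the circular mean
$$
M(t) := \frac{1}{2\pi}\int_0^{2\pi}\varphi(te^{i\theta})\,d\theta,
$$
and compute its derivative by differentiating under the integral sign. In polar coordinates, $\Delta\varphi = \frac{1}{r}\partial_r(r\partial_r\varphi) + \frac{1}{r^2}\partial_\theta^2\varphi$; integrating over $B(t)$, the angular derivative piece vanishes after integration over the full circle, so one obtains $\int_{B(t)}\Delta\varphi\,dA = 2\pi t\,M'(t)$. Combining this with the one-complex-variable identity $dd^c\varphi = \frac{1}{2\pi}\Delta\varphi\,dA$ (matching the convention under which $dd^c\log|z|=\delta_0$), it follows that $M'(t) = \frac{1}{t}\int_{B(t)}dd^c\varphi$. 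Integrating from $1$ to $r$ then yields precisely the stated identity \eqref{Jensen}.

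To pass to quasi-plurisubharmonic $\varphi$, I would decompose $\varphi = \psi + \chi$ where $\chi$ is smooth (absorbing the lower curvature bound $dd^c\varphi \geq -C\omega$) and $\psi$ is plurisubharmonic; the smooth part is already handled, so it remains to treat $\psi$. Approximate $\psi$ by the standard decreasing family $\psi_\varepsilon$ of smooth plurisubharmonic regularizations (valid on any slightly shrunk disk), for which the smooth case applies. Both sides then converge as $\varepsilon \downarrow 0$: the boundary circular averages by monotone convergence (or Lebesgue's dominated convergence, using that $\psi_\varepsilon \downarrow \psi$ and the circle has finite measure), and the left-hand side by weak-$*$ convergence of the positive Radon measures $dd^c\psi_\varepsilon \to dd^c\psi$ on compacts, together with Fubini in the variable $t$.

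The ``in particular'' clause is then immediate: assuming $\varphi$ is not identically $-\infty$ near the unit circle, the quantity $\frac{1}{2\pi}\int_0^{2\pi}\varphi(e^{i\theta})\,d\theta$ is a finite real constant, independent of $r$, hence absorbed into the $\oc(1)$ term as $r\to\infty$. The only delicate point in the regularization step is ensuring that the circular average of $\psi_\varepsilon$ converges to that of $\psi$ in $L^1$ of the unit circle, which follows from the sub-mean-value property of plurisubharmonic functions together with dominated convergence. I do not expect any genuine obstacle here, as this is a classical computation going back to Lelong; the content of the statement is essentially Stokes' theorem combined with a distributional approximation.
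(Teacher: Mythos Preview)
Your proof is correct and follows the standard route to the Jensen formula; the paper, however, does not prove this statement at all---it is simply quoted as a classical tool (indeed, the surrounding text attributes the immediate consequence to Nevanlinna). So there is nothing to compare against: your argument via differentiation of the circular mean for smooth $\varphi$ plus regularization for the quasi-plurisubharmonic case is exactly the textbook derivation, and the ``in particular'' clause is, as you say, just the observation that the circular average at radius $1$ is a finite constant once $\varphi$ is not identically $-\infty$ there.

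One minor remark on normalization: you should double-check that the convention $dd^c\varphi = \frac{1}{2\pi}\Delta\varphi\,dA$ (equivalently $dd^c\log|z|=\delta_0$) is the one in force in the paper. The paper later writes formulas such as $dd^c\log|f'(\tau)|_\omega^2 = \sum_j \mu_j\delta_{t_j} - \cdots$, which suggests $dd^c\log|z|^2=\delta_0$ rather than $dd^c\log|z|=\delta_0$; under that convention the stated Jensen identity would already be off by a factor of $2$. This is an internal consistency issue in the paper, not a flaw in your argument---your derivation is correct for whichever normalization makes the displayed formula true, and you have identified it correctly.
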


Then the following \emph{First Main Theorem} due to Nevanlinna is an immediate consequence.

\begin{thm}\label{first}As $r\rightarrow \infty$, one has
$$T_{\hat{f},\Theta_D}(r)=N_{f,\jc}(r)+m_{f,\jc}(r)+\oc(1).$$
\end{thm}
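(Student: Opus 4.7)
The plan is to reduce the statement to the Jensen formula (\ref{Jensen}) after resolving the singularities of $\varphi_\jc\circ f$ through the log resolution $p\colon\hat{X}\to X$. Since $p^{-1}(\jc)=\oc_{\hat{X}}(-D)$, I would fix a smooth Hermitian metric $h$ on $\oc_{\hat{X}}(D)$ with curvature form $\Theta_D$ and a canonical section $s_D$ cutting out $D$. If $(g_i)$ are local generators of $\jc$, then locally $p^{\ast}g_i=s_D\cdot h_i$ with $(h_i)$ having no common zero, so
\[
p^{\ast}\varphi_\jc=\log|s_D|_h^2+\eta
\]
for a smooth, hence globally bounded, function $\eta$ on the compact manifold $\hat{X}$. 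Composing with $\hat{f}$ gives $\varphi_\jc\circ f=\log|s_D\circ\hat{f}|_h^2+\eta\circ\hat{f}$, and this identity will be the source of all three terms in the theorem.

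The next step is to apply $dd^c$ and integrate. By the Poincar\'e--Lelong identity on $\hat{X}$, $dd^c\log|s_D|_h^2=[D]-\Theta_D$, hence
\[
dd^c(\varphi_\jc\circ f)=\hat{f}^{\ast}[D]-\hat{f}^{\ast}\Theta_D+dd^c(\eta\circ\hat{f}),
\]
where $\hat{f}^{\ast}[D]=\sum_j\mu_j\delta_{t_j}$ records the orders of vanishing $\mu_j$ of $s_D\circ\hat{f}$ at the discrete set $\hat{f}^{-1}(D)$. Comparing the logarithmic singularity of $\varphi_\jc\circ f$ near $t_j$, namely $\nu_j\log|t-t_j|^2$, with that of $\log|s_D\circ\hat{f}|_h^2$ identifies $\mu_j=\nu_j$. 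I would then apply $\int_{1}^{r}\frac{dt}{t}\int_{B(t)}(\cdot)$ to both sides. The $\hat{f}^{\ast}\Theta_D$ piece gives $T_{\hat{f},\Theta_D}(r)$ by definition; the $\hat{f}^{\ast}[D]$ piece, after switching the order of integration, gives $\sum_{|t_j|<r}\nu_j\log(r/|t_j|)+\oc(1)=N_{f,\jc}(r)+\oc(1)$, the boundary contribution from $|t_j|<1$ being absorbed in $\oc(1)$; and the $dd^c(\eta\circ\hat{f})$ piece is itself $\oc(1)$ by the smooth case of (\ref{Jensen}) applied to the bounded function $\eta\circ\hat{f}$. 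Applying the quasi-plurisubharmonic form of (\ref{Jensen}) directly to $\varphi_\jc\circ f$ evaluates the same double integral as $-m_{f,\jc}(r)+\oc(1)$. Equating the two expressions yields the stated identity.

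The only point that needs genuine care is the use of Jensen's formula for $\varphi_\jc\circ f$, which is not smooth but has logarithmic singularities along $\hat{f}^{-1}(D)$. This is where the hypothesis $f(\cb)\not\subset Z(\jc)$ enters: it ensures $\varphi_\jc\circ f\not\equiv-\infty$, so $\hat{f}^{-1}(D)$ is discrete and only finitely many $t_j$ lie in each $B(r)$. The quasi-psh form of (\ref{Jensen}) already recorded in the paper is precisely the version needed to absorb these logarithmic singularities into the $\oc(1)$, so no additional approximation argument is required and the proof is complete.
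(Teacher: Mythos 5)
Your argument is correct and is precisely the computation the paper is pointing to when it calls this theorem an ``immediate consequence'' of the Jensen formula: pull back $\varphi_\jc$ along the log resolution, split it as $\log|s_D|_h^2$ plus a bounded term, apply Poincar\'e--Lelong to identify the current-valued and singular parts, then integrate using Jensen to convert each piece into $T_{\hat{f},\Theta_D}(r)$, $N_{f,\jc}(r)$, and $m_{f,\jc}(r)$ respectively. The paper states the result without supplying these details, and your write-up fills them in faithfully, including the correct observation that the $|t_j|<1$ contributions and the bounded function $\eta\circ\hat{f}$ are absorbed into $\oc(1)$.
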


Let $\fc$ be a 1-dimension foliation. Then we can take an open covering 
$\{U_\alpha\}_{\alpha\in I}$ such that on each $U_\alpha$ there exists $v_\alpha\in 
H^0(U_\alpha,T_X|_{U_\alpha})$ which generates $\fc$, and such that the 
$v_\alpha$ coincide up to multiplication by nowhere vanishing holomorphic 
functions $\{g_{\alpha\beta}\}$:
$$
v_\alpha=g_{\alpha\beta}v_\beta
$$
if $U_\alpha\cap U_\beta \neq \emptyset$. The functions $\{g_{\alpha\beta}\}$ 
define a cohomology class $H^1(X,\oc^*_X)$ that corresponds to the cotangent 
bundle of $\fc$, denoted here by $T^*_\fc$. Let $\omega$ be a hermitian 
metric on $X$. Then $\omega$ induces a natural singular metric 
$h_s$ on $T_\fc$. Indeed, on 
each $U_\alpha$ the local weight $\varphi_\alpha$ is given by
\begin{equation}\label{singular metric}
\varphi_\alpha=-\log |v_\alpha|^2_\omega=-\log 
\sum_{i,j}a^i_\alpha\overline{a^j_\alpha}\omega_{i\overline{j}},
\end{equation}
where $v_\alpha=\sum_{i=1}^{n}a^i_{\alpha}\frac{\partial}{\partial z^i}$ with 
respect to the coordinate system $(z_1,\ldots,z_n)$ on $U_\alpha$.

We are going to define a coherent sheaf $\jc_{\fc}$ reflecting the behavior of the singularies of $\fc$: on each $U_\alpha$ the generators of $\jc_{\fc}$ 
are precisely the coefficients $(a^i_\alpha)$ of the vector $v_\alpha$ defining 
$\fc$, i.e.
$$
v_\alpha=\sum_{i=1}^{n}a^i_\alpha\frac{\partial}{\partial z^i}. 
$$
If we fix a smooth metric $h$ on $T_\fc$, then there exists a globally defined 
function $\varphi_s$ such that 
$$
h=h_se^{-\varphi_s}
$$  
We know that
\begin{equation}\label{metric}
\varphi_s=\log|v_\alpha|^2_\omega
\end{equation}
modulo a bounded function, and $\varphi_s$ is the characteristic function associated to 
the coherent sheaf~$\jc_{\fc}$.

All the constructions explained above can be generalized to log pairs. We first begin with the following definition.
\begin{dfn}
Let $X$ be a smooth K\"ahler manifold, $D$ a simple normal crossing divisor 
and $\fc$ a foliation by curves defined on $X$. We say that $\fc$ is defined on the log pair $(X,D)$ if each component of $D$ is invariant by $\fc$. For brevity we say that $(X,\fc,D)$ a \emph{K\"ahler 1-foliated triple}.
\end{dfn}

The logarithmic tangent bundle $T_X\langle -\log D\rangle$ with respect to the 
pair $(X,D)$ is the subsheaf of $T_X$ whose local sections are given by
$$
v=\sum_{j=1}^{k}z_jv_j\frac{\d}{\d z_j}+\sum_{i=k+1}^{n}v_i\frac{\d}{\d z_i},
$$
where $z_1z_2\ldots z_k=0$ is the local equation of $D$.

We can take a hermitian metric $\omega_{X,D}$ on $T_X\langle -\log D\rangle$ 
defined as follows：
$$
\omega_{X,D}=\sqrt{-1}\sum_{i,j=1}^{k}\omega_{i\bar{j}}\frac{dz_i\wedge 
d\bar{z}_j}{z_i\bar{z}_j}+2 \text{Re}\sqrt{-1}\sum_{i>k\geq 
j}\omega_{i\bar{j}}\frac{dz_i\wedge 
d\bar{z}_j}{\bar{z}_j}+\sqrt{-1}\sum_{i,j\geq k+1}\omega_{i\bar{j}}dz_i\wedge 
d\bar{z}_j,
$$
where $(\omega_{i\bar{j}})$ is smooth positive definite hermitian matrix. In 
other words, the local model of $\omega_{X,D}$ is given by
$$
\omega_{X,D}\equiv \sqrt{-1}\sum_{i=1}^{k}\frac{dz_i\wedge 
d\bar{z}_i}{|z_i|^2}++\sqrt{-1}\sum_{i\geq k+1}dz_i\wedge d\bar{z}_i.
$$
Assume that the foliation $\fc$ is defined on $(X,D)$. Locally we have
$$
v_\alpha=\sum_{j=1}^{k}z_ja_{\alpha}^j\frac{\d}{\d 
z_j}+\sum_{i=k+1}^{n}a_{\alpha}^i\frac{\d}{\d z_i}
$$
as the generator of $\fc$. Then $\omega_{X,D}$ induces a singular hermitian 
metric $h_{s,D}$ on $T_\fc$ whose local weight is given by
$$
\varphi_{\alpha,D}=-\log |v_\alpha|^2_{\omega_{X,D}}=-\log 
\sum_{i,j}a^i_\alpha\overline{a^j_\alpha}\omega_{i\overline{j}}.
$$
We denote by $\jc_{\fc,D}$ the coherent sheaf defined by the functions 
$(a_{\alpha}^j)$. In general we have
$$
\jc_{\fc}\subset \jc_{\fc,D},
$$
and the inclusion may be strict. If we find a smooth metric 
$h=h_{s,D}e^{-\varphi_{s,D}}$ on $T_\fc$, then it is easy to check that 
$\varphi_{s,D}$ is the characteristic function associated with $\jc_{\fc,D}$. 

We set $\bar{X}_1:=P(T_X\langle -\log D\rangle)$, then $\omega_{X,D}$ induces a 
natural smooth metric $h_1$ on $\oc_{\bar{X}_1}(-1)$.

\subsection{Basic results about Ahlfors currents}

Let $X$ be a compact complex manifold, and let $f$ be an entire curve. Then we can associate a closed positive current of $(n-1,n-1)$ type as follows:
for any smooth 2-form $\eta$, we let
$$
\langle T_{r_k}[f],\eta\rangle:=\frac{T_{f,\eta}(r_k)}{T_{f,\omega}(r_k)},
$$
where $T_{f,\omega}(r):=\int_{1}^{r}\frac{dt}{t}\int_{B(t)}f^*\omega$. Here $B(t)$ is the ball of radius $t$ in $\cb$ and $S(t)$ is its boundary. It is 
known that one can find a suitable sequence of $(r_k)$ that tends to infinity,
such that the weak limit of $T_{r_k}[f]$ is a closed positive current (ref. \cite{Bru99}). It is 
denoted by $T[f]$ and called the \emph{Ahlfors current of $f$}.
Indeed, to ensure that the weak limit of $T_{r_k}[f]$ is closed, we only need $r_k$ to satisfy the following condition
\begin{equation}\label{ahlfors current}
\lim\limits_{r_k\rightarrow \infty}\frac{\text{length}(f(S(r_k)))}{\text{area}(f(B(r_k)))}=0.\tag{*}
\end{equation}
\begin{rem}\label{semi}{\rm
In the definition of the Ahlfors current, it is not indispensable to assume 
$\omega$ to be a K\"ahler form. In fact, it suffices to assume that $\omega$ is 
a semi-positive form satisfying
$$\lim\limits_{r_k\rightarrow 
\infty}\frac{T_{f,\omega}(r_k)}{T_{f,\widetilde{\omega}}(r_k)}>C>0$$
with respect to some K\"ahler form $\widetilde{\omega}$.}
\end{rem}

\begin{thm}\label{nef}
Let $L$ be a big line bundle on a K\"ahler manifold $X$. If $f:\cb\rightarrow X$ is an entire curve on $X$ such that its image is not contained in the augmented base locus $\mathbf{B}_+(L)$ of $L$ (ref.\ \cite{Laz04}), then $\langle T[f],c_1(L)\rangle>0$.
\end{thm}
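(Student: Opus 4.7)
My plan is to decompose $c_{1}(L)$ into an ample part plus an effective part and estimate each piece separately. Since $L$ is big and $f(\mathbb{C})\not\subset\mathbf{B}_{+}(L)$, Kodaira's lemma combined with Nakamaye's description of the augmented base locus produces, for any chosen point $x=f(t_{0})\notin\mathbf{B}_{+}(L)$, an $\mathbb{R}$-linear equivalence $L\equiv_{\mathbb{R}}A+E$ with $A$ an ample $\mathbb{R}$-divisor, $E$ an effective $\mathbb{R}$-divisor, and $x\notin\operatorname{Supp}(E)$. In particular $f(\mathbb{C})\not\subset\operatorname{Supp}(E)$, and cohomologically
$$
\langle T[f],c_{1}(L)\rangle=\langle T[f],c_{1}(A)\rangle+\langle T[f],c_{1}(\mathcal{O}_{X}(E))\rangle,
$$
reducing the theorem to showing the first term is strictly positive and the second non-negative.

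For the ample part, I choose a smooth K\"ahler representative $\omega_{A}\in c_{1}(A)$: by compactness of $X$ one has $\omega_{A}\geq c\,\omega$ pointwise relative to the background K\"ahler form $\omega$ used to construct $T[f]$, and since $\langle T_{r_{k}}[f],\omega\rangle=1$ by normalisation, the weak limit satisfies $\langle T[f],c_{1}(A)\rangle\geq c>0$.

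For the effective part, I pick a smooth hermitian metric $h_{E}$ on $\mathcal{O}_{X}(E)$ with canonical section $s_{E}$ cutting out $E$. The Poincar\'e--Lelong identity yields $[E]-\Theta_{h_{E}}=dd^{c}(-\log|s_{E}|^{2}_{h_{E}})$, and the potential $-\log|s_{E}\circ f|^{2}_{h_{E}}$ is well-defined on $\mathbb{C}$ since $f(\mathbb{C})\not\subset\operatorname{Supp}(E)$. Jensen's formula (\ref{Jensen}), or equivalently the First Main Theorem (Theorem~\ref{first}) applied to $\mathcal{J}=\mathcal{O}_{X}(-E)$, gives
$$
T_{f,\Theta_{h_{E}}}(r)=N_{f,E}(r)+m_{f,E}(r)+\mathcal{O}(1).
$$
Since $|s_{E}|_{h_{E}}$ is bounded from above on $X$, one has $m_{f,E}(r)\geq -C$, while $N_{f,E}(r)\geq 0$ trivially, so $T_{f,\Theta_{h_{E}}}(r)\geq -C'$. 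Dividing by $T_{f,\omega}(r_{k})\to\infty$ along the defining sequence produces $\langle T[f],c_{1}(\mathcal{O}_{X}(E))\rangle\geq 0$, completing the chain.

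The chief obstacle I anticipate is justifying the Kodaira-type decomposition with explicit control on $\operatorname{Supp}(E)$: this rests on the identity $\mathbf{B}_{+}(L)=\bigcap_{L\equiv A+E}\operatorname{Supp}(E)$ due to Nakamaye, together with its extension to the K\"ahler category by Boucksom--Cacciola--Lopez. Once this geometric input is in hand, the remainder of the argument is a routine Nevanlinna-theoretic computation built from the Jensen and First Main Theorem machinery already spelled out in Section~2.1.
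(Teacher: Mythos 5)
Your proof is correct and takes essentially the same route as the paper: decompose $L\equiv A+E$ with $A$ ample and $E$ effective with $f(\mathbb{C})\not\subset\operatorname{Supp}(E)$, then show $\langle T[f],c_1(A)\rangle>0$ by strict positivity of an ample representative against the normalised Ahlfors current and $\langle T[f],c_1(\mathcal{O}_X(E))\rangle\geq0$ via the First Main Theorem (non-negativity of $N_{f,E}$ and $m_{f,E}$ after normalising $h_E$). The only cosmetic difference is that you spell out the Nevanlinna bookkeeping and the Nakamaye/Boucksom--Cacciola--Lopez input a bit more explicitly than the paper does.
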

\begin{proof}
	Since the image of $f$ is not contained in $\mathbf{B}_+(L)$, by the definition of the augmented base locus one can find an effective divisor $E$ whose support does not contain the image of $f$, such that
	$$
	L\equiv A+E,
	$$
	where $A$ is an $\mathbb{Q}$ ample divisor, and ``$\equiv$'' means \emph{numerically equivalent}.
	Then the counting function of $f$ with respect to $E$ is non-negative and one can find a smooth hermitian metric $h_E$ on $E$ such that the proximity function of $f$ with respect to $E$ is also non-negative. Therefore 
	$$
	\langle T[f], \Theta_{h_E}(E)\rangle \geq 0.
	$$
	By the ampleness of $A$, we have
	$$
	\langle T[f], c_1(A)\rangle > 0,
	$$
	and thus 
	$$
	\langle T[f], c_1(L)\rangle=\langle T[f], c_1(A)+c_1(E)\rangle > 0.
	$$
\end{proof}

\begin{dfn}\label{algebraic curve}
	An entire curve $f:\cb\rightarrow X$ is said to be an \emph{algebraic curve} iff $f$ admits a factorization in the form $f=g\circ R$, where $R:\cb\rightarrow \mathbb{P}^1$ is a rational function and $g:\mathbb{P}^1\rightarrow X$ is a rational curve; otherwise $f$ will be called \emph{transcendental}.
\end{dfn}

We have the following criterion for an entire curve to be algebraic (ref.\ \cite{Dem97}):
\begin{thm}\label{dense}
Any entire curve $f:\cb\rightarrow X$ is an algebraic curve if and only if $T_{f,\omega}(r)=\oc(\log r)$. In 
particular, if $f$ is Zariski dense, then
$$
\lim\limits_{r\rightarrow\infty}\frac{T_{f,\omega}(r)}{\log r}=+\infty.
$$
\end{thm}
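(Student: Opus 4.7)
The plan is to establish each direction of the equivalence separately and then deduce the consequence for Zariski dense curves via a convexity argument.

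For the direction \emph{algebraic implies $\oc(\log r)$}, I would compute directly. Writing $f=g\circ R$ with $R:\cb\to\pb^1$ rational of degree $d$ and $g:\pb^1\to X$ a rational parametrization, $f^*\omega=R^*(g^*\omega)$ where $g^*\omega$ is a smooth $(1,1)$-form on $\pb^1$. Since $R$ has degree $d$, the area $\int_{B(t)}R^*(g^*\omega)$ converges to $d\int_{\pb^1}g^*\omega$ as $t\to\infty$, and is in particular bounded. Plugging this into $T_{f,\omega}(r)=\int_1^r\frac{dt}{t}\int_{B(t)}f^*\omega$ yields $T_{f,\omega}(r)=\oc(\log r)$.

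For the converse, I would embed $X\hookrightarrow\pb^N$ using a very ample line bundle; since any two hermitian metrics on a compact manifold are mutually comparable, the hypothesis is equivalent to $T_{f,\omega_{FS}}(r)=\oc(\log r)$ for the Fubini-Study form. Writing $f=[f_0:\cdots:f_N]$ with meromorphic coordinate functions and applying the Jensen formula (\ref{Jensen}) identifies $T_{f,\omega_{FS}}(r)$ with the usual Nevanlinna characteristic of $f$. The classical Nevanlinna characterization of rational functions then forces each ratio $f_i/f_j$ to be rational in $t\in\cb$, so $f$ extends to a rational map $\bar f:\pb^1\to\pb^N$ whose image lies in the closed subvariety $X$. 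Setting $g:=\bar f$ and $R$ the standard inclusion $\cb\hookrightarrow\pb^1$ then gives the required factorization $f=g\circ R$.

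For the ``in particular'' statement, set $A(t):=\int_{B(t)}f^*\omega$ and $\phi(s):=T_{f,\omega}(e^s)$. Because $A$ is nondecreasing, $\phi'(s)=A(e^s)$ is nondecreasing, so $\phi$ is convex on $[0,\infty)$. For such a $\phi$, the secant slope $(\phi(s)-\phi(0))/s$ is monotone, and therefore either bounded (which would force $T_{f,\omega}(r)=\oc(\log r)$) or tending to $+\infty$. A Zariski dense entire curve cannot be algebraic, since the image of an algebraic curve is contained in a rational subcurve of $X$, a proper subvariety (except in the trivial case $X\cong\pb^1$); by the main equivalence the bounded alternative is ruled out, proving $T_{f,\omega}(r)/\log r\to+\infty$.

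The main obstacle is the Nevanlinna characterization of rational meromorphic functions invoked in the converse direction; this is the only nontrivial input and is precisely what \cite{Dem97} supplies. Everything else reduces to a direct pullback computation, a projective embedding, and the convexity dichotomy for $T_{f,\omega}$ viewed as a function of $\log r$.
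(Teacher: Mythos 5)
The paper states this result as a citation to \cite{Dem97} and does not supply a proof of its own, so there is no internal argument to compare against. Your reconstruction is correct and follows the standard Nevanlinna-theoretic route: the forward direction is the direct pullback computation showing the area function $\int_{B(t)}f^*\omega$ stays bounded when $f=g\circ R$ with $R$ rational, so that integrating against $dt/t$ only produces a $\log r$; the converse embeds $X$ in projective space, uses comparability of hermitian metrics to pass to the Fubini--Study form, applies Jensen to identify $T_{f,\omega_{FS}}$ with the classical Cartan/Nevanlinna characteristic, and invokes the classical fact that a meromorphic function of characteristic $\oc(\log r)$ is rational. Your convexity observation for the ``in particular'' clause --- that $s\mapsto T_{f,\omega}(e^s)$ has nondecreasing derivative $A(e^s)$, hence is convex with vanishing value at $s=0$, so that the secant slope is monotone and the $\lim$ (not just a $\liminf$) exists in $[0,+\infty]$ --- is a clean way to obtain the asserted limit, and the Zariski-density hypothesis rules out the bounded alternative except in the trivial case $X\cong\pb^1$, which you correctly flag.

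One caveat worth keeping in mind: the converse direction as you have written it presupposes that $X$ embeds in $\pb^N$, while the surrounding text phrases the theorem for a general compact complex (K\"ahler) manifold. For the paper's purposes this is harmless --- every appeal to Theorem \ref{dense} occurs with $X$ projective --- but a reader wanting the stated generality would have to replace the global embedding by a finite cover by meromorphic coordinate charts, which is essentially the argument that \cite{Dem97} carries out and that you already gesture towards when citing it as the ``only nontrivial input.''
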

From now on, for brevity we write $T_f(r)$ in place of $T_{f,\omega}(r)$, where 
$\omega$ can be some semi-positive $(1,1)$-form satisfying the condition in 
Remark \ref{semi}. 

The following logarithmic derivative lemma will be very useful in our arguments.
\begin{thm}(Logarithmic derivative lemma)
Let $f$ be a meromorphic function on $\mathbb{C}$. Then
\begin{eqnarray}\label{log}
\frac{1}{2\pi}\int_{0}^{2\pi} \log^+ |\frac{f'(re^{i\theta})}{f(re^{i\theta})}|d\theta \leq \oc(\log T_f(r)+\log r).
\end{eqnarray}
\end{thm}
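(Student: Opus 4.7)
The plan is to prove this classical estimate of Nevanlinna via the Poisson--Jensen representation of $\log|f|$, differentiating it to obtain a pointwise bound on $f'/f$, and then integrating over the circle. Concretely, for $R>r$ one can write
\begin{equation}\nonumber
\log f(z)=\frac{1}{2\pi}\int_{0}^{2\pi}\log|f(Re^{i\theta})|\,\frac{Re^{i\theta}+z}{Re^{i\theta}-z}d\theta-\sum_{|a_\nu|<R}\log\frac{R^2-\bar{a}_\nu z}{R(z-a_\nu)}+\sum_{|b_\mu|<R}\log\frac{R^2-\bar{b}_\mu z}{R(z-b_\mu)}
\end{equation}
up to an imaginary constant, where $\{a_\nu\}$, $\{b_\mu\}$ run over the zeros and poles of $f$ in the disc of radius $R$. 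Differentiating this identity in $z$ yields a closed formula for $f'(z)/f(z)$ as a Poisson-type integral plus rational contributions from the divisor of $f$.

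The second step is to majorize $|f'/f|$ pointwise on the circle $|z|=r$. The Poisson kernel term gives a factor of order $R/(R-r)^2$ times $\int_0^{2\pi}|\log|f(Re^{i\theta})||\,d\theta$, which by the Nevanlinna decomposition $\log|f|=\log^+|f|-\log^+|1/f|$ and the First Main Theorem is bounded by $O(T_f(R))$. The divisor sums are estimated by standard Jensen-type inequalities: one uses $\log^+|\sum x_\nu|\le \log^+\sum|x_\nu|\le \log N+\max\log^+|x_\nu|$, together with the fact that the number of zeros and poles inside the disc of radius $R$ is at most $O(T_f(R))$ (by the counting function estimate) and that the mean of $\log^+(1/|z-a_\nu|)$ on $|z|=r$ is uniformly bounded. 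After integrating $\log^+|f'/f|$ over the circle, one therefore obtains a bound of the form
\begin{equation}\nonumber
\frac{1}{2\pi}\int_{0}^{2\pi}\log^+\Bigl|\frac{f'(re^{i\theta})}{f(re^{i\theta})}\Bigr|d\theta\le \oc\Bigl(\log^+ T_f(R)+\log \frac{R}{R-r}\Bigr).
\end{equation}

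The final step is to trade $T_f(R)$ for $T_f(r)$ by a suitable choice of $R$. Here I would invoke the standard Borel growth lemma: for any nondecreasing continuous function $\psi(r)$ of finite order, the inequality $\psi(r+1/\psi(r))\le 2\psi(r)$ holds outside an exceptional set of finite Lebesgue measure. Applied to $\psi=T_f$ with $R=r+1/T_f(r)$, this converts $\log T_f(R)+\log(R/(R-r))$ into $O(\log T_f(r)+\log r)$ up to a set of $r$ that can be ignored.

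The main obstacle in executing this plan is the bookkeeping of the two separate contributions: the ``analytic'' part coming from the Poisson integral, which is relatively clean once one controls $T_f(R)$, and the ``arithmetic'' part coming from the divisor sums, where one must show that despite the potentially many zeros and poles clustering near the circle $|z|=r$, their logarithmic means remain bounded in $r$ on average. This second estimate is the delicate one; it requires a careful use of the Jensen formula and the concavity of $\log^+$, rather than any single slick inequality.
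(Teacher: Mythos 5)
The paper does not prove this theorem; it cites the logarithmic derivative lemma as a classical fact of Nevanlinna theory. Your task was therefore to reconstruct the standard argument, and you have the right skeleton: Poisson--Jensen for $\log f$, differentiation in $z$, pointwise estimation of the Poisson kernel and the divisor sums on $|z|=r$, and a Borel-type growth lemma to pass from $T_f(R)$ to $T_f(r)$. This is exactly the Nevanlinna/Hayman proof. A few small corrections are in order. First, the Borel growth lemma does not require the finite-order hypothesis you attach to it: for \emph{any} continuous nondecreasing $\psi$, the bound $\psi(r+1/\psi(r))\le 2\psi(r)$ holds outside a set of finite linear measure, so the parenthetical about finite order should be dropped. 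Second, the resulting estimate is valid only for $r$ outside a set of finite Lebesgue measure; you mention this in passing, but it deserves emphasis, because the theorem as printed in the paper suppresses the exceptional set entirely. (This is harmless for the paper's purposes, since the Ahlfors current is taken along a subsequence $r_k\to\infty$ that one may choose to avoid any fixed exceptional set of finite measure, but it is a genuine caveat that should appear in a correct statement of the lemma.) Third, your claim that the mean of $\log^+(1/|z-a_\nu|)$ over $|z|=r$ is ``uniformly bounded'' is slightly overstated: by Jensen's formula $\frac{1}{2\pi}\int_0^{2\pi}\log|re^{i\theta}-a|\,d\theta=\log\max(r,|a|)$, and splitting $\log^+$ accordingly leaves a compensating term of size $O(\log R)$ coming from $\log^+|re^{i\theta}-a_\nu|$ with $|a_\nu|<R$. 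This contribution is of course absorbed into the final $O(\log T_f(r)+\log r)$ once $R$ is chosen close to $r$, but the intermediate assertion of uniform boundedness is not quite right as written.
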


Let $(X,V)$ be a smooth directed variety. It is easy to see that there is a canonical lift of $f$ to $X_1:=P(V)$, defined 
by $f_1(t):=(f(t),[f'(t)])$ and satisfying $\pi(f_1)=f$, where $\pi:X_1\rightarrow X$ is the natural projection map. Fix a hermitian metric 
$\omega$ on $X$. It induces a smooth metric $h$ on the line bundle $\oc_{X_1}(1)$. 
Then $0<\delta\ll 1$ , $\omega_1:=\pi^*\omega+\delta 
\Theta_h(\oc_{X_1}(1))$ is a hermitian metric on $X_1$, and we have the following 
lemma:
\begin{lem}\label{high tower}
Assume that $f$ is a transcendental entire curve on $X$. Let $\pi:X_1\rightarrow X$ be the projection map. Then we have
$$
\lim\limits_{r\rightarrow 
\infty}\frac{T_{f_1,\pi^*\omega}(r)}{T_{f_1,\omega_1}(r)}\geq 1.
$$
In particular, we can define the Ahlfors current $T[f_1]$ with respect to 
$\pi^*\omega$ in such a way that $\pi_*T[f_1]=T[f]$.
\end{lem}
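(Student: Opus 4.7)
The plan is to show that $T_{f_1,\omega_1}(r)$ and $T_{f_1,\pi^*\omega}(r)=T_f(r)$ differ only by $o(T_f(r))$, the control coming from the logarithmic derivative lemma (\ref{log}) via McQuillan's tautological inequality. First I would decompose linearly
\[
T_{f_1,\omega_1}(r) = T_{f_1,\pi^*\omega}(r) + \delta\,T_{f_1,\Theta_h(\oc_{X_1}(1))}(r),
\]
and note that $T_{f_1,\pi^*\omega}(r)=T_{f,\omega}(r)=T_f(r)$ since $\pi\circ f_1=f$. The task then reduces to controlling the second summand.

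For that I would invoke the tautological inequality of McQuillan,
\[
T_{f_1,\Theta_h(\oc_{X_1}(1))}(r) \leq \oc(\log T_f(r)+\log r),
\]
valid for $r$ outside an exceptional set $E\subset[1,\infty)$ of finite Lebesgue measure. This is the standard Nevanlinna-theoretic upper bound for the canonical lift against $\oc(1)$ and is deduced from (\ref{log}) via Poincar\'e-Lelong applied to the tautological section $t\mapsto f'(t)$ of $f_1^*\oc_{X_1}(-1)\hookrightarrow f^*T_X$ (whose $h^{-1}$-norm is $\|f'(t)\|_\omega$) together with the Ahlfors-Schwarz lemma; the counting term coming from the zeros of $f'$ is non-negative and may be dropped. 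Because $f$ is transcendental, Theorem \ref{dense} gives $T_f(r)/\log r\to\infty$, so the right-hand side above is $o(T_f(r))$ and hence
\[
\liminf_{r\to\infty,\, r\notin E}\frac{T_{f_1,\pi^*\omega}(r)}{T_{f_1,\omega_1}(r)}\geq 1.
\]

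For the current statement, I would observe that condition (\ref{ahlfors current}) for $f_1$ with respect to the semi-positive form $\pi^*\omega$ coincides verbatim with (\ref{ahlfors current}) for $f$ with respect to $\omega$, since $f_1^*\pi^*\omega=f^*\omega$ enforces equal lengths and areas. Hence any Ahlfors sequence $(r_k)$ for $f$, refined to avoid $E$, works for $f_1$, and Remark \ref{semi} combined with the $\liminf$ bound above legitimises the definition
\[
\langle T[f_1],\eta\rangle := \lim_{k\to\infty}\frac{T_{f_1,\eta}(r_k)}{T_{f_1,\pi^*\omega}(r_k)}
\]
as a closed positive current on $X_1$. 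The pushforward identity is then automatic: for every test form $\eta$ on $X$,
\[
\langle \pi_*T[f_1],\eta\rangle = \langle T[f_1],\pi^*\eta\rangle = \lim_{k\to\infty}\frac{T_{f,\eta}(r_k)}{T_f(r_k)} = \langle T[f],\eta\rangle.
\]
The main technical obstacle is the tautological inequality itself: one must control the boundary integral of $\log\|f'\|^2_\omega$ uniformly via (\ref{log}) and arrange the exceptional set it produces to be compatible with the Ahlfors selection procedure. Once this inequality is in hand, the rest of the argument is formal bookkeeping.
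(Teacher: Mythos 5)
Your proof follows essentially the same route as the paper. Both decompose $\omega_1 = \pi^*\omega + \delta\,\Theta_h(\oc_{X_1}(1))$, reduce to bounding the second characteristic function, derive the tautological bound by applying Lelong--Poincar\'e to the section $f'(t)$ of $f_1^*\oc_{X_1}(-1)$, integrate via Jensen, drop the non-negative counting term $\sum\mu_j\log(r/|t_j|)$, control the boundary integral of $\log|f'|^2_\omega$ by the logarithmic derivative lemma, and then invoke Theorem~\ref{dense} to make the error $o(T_f(r))$; the passage to the Ahlfors current via Remark~\ref{semi} and the pushforward bookkeeping are also identical.

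One small misattribution worth flagging: you invoke the ``Ahlfors--Schwarz lemma'' as a step in deriving the tautological inequality, but that is not what is needed here --- the relevant tool is Jensen's formula (\ref{Jensen}), which converts the $dd^c$-identity from Lelong--Poincar\'e into an identity between characteristic functions; Ahlfors--Schwarz (a curvature comparison statement) plays no role. Aside from that, your version is if anything slightly cleaner than the paper's in two respects: you record the explicit estimate $T_{f_1,\Theta_h(\oc_{X_1}(1))}(r) \le \oc(\log T_f(r)+\log r)$ rather than only a one-sided $\liminf$, and you track the exceptional set of finite Lebesgue measure coming from the logarithmic derivative lemma and observe it must be refined out of the Ahlfors sequence $(r_k)$ --- a point the paper leaves implicit, since its stated version (\ref{log}) of the lemma suppresses the exceptional set. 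The observation that $f_1^*\pi^*\omega = f^*\omega$, so an Ahlfors sequence for $f$ serves verbatim for $f_1$ with respect to $\pi^*\omega$, is also a welcome explicit remark.
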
 
\begin{proof}
	Since $f'(\tau)$ can be seen as a section of the bundle $f_1^*(\oc_{X_1}(-1))$, 
	by the Lelong-Poincar\'e formula we have
\begin{equation}\label{poin}
dd^c\log 
|f'(\tau)|^2_\omega=\sum_{|t_j|<r}\mu_j\delta_{t_j}-f_1^*\Theta_{h^*}(\oc_{X_1}(-1))
\end{equation}
	on $B(r)$, where $\mu_j$ is the vanishing order of $f'(t)$ at $t_j$. Thus we get
	\begin{eqnarray}\label{tower}\nonumber
	\int_{1}^{r}\frac{dt}{t}\int_{B(t)}f_1^*\Theta_{h^*}(\oc_{X_1}(-1))=\sum_{|t_j|<r}
	\mu_j\log\frac{r}{|t_j|}-\int_{1}^{r}\frac{dt}{t}\int_{B(t)}dd^c\log 
	|f'(\tau)|^2_\omega\\ 
	=\sum_{|t_j|<r}\mu_j\log\frac{r}{|t_j|}-\frac{1}{2\pi}\int_{0}^{2\pi}\log|f'(re^
	{i\theta})|^2_\omega 
	d\theta+\frac{1}{2\pi}\int_{0}^{2\pi}\log|f'(e^{i\theta})|^2_\omega d\theta,
	\end{eqnarray}
	where the last equality is a consequence of the Jensen formula (\ref{Jensen}). Let $(\varphi_\alpha)_{\alpha\in J}$ be a partition of unity subcoordinate to the covering $(U_\alpha)_{\alpha\in J}$ of $X$. We can take a finite family of logarithms of 
	global meromorphic fuctions $(\log u_{\alpha j})_{\alpha\in J, 1\leq j\leq n}$ as local coordinates for $U_\alpha$, and by using the logarithmic 
	derivative lemma (\ref{log}) we have
\begin{eqnarray}\nonumber
\frac{1}{2\pi}\int_{0}^{2\pi}\log^+|f'(re^{i\theta})|^2_\omega 
d\theta&=&\sum_{\alpha\in J}\frac{1}{2\pi}\int_{0}^{2\pi}\varphi_\alpha\log^+|f'(re^{i\theta})|^2_\omega 
d\theta\\\nonumber
&\leq& \sum_{\alpha\in J}\sum_{j=1}^{n} C\int_{0}^{2\pi}\log^+|\frac{u_{\alpha j}'(re^{i\theta})}{u_{\alpha j}(re^{i\theta})}|^2d\theta\\\nonumber
&\leq& \oc(\log^+ T_f(r)+\log r),
\end{eqnarray}
where $C$ is some constant. Since $f$ is non-algebraic, from Theorem \ref{dense} we know that
	$$
	\lim\limits_{r\rightarrow\infty}\frac{T_f(r)}{\log r}=+\infty,
	$$
	thus we have
	$$
	\lim\limits_{r\rightarrow\infty}-\frac{1}{T_f(r)}\int_{0}^{2\pi}\log 
	|f'(re^{i\theta})|^2_\omega d\theta\geq0.
	$$
	By (\ref{tower}) we have
\begin{equation}\label{tau formula}
\lim\limits_{r\rightarrow 
	+\infty}\frac{1}{T_f(r)}\int_{1}^{r}\frac{dt}{t}\int_{B(t)}f_1^*\Theta_{h^*}(\oc_{
	X_1}(-1))\geq 	\lim\limits_{r\rightarrow\infty}-\frac{1}{T_f(r)}\int_{0}^{2\pi}\log 
|f'(re^{i\theta})|^2_\omega d\theta\geq0.
\end{equation}
From the definition of $\omega_1$, we have
$$
T_{f_1,\omega_1}(r)=\int_{1}^{r}\frac{dt}{t}\int_{B(t)}f_1^*\omega_1=T_{f_1,
\pi^*\omega}(r)+\delta\int_{1}^{r}\frac{dt}{t}\int_{B(t)}f_1^*\Theta_{h}(\oc_
{X_1}(1)).
$$ 
By $T_{f,\omega}(r)=T_{f_1,\pi^*\omega}(r)$ we get
$$
\lim\limits_{r\rightarrow 
\infty}\frac{T_{f_1,\pi^*\omega}(r)}{T_{f_1,\omega_1}(r)}\geq 1.
$$
By Remark \ref{semi} we can replace $\omega_1$ by $\pi^*\omega$ in the 
definition of Ahlfors current $T[f_1]$, and the equality
$T_{f_1,\pi^*\eta}(r)=T_{f,\eta}(r)$ for any $(1,1)$ form $\eta$ then yields
$$
\pi_*T[f_1]=T[f].
$$
\end{proof}
Similarly we have the following lemma:
\begin{lem}\label{bimero def}
	Let $p:Y\rightarrow X$ be a bimeromorphic morphism between K\"ahler manifolds $X$ and $Y$, obtained by a sequence of blow-ups with smooth centers.
	If $f:\cb\rightarrow X$ is an entire curve whose image is not contained in the critical value of $p$, then we can 
	lift $f$ as $\widetilde{f}:\mathbb{C}\rightarrow Y$ and define the Ahlfors 
	current $T[\widetilde{f}]$ with respect to $p^*\omega$ in such a way that 
	$p_*T[\widetilde{f}]=T[f]$.
\end{lem}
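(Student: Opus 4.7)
The plan is to mimic the strategy used in the proof of Lemma \ref{high tower}: first construct the lift $\widetilde{f}$, then verify that $p^*\omega$ satisfies the semi-positivity condition of Remark \ref{semi}, so that it may stand in for a Kähler form in the definition of the Ahlfors current. Since $p$ is proper and bimeromorphic and $f(\cb)$ is not contained in the critical values, a unique holomorphic lift $\widetilde{f}:\cb\to Y$ with $p\circ\widetilde{f}=f$ exists. Because $p$ is a composition of blow-ups with smooth centres, it is enough to establish the semi-positivity estimate for a single such blow-up and iterate.

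So reduce to the case where $p:Y\to X$ is the blow-up of a smooth centre $Z\subset X$ with exceptional divisor $E$. For every sufficiently small $\varepsilon>0$, the class $p^*[\omega]-\varepsilon[E]$ is Kähler on $Y$, so I can fix a Kähler form $\widetilde{\omega}$ in this class and a smooth metric on $\oc_Y(E)$ with curvature $\Theta_E$ so that
$$
\widetilde{\omega}=p^*\omega-\varepsilon\,\Theta_E+dd^c\psi
$$
for some smooth $\psi$ on $Y$. Integrating $\widetilde{f}^\ast$ of both sides, the Jensen formula (\ref{Jensen}) absorbs the $dd^c\psi$ contribution into an $\oc(1)$ term, and the First Main Theorem (Theorem \ref{first}) applied to the effective divisor $E$, whose support does not contain $\widetilde{f}(\cb)$ by hypothesis, gives
$$
T_{\widetilde{f},\Theta_E}(r)=N_{\widetilde{f},E}(r)+m_{\widetilde{f},E}(r)+\oc(1)\geq -\oc(1),
$$
since both the counting and proximity functions are non-negative. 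Substituting yields
$$
T_{\widetilde{f},\widetilde{\omega}}(r)=T_{f,\omega}(r)-\varepsilon\,T_{\widetilde{f},\Theta_E}(r)+\oc(1)\leq T_{f,\omega}(r)+\oc(1),
$$
and since $T_{\widetilde{f},p^*\omega}(r)=T_{f,\omega}(r)\to\infty$, the ratio $T_{\widetilde{f},p^*\omega}(r)/T_{\widetilde{f},\widetilde{\omega}}(r)$ is bounded below by a positive constant (in fact tends to a value $\geq 1$) as $r\to\infty$. This verifies the hypothesis of Remark \ref{semi}, so the Ahlfors current $T[\widetilde{f}]$ can be defined as a weak limit using $p^*\omega$ along a subsequence $(r_k)$ satisfying condition (\ref{ahlfors current}).

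The push-forward identity $p_*T[\widetilde{f}]=T[f]$ is then immediate from the tautology $T_{\widetilde{f},p^*\eta}(r)=T_{f,\eta}(r)$ for every smooth $(1,1)$-form $\eta$ on $X$, exactly as in the last lines of the proof of Lemma \ref{high tower}. The principal point requiring care is the inductive step over successive blow-ups: at each stage one must check that the lift still avoids the newly created exceptional divisor (which holds because the hypothesis on critical values is preserved under the smooth blow-up construction) and that the $\varepsilon$-parameters for the comparison Kähler class can be chosen compatibly. Both are standard and create no essential obstacle.
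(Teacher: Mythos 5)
Your proof is correct and takes essentially the same route as the paper: compare $p^*\omega$ with a K\"ahler form $\widetilde\omega = p^*\omega - \sum\epsilon_i\Theta_{h_i}(E_i)$ (up to a smooth $dd^c$-exact correction), use that the lift $\widetilde f$ avoids the exceptional divisors so the First Main Theorem gives $\langle T[\widetilde f],\Theta_{h_i}(E_i)\rangle\geq 0$, conclude $\liminf T_{\widetilde f,p^*\omega}/T_{\widetilde f,\widetilde\omega}\geq 1$, and invoke Remark~\ref{semi}. The only cosmetic difference is that you reduce to a single blow-up and iterate, whereas the paper directly writes $\widetilde\omega$ on $Y$ subtracting all exceptional contributions at once (absorbing your $dd^c\psi$ into the choice of metrics $h_i$); both are standard and equivalent.
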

\begin{proof}
	We first fix a K\"ahler metric on $X$. Since $Y$ is obtained by a finite sequence of blow-ups, we can find a K\"ahler metric $\widetilde{\omega}$ on $Y$ defined by 
	$$
	\widetilde{\omega}=p^*\omega-\sum_i \epsilon_i \Theta_{h_i}(E_i),
	$$
	where all $E_i$ are irreducible divisors supported in the exceptional locus of $p$, $h_i$ is some smooth hermitian metric on $E_i$, and each $\epsilon_i$ is some positive real number.
	Since the image of $f$ is not contained in the critical value of $p$, the image of $\widetilde{f}$ is not contained in the exceptional locus of~$p$, and \emph{a fortiori} in any of the $E_i$. We claim that for all $E_i$ we have
\begin{equation}\label{not contain}
\langle T[f], \Theta_{h_i}(E_i)\rangle \geq 0.
\end{equation}
	Indeed, since the image of $\widetilde{f}$ is not contained in $E_i$, the counting function of $f$ with respect to $E_i$ is non-negative, and if we normalize the hermitian metric $h_i$, the proximity function is also non-negative. By Nevanlinna's First Main Theorem, we infer (\ref{not contain}).
\end{proof}
Therefore
$$
\lim\limits_{r\rightarrow 
	\infty}\frac{T_{\widetilde{f},p^*\omega}(r)}{T_{\widetilde{f},\widetilde{\omega}}(r)}\geq 1,
$$
and by Remark \ref{semi} we can define the Ahlfors 
current $T[\widetilde{f}]$ with respect to $p^*\omega$ in such a way that 
$p_*T[\widetilde{f}]=T[f]$.

\begin{rem}\label{sheaf}{\rm
For any coherent ideal sheaf $\jc$ whose zero 
scheme does not contain the image of $f:\cb\rightarrow X$, one can take a log resolution 
$p:\hat{X}\rightarrow X$ of $\jc$ with $p^*\jc=\oc_{\hat
X}(-D)$, and by Lemma \ref{bimero def} one can find a suitable sequence $(r_k)$ such that
$$
\langle T[\hat{f}], \Theta(D)\rangle =\lim\limits_{r_k\rightarrow 
\infty}\frac{T_{\hat{f},\Theta(D)}(r_k)}{T_{\hat{f},p^*\omega}(r_k)}=\lim\limits_
{r_k\rightarrow \infty}\frac{T_{\hat{f},\Theta(D)}(r_k)}{T_{f,\omega}(r_k)},
$$
where $\hat{f}$ is the lift of $f$ to $\hat{X}$ and $\Theta(D)$ is a curvature 
form of $\oc_{\hat{X}}(D)$ with respect to some smooth metric. By Theorem 
\ref{first}, we know that $\langle T[\hat{f}], \Theta(D)\rangle$ does not depend on the log 
resolution of $\jc$. We will denote this intersection number by $T(f,\jc)$.}
\end{rem}

\subsection{Intersection with the tangent bundle}
\begin{thm}(Tautological inequality)\label{tauto}
Let $f:(\cb,T_{\cb}) \rightarrow (X,V)$ be a transcendental entire curve in $X$, 
where $(X,V)$ is a smooth directed variety. We denote by $f_1:\cb\rightarrow P(V)$ the 
lift of $f$. Let $\omega$ be a hermitian metric on $X$, and consider the smooth metric induced by $h$ on the line bundle $\oc_{P(V)}(1)$. Then we have
$$
\langle T[f_1],\oc_{P(V)}(-1)\rangle=\langle T[f_1],\Theta_{h^*}(\oc_{P(V)}(-1))\rangle\geq 0.
$$
\end{thm}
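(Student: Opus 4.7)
The plan is to recycle and adapt the computation already carried out inside Lemma \ref{high tower}, specialized now to the directed setting $(X,V)$. First I would observe that since $f$ is tangent to $V$, the derivative $f'(\tau)\in V_{f(\tau)}$ defines a canonical holomorphic section of the pullback line bundle $f_1^*\oc_{P(V)}(-1)$: indeed, on each fibre $f_1(\tau)$ is the line generated by $f'(\tau)$, and $|f'(\tau)|_\omega$ is the pointwise norm in the metric induced by $\omega$ on $\oc_{P(V)}(-1)$. The isolated zeros of this section occur with multiplicities $\mu_j$ at the ramification points $t_j$ of $f_1$.

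Next I would apply the Lelong--Poincar\'e formula together with Jensen's formula (\ref{Jensen}) on $B(r)$, exactly as in (\ref{poin})--(\ref{tower}), to get
$$
\int_{1}^{r}\frac{dt}{t}\int_{B(t)}f_1^*\Theta_{h^*}(\oc_{P(V)}(-1))=\sum_{|t_j|<r}\mu_j\log\frac{r}{|t_j|}-\frac{1}{2\pi}\int_{0}^{2\pi}\log|f'(re^{i\theta})|^2_\omega\,d\theta+\oc(1).
$$
The counting term on the right is non-negative. For the remaining integral, I would decompose $\log|f'|^2_\omega=\log^+|f'|^2_\omega-\log^-|f'|^2_\omega$; since $\log^-\geq 0$, it suffices to bound $\log^+|f'|^2_\omega$ from above. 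Using a partition of unity subordinate to a finite cover of $X$ by coordinate charts with local meromorphic coordinate functions, the logarithmic derivative lemma (\ref{log}) gives
$$
\frac{1}{2\pi}\int_{0}^{2\pi}\log^+|f'(re^{i\theta})|^2_\omega\,d\theta=\oc(\log^+T_f(r)+\log r).
$$

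Finally, I would divide by $T_{f_1,\pi^*\omega}(r)=T_f(r)$ and let $r\to\infty$ along a sequence $(r_k)$ realizing the Ahlfors current $T[f_1]$ (defined with respect to $\pi^*\omega$ as in Lemma \ref{high tower}). Because $f$ is transcendental, Theorem \ref{dense} yields $T_f(r)/\log r\to+\infty$, and hence also $\log T_f(r)/T_f(r)\to 0$, so the error term becomes negligible; the counting term only contributes non-negatively. This yields
$$
\langle T[f_1],\oc_{P(V)}(-1)\rangle=\langle T[f_1],\Theta_{h^*}(\oc_{P(V)}(-1))\rangle\geq 0,
$$
the first equality being simply the choice of a curvature representative of the Chern class. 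I do not expect any real obstacle here, since the argument is essentially the one already present in the proof of Lemma \ref{high tower}; the only slightly delicate point is the application of the logarithmic derivative lemma in the directed setting, where one must verify that $|f'|^2_\omega$ can be uniformly controlled by logarithmic derivatives of local coordinate functions, which is why the tangency condition $f'\in V$ is used implicitly through the canonical lift $f_1$.
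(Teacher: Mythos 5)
Your proof is correct and is essentially the paper's own argument: the published proof simply invokes equation (\ref{tau formula}), which is established inside the proof of Lemma \ref{high tower} by precisely the Lelong--Poincar\'e/Jensen/logarithmic-derivative-lemma computation you reproduce here. (One small misattribution in your closing remark: tangency $f'(\tau)\in V_{f(\tau)}$ is only what makes the lift $f_1:\cb\to P(V)$ well defined; the logarithmic derivative estimate for $|f'|^2_\omega$ via a partition of unity and local coordinates $\log u_{\alpha j}$ is valid for any entire curve in $X$ and does not itself depend on tangency.)
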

\begin{proof}
By (\ref{tau formula}) above we have
$$
\lim\limits_{r\rightarrow 
+\infty}\frac{1}{T_{f,\omega}(r)}\int_{1}^{r}\frac{dt}{t}\int_{B(t)}f_1^*\Theta_{h^*}(\oc_{
P(V)}(-1))\geq 0.
$$
By Lemma \ref{high tower} we can take $\pi^*\omega$ as the semi-positive form used in the definition of the Ahlfors current of $T[f_1]$, where $\pi:P(V)\rightarrow X$ is 
the natural projection. The equality $T_{f,\omega}(r)=T_{f_1,\pi^*\omega}(r)$ then implies
$$\langle T[f_1],\oc_{P(V)}(-1)\rangle= \lim\limits_{r\rightarrow 
+\infty}\frac{1}{T_{f_1,\pi^*\omega}(r)}\int_{1}^{r}\frac{dt}{t}\int_{B(t)}f_1^*\Theta_{h^*}(\oc_{
P(V)}(-1))\geq 0.$$

\end{proof}

\begin{rem}\label{Euler}{\rm
Recall the following well known formula: if $C\subset X$ is a smooth algebraic 
curve and $\widetilde{C}\subset P(T_X)$ is its lift to $P(T_X)$, then 
$c_1(\oc_{P(T_X)}(-1))\cdot [\widetilde{C}]=\chi (C)$. Thus the above 
tautological inequality intuitively means that the ``Euler 
characteristic'' of the transcendental curve $f:\mathbb{C}\rightarrow X$
is non-negative.}
\end{rem}
\begin{thm}\label{formula tangent}
Let $X$ be a K\"ahler manifold endowed with a 1-dimensional foliation $\fc$, and
$f:\cb\rightarrow X$ be a transcendental entire curve tangent to $\fc$ such that its image is not contained in $\text{Sing}(\fc)$. Then we have
\begin{equation}\label{formula 1}
\langle T[f], c_1(T_\fc)\rangle+T(f,\jc_{\fc})=\langle 
T[f_1],\Theta_g(\oc_{X_1}(-1))\rangle\geq0,
\end{equation}
where $g$ is the smooth metric on $\oc_{X_1}(-1)$ induced by $\omega$.
\end{thm}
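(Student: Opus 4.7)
The plan is to compare two expressions for $\log |f'(t)|^2_\omega$: one treating $f'$ as a holomorphic section of the tautological line bundle $f_1^*\oc_{X_1}(-1)$ (as in the proof of the tautological inequality), and one using the tangency of $f$ to $\fc$ to factor $f'$ through $f^*T_\fc$. Matching the two at the level of Nevanlinna functions will produce the stated identity, and the non-negativity will then be immediate from Theorem \ref{tauto}. For the second expression: since $f$ is tangent to $\fc$ and its image is not contained in $\text{Sing}(\fc)$, in each chart $U_\alpha$ with foliation generator $v_\alpha$ we may write $f'(t)=\widetilde{f'}(t)\cdot v_\alpha(f(t))$ for a (possibly meromorphic) section $\widetilde{f'}$ of $f^*T_\fc$. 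Combining the local formula (\ref{singular metric}) with the decomposition $h=h_s e^{-\varphi_s}$, where $\varphi_s$ is the characteristic function associated to $\jc_\fc$, gives the pointwise identity
$$\log |f'(t)|^2_\omega = \log |\widetilde{f'}(t)|^2_h + \varphi_s(f(t)) + O(1).$$

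Next I would integrate each side over $\partial B(r)$ and apply the Jensen formula. On the left, Lelong--Poincar\'e for $f'$ as a holomorphic section of $f_1^*\oc_{X_1}(-1)$ equipped with the metric $g$ induced by $\omega$ yields
$$\frac{1}{2\pi}\int_0^{2\pi}\log|f'(re^{i\theta})|^2_\omega\,d\theta = N_{Z(f')}(r) - T_{f_1,\Theta_g(\oc_{X_1}(-1))}(r) + O(1).$$
Lelong--Poincar\'e applied to the meromorphic section $\widetilde{f'}$ of the line bundle $f^*T_\fc$ gives
$$\frac{1}{2\pi}\int_0^{2\pi}\log|\widetilde{f'}(re^{i\theta})|^2_h\,d\theta = \bigl(N_{Z(f')}(r)-N_{f,\jc_\fc}(r)\bigr) - T_{f,\Theta_h(T_\fc)}(r) + O(1),$$
where the signed counting function reflects that the divisor of $\widetilde{f'}$ differs from that of $f'$ by $-\sum_j \nu_j [t_j]$ at the isolated points $t_j\in f^{-1}(\text{Sing}(\fc))$ where $v_\alpha\circ f$ vanishes with multiplicity $\nu_j$. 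By the very definition of the proximity function, the third term integrates as $\frac{1}{2\pi}\int_0^{2\pi}\varphi_s(f(re^{i\theta}))\,d\theta = -m_{f,\jc_\fc}(r)$.

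Substituting these three integrals back and cancelling $N_{Z(f')}(r)$, one obtains
$$T_{f_1,\Theta_g(\oc_{X_1}(-1))}(r) = T_{f,\Theta_h(T_\fc)}(r) + \bigl(N_{f,\jc_\fc}(r)+m_{f,\jc_\fc}(r)\bigr) + O(1),$$
and the bracketed term equals $T_{\hat f,\Theta_D}(r)+O(1)$ by Theorem \ref{first}. Dividing by $T_f(r)$ and passing to the limit along the Ahlfors sequence $r_k\to\infty$ (using that $T_{f_1,\pi^*\omega}(r)=T_f(r)$ after Lemma \ref{high tower}) yields
$$\langle T[f_1],\Theta_g(\oc_{X_1}(-1))\rangle = \langle T[f],c_1(T_\fc)\rangle + T(f,\jc_\fc),$$
while the non-negativity is precisely Theorem \ref{tauto}. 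The main delicate point is the bookkeeping in the second step: one must justify Lelong--Poincar\'e for the meromorphic section $\widetilde{f'}$ and correctly account for the multiplicity at each point of $f^{-1}(\text{Sing}(\fc))$. A clean way to handle this is to pass to a log resolution $p\colon\hat X\to X$ of $\jc_\fc$, where $p^*v_\alpha$ factors as the canonical section of $\oc_{\hat X}(-D)$ times a vector field non-vanishing at the generic point of $D$, and run the identities on $\hat X$ so that no meromorphic section appears.
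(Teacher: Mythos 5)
Your proof is correct and follows essentially the same route as the paper: factor $f'$ through the foliation generator $v_\alpha$, compare the singular metric $h_s$ induced by $\omega$ with a smooth metric $h = h_s e^{-\varphi_s}$ via the characteristic function $\varphi_s$ of $\jc_\fc$, apply Lelong--Poincar\'e/Jensen to both $f'\in H^0(\cb,f_1^*\oc_{X_1}(-1))$ and $\widetilde{f'}$ (the paper's $\lambda_\alpha$), use $\mu_j-\eta_j=\nu_j$ to cancel the common counting term, and finish with the First Main Theorem and Theorem~\ref{tauto}. The only cosmetic difference is that you split $\log|f'|^2_\omega$ into three separately integrated pieces, whereas the paper combines the Lelong--Poincar\'e identities at the level of currents before applying Jensen once; your closing remark about working on a log resolution to avoid meromorphic sections is exactly what Remark~\ref{sheaf} is for.
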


\begin{proof}
Let $(U_\alpha)$ be a partition of unit of $X$, and let us denote $\Omega_\alpha=f^{-1}(U_\alpha)$. Then we have
\begin{equation}\label{mero tan}
f'(\tau)=\lambda_\alpha(\tau) v_\alpha|_{f(\tau)}
\end{equation}
for some holomorphic function $\lambda_\alpha(\tau)$
on $\Omega_\alpha \setminus f^{-1}(\text{Sing}(\fc))$ (notice that $f^{-1}(\text{Sing}(\fc))\cap \Omega_\alpha$ is a set of isolated 
points since its image is not contained in $\text{Sing}(\fc)$). 
We denote by $\eta_j$ the multiplicities of $\lambda_{\alpha}(\tau)$ at $t_j$ (they may be 
negative if $f(t_j)\in \text{Sing}(\fc)$, however we have $\eta_j+\nu_j\geq 0$), where 
$\nu_j$ is the multiplicity of $f$ along $\jc_{\fc}$, i.e.
$$
dd^c\log |v_\alpha|^2_\omega\circ f(\tau)|_{B(r)}=\sum_{|t_j|< r} \nu_j\log|\tau-t_j|^2+\oc(1).
$$
Since $v_\alpha=g_{\alpha\beta}v_{\beta}$, then if $t_j\in \Omega_\alpha\cap \Omega_{\beta}$, $\lambda_\alpha$ and $\lambda_\beta$ have the same multiplicity at $t_j$, and thus $\eta_j$ does not depend on the partition on unity.
By the Lelong-Poincar\'e formula and (\ref{mero tan}) we have
\begin{eqnarray}\nonumber
dd^c\log|f'(\tau)|^2_\omega&=&\sum_{|t_j|< r}  \eta_j\delta_{t_j}+dd^c\log |v_\alpha|^2_\omega\circ f(\tau)\\\label{lelong}
&=&\sum_{|t_j|< r}  \eta_j\delta_{t_j}-f^*\Theta_{h_s},
\end{eqnarray}
where $h_s$ is the singular metric on $T_{\fc}$ whose local weight is $\varphi_\alpha=-\log |v_\alpha|^2_\omega$ (see (\ref{singular metric}) above). If we fix a smooth metric $h$ on $T_\fc$, then there exists a globally defined 
function $\varphi_s$ such that 
$$
h=h_se^{-\varphi_s},
$$  
and $\varphi_s$ is the characteristic function associated to 
the coherent sheaf $\jc_{\fc}$.
By (\ref{lelong}) we have
$$
dd^c\log|f'(\tau)|^2_\omega-f^*dd^c\varphi_s=\sum_{|t_j|< r}  \eta_j\delta_{t_j}-f^*\Theta_{h}(T_\fc).
$$
on $B(r)$. From (\ref{poin}) we know that
$$
\sum_{|t_j|<r}\mu_j\delta_{t_j}-f_1^*\Theta_{g}(\oc_{X_1}(-1))-f^*dd^c\varphi_s=\sum_{|t_j|< r}  \eta_j\delta_{t_j}-f^*\Theta_{h}(T_\fc),
$$
where $g$ is the smooth metric on $\oc_{X_1}(-1)$ induced by $\omega$, and $\mu_j$ is the multiplicity of $f'(\tau)$ at $t_j$. Therefore, as $\mu_j-\eta_j=\nu_j$, we have
$$
f^*\Theta_{h}(T_\fc)=-\sum_{|t_j|<r}\nu_j\delta_{t_j}+f_1^*\Theta_{g}(\oc_{X_1}(-1))+f^*dd^c\varphi_s
$$
on each $B(r)$. Then we have
\begin{eqnarray}\nonumber\label{sum}
\langle T_r[f], 
\Theta_h(T_\fc)\rangle&:=&\frac{1}{T_f(r)}\int_{1}^{r}\frac{dt}{t}\int_{
	B(t)} f^*\Theta_{h}(T_\fc)\\\nonumber
&=&\frac{1}{T_f(r)}\int_{1}^{r}\frac{dt}{t}\int_{
	B(t)}f_1^*\Theta_{g}(\oc_{X_1}(-1))
-\frac{1}{T_f(r)}\sum_{|t_j|<r}\nu_j\log\frac{r}{|t_j|}\\\nonumber
&+&\frac{1}{T_f(r)}
\frac{1}{2\pi}\int_{0}^{2\pi}\varphi_s\circ f(re^{i\theta}) d\theta-\frac{1}{T_f(r)}
\frac{1}{2\pi}\int_{0}^{2\pi}\varphi_s\circ f(e^{i\theta}) d\theta\nonumber\\
&=&\frac{1}{T_f(r)}\int_{1}^{r}\frac{dt}{t}\int_{
	B(t)}f_1^*\Theta_{g}(\oc_{X_1}(-1))\nonumber\\
&-&\frac{1}{T_f(r)}N_{f,\jc_\fc}(r)-\frac{1}{T_f(r)}m_{f,\jc_\fc}(r),
\end{eqnarray}
where  $N_{f,\jc_\fc}(r)$ and $m_{f,\jc_\fc}(r)$ are the counting and proximity function of $f$ with respect to $\jc_\fc$, and the second equality above is a consequence of the Jensen formula. 
Since $T[f]$ is the weak limit of the positive current $T_{r_k}[f]$ for some sequence $r_k\rightarrow \infty$, we have
$$
\langle T[f], \Theta_h(T_\fc)\rangle=\lim_{r_k\rightarrow+\infty} \langle T_{r_k}[f], 
\Theta_h(T_\fc)\rangle.
$$
From Theorem \ref{first}, Remark \ref{sheaf} and Lemma \ref{high tower} we conclude that 
$$
\langle T[f], \Theta_h(T_\fc)\rangle+T(f,\jc_\fc)=\langle 
T[f_1],\Theta_g(\oc_{X_1}(-1))\rangle\geq0.
$$
\end{proof}

In fact, $K_\fc\otimes\jc_{\fc}$ is the canonical sheaf $\mathcal{K}_\fc$ of 
$\fc$ defined in \cite{Dem14}, by using \emph{admissible metric}. We recall the following definition in \cite{Dem14}.
\begin{dfn}\label{canonical sheaf}
	We say that the canonical sheaf $\mathcal{K}_\fc$ is ``\emph{big}" if there exists some birational morphism $\nu_\alpha:(X_\alpha,\fc_\alpha)\rightarrow(X,\fc)$ of $(X,\fc)$ such that the invertible sheaf $\mu_\alpha^*\mathcal{K}_{\fc_\alpha}$ is big in the usual sense for some log resolution $\mu_\alpha:Y_\alpha\rightarrow X_\alpha$ of $\jc_{\fc_\alpha}$. The \emph{base locus} $Bs(\fc)$ of $\fc$ is defined to be
	$$
	Bs(\fc):=\bigcap_{\alpha}\nu_\alpha\circ\mu_\alpha \mathbf{B}_+(\mu_\alpha^*\mathcal{K}_{\fc_\alpha}),
	$$ 
	where $(X_\alpha,\fc_\alpha)$ varies among all the birational models with $\mu_\alpha^*\mathcal{K}_{\fc_\alpha}$ big.
\end{dfn}

The above Theorem \ref{formula tangent} then gives another proof of the following  \emph{Generalized Green-Griffiths conjecture} for rank $1$ foliations formulated in \cite{Dem12}.  Moreover we can specify more precisely the subvariety containing the images of all transcendental curves tangent to the foliation. The theorem is as follows:
\begin{cor}
	Let $(X,\fc)$ be a projective 1-foliated manifold and assume that $\mathcal{K}_\fc$ is big. If $f:\cb\rightarrow X$ is a transcendental entire curve tangent to $\fc$, its image must be contained in $\text{Sing}(\fc)\cup Bs(\fc)$. In particular, any entire curve tangent to $\fc$ must be algebraically degenerate.
\end{cor}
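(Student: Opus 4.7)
The plan is to argue by contradiction against Theorem~\ref{nef}: if the image of a transcendental $f$ avoided $\text{Sing}(\fc)\cup Bs(\fc)$, one would find a birational model on which the Ahlfors current of the lifted curve pairs non-positively with a big line bundle whose augmented base locus does not contain the image, contradicting Theorem~\ref{nef}. Algebraic entire curves in the sense of Definition~\ref{algebraic curve} are handled trivially: their image is a rational curve, hence algebraically degenerate, so one may restrict to the transcendental case from the start.

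Assume for contradiction that the image of $f$ avoids $\text{Sing}(\fc)\cup Bs(\fc)$. By Definition~\ref{canonical sheaf}, the bigness of $\mathcal{K}_\fc$ supplies a birational morphism $\nu:(X',\fc')\to(X,\fc)$ and a log resolution $\mu:Y\to X'$ of $\jc_{\fc'}$, with $\mu^*\jc_{\fc'}=\oc_Y(-D)$, such that $L:=\mu^*\mathcal{K}_{\fc'}$ is big on $Y$; since the image of $f$ is not in $Bs(\fc)$, one may (refining the model if necessary) arrange that the image of $f$ also avoids $\nu\circ\mu(\mathbf{B}_+(L))$ and the critical values of $\nu$ and $\mu$. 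Lemma~\ref{bimero def} then produces lifts $f':\cb\to X'$ and $\tilde f:\cb\to Y$ for which the Ahlfors currents are compatible under push-forward.

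Next I would apply Theorem~\ref{formula tangent} to $(X',\fc')$. Since the image of $f'$ still avoids $\text{Sing}(\fc')$ by the choice of model, and using $c_1(T_{\fc'})=-c_1(K_{\fc'})$, the theorem reads
\[
-\langle T[f'],c_1(K_{\fc'})\rangle+T(f',\jc_{\fc'})\geq 0.
\]
Remark~\ref{sheaf} identifies $T(f',\jc_{\fc'})$ with $\langle T[\tilde f],c_1(\oc_Y(D))\rangle$, and the projection formula gives $\langle T[f'],c_1(K_{\fc'})\rangle=\langle T[\tilde f],\mu^*c_1(K_{\fc'})\rangle$. Combining these with the identity $\mu^*\mathcal{K}_{\fc'}=\mu^*K_{\fc'}\otimes\oc_Y(-D)$ rearranges the inequality into
\[
\langle T[\tilde f],c_1(L)\rangle\leq 0.
\]
But $L$ is big on $Y$ and the image of $\tilde f$ is not contained in $\mathbf{B}_+(L)$, so Theorem~\ref{nef} forces $\langle T[\tilde f],c_1(L)\rangle>0$, a contradiction. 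Hence the image of $f$ lies in the proper Zariski closed subset $\text{Sing}(\fc)\cup Bs(\fc)$ (properness of $\mathbf{B}_+(L)$ for big $L$ being standard), which immediately yields algebraic degeneracy.

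The only nontrivial aspect of this plan is bookkeeping across the two birational maps: one must check that $\tilde f$ meets neither the critical loci of $\mu$ and $\nu$ nor $\text{Sing}(\fc')$, so that Lemma~\ref{bimero def} and Theorem~\ref{formula tangent} really apply. Each of these exceptional loci either pushes forward into $\text{Sing}(\fc)\cup Bs(\fc)$ or can be absorbed into $\mathbf{B}_+(L)$ after a further refinement of the birational model, so the contradiction hypothesis provides exactly the clearance required; fleshing this out rigorously is the main technical point.
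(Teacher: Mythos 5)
Your proposal is correct and follows essentially the same route as the paper's own proof: argue by contradiction, apply Theorem~\ref{formula tangent} on the birational model from Definition~\ref{canonical sheaf}, use Remark~\ref{sheaf} to identify $T(f',\jc_{\fc'})$ with $\langle T[\tilde f],D\rangle$, and derive a contradiction with Theorem~\ref{nef} for the big line bundle $\mu^*\mathcal{K}_{\fc'}$. The only cosmetic difference is the order in which the two inequalities are combined; your attention to the birational-bookkeeping (critical loci, lifts) merely makes explicit what the paper leaves implicit.
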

\begin{proof}
	Assume that the image of $f$ is not contained in $\text{Sing}(\fc)\cup Bs(\fc)$. We proceed by contradiction. By Definition \ref{canonical sheaf}, there exists a birational morphism $\nu_\alpha:(X_\alpha,\fc_\alpha)\rightarrow(X,\fc)$ such that the invertible sheaf $\mu_\alpha^*\mathcal{K}_{\fc_\alpha}$ is big in the usual sense, for some log resolution $\mu_\alpha:Y_\alpha\rightarrow X_\alpha$ of $\jc_{\fc_\alpha}$, and such that the image of $f_\alpha$ is not contained in $\mathbf{B}_+(\mu_\alpha^*\mathcal{K}_{\fc_\alpha})$, where $f_\alpha$ is the lift of $f$ to $Y_\alpha$. We denote by $\widetilde{f}_\alpha$ the lift of $f$ to $X_\alpha$. By Theorem \ref{nef} we have
	$$
	\langle T[f_\alpha], c_1(\mu_\alpha^*\mathcal{K}_{\fc_\alpha}) \rangle>0.
	$$
	By Remark \ref{sheaf} and the fact that ${\nu_\alpha}_*T[f_\alpha]=T[\widetilde{f}_\alpha]\rangle $, we get
	$$
	\langle T[\widetilde{f}_\alpha], K_{\fc_\alpha}\rangle- T(\widetilde{f}_\alpha,\jc_{\fc_\alpha})=\langle T[f_\alpha],\mu_\alpha^*\mathcal{K}_{\fc_\alpha}\rangle>0.
	$$
	However, since $f$ is transcendental, by Theorem \ref{formula tangent} we infer
	$$
	\langle T[\widetilde{f}_\alpha], T_{\fc_\alpha}\rangle+ T(\widetilde{f}_\alpha,\jc_{\fc_\alpha})\geq 0,
	$$
	and the contradiction is obtained by observing that $c_1(K_{\fc_\alpha})=-c_1(T_{\fc_\alpha})$.
\end{proof}

\begin{rem}{\rm
	In \cite{Den15} we have generalized the above theorem to any singular directed variety $(X,V)$ (without assuming $V$ to be involutive), by applying the Ahlfors-Schwarz Lemma. In the proof, the canonical sheaf plays a crucial role (and it arises in a natural way).}
\end{rem}

By a result due to Takayama (ref. \cite{Tak08}) we know that on a projective manifold $X$ of general type, every irreducible component of $\mathbf{B}_+(K_X)$ is uniruled. It is natural to ask the following analogous question:

\begin{problem}
	Let $(X,\fc)$ be a projective 1-foliated pair with $\mathcal{K}_\fc$ big. Is every irreducible component of $Bs(\fc)$ uniruled?
\end{problem}

From (\ref{formula 1}), we see that the positivity of $T[f]\cdot T_\fc$ is controlled by the error term $T(f,\jc_{\fc})$. This gives us the advantage that we only need to compute the coherent ideal sheaf $\jc_{\fc}$ instead of knowing the exact form of $\fc$ at the singularities.

\begin{rem}{\rm
If $X$ is a complex surface endowed with a foliation $\fc$, and $C$ is an 
algebraic curve, then by Proposition 2.3 in \cite{Bru04} we have the formula:
\begin{equation}
C\cdot T_\fc+Z(\fc,C)=\chi(C),
\end{equation}
where $Z(\fc,C)$ is the multiplicity of the singularities of $\fc$ along the 
curve $C$. By Remark \ref{Euler}, we know that $\langle T[f_1],\oc_{X_1}(-1)\rangle$ can be 
seen as the ``Euler characteristic" of $f$. Thus (\ref{formula 1}) is a 
transcendental version of the above formula, and we have the following 
metamathematical correspondences between any algebraic leaf $C$ and 
any transcendental leaf $f:\mathbb{C}\rightarrow X$:
\begin{eqnarray}\nonumber
Z(\fc,C)\sim T(f,\jc_{\fc}),\\ \nonumber
C\cdot T_\fc\sim  \langle T[f],c_1(T_\fc)\rangle,\\ \nonumber
\chi(C)\sim \langle T[f_1],\oc_{X_1}(-1)\rangle.
\end{eqnarray}
\vskip-\baselineskip\vskip-\belowdisplayskip\hfill\qed
\vskip\belowdisplayskip}
\end{rem}

We also need the following logarithmic version of Theorem \ref{formula tangent}:
\begin{thm}\label{log version}
Let $(X,\fc,D)$ be a K\"ahler 1-foliated triple, and let $f:\cb\rightarrow X$ be a transcendental entire curve  tangent to 
$\fc$ such that its image is not contained in $\text{Sing}(\fc)\cup |D|$, where $|D|$ is the support of $D$. Then we have
$$
\langle T[f], c_1(T_\fc)\rangle+T(f,\jc_{\fc,D})=\langle T[\bar{f}_1], 
\oc_{\bar{X}_1}(-1)\rangle \geq -\liminf\limits_{r\rightarrow 
\infty}\frac{N_{f, D}^{(1)}(r)}{T_f(r)}=:-N^{(1)}(f,D),
$$
where $\bar{f}_1$ is the lift of $f$ on $\bar{X}_1:=P(T_X\langle -\log D\rangle)$, and 
$N_{f,D}^{(1)}(r)$ is the \emph{truncated counting function} of $f$ with respect to $D$
defined by
$$
N_{f,D}^{(1)}(r):=\sum_{|t_j|<r,f(t_j)\in D}\log\frac{r}{|t_j|}.
$$
\end{thm}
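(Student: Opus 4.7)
The plan is to adapt the arguments of Theorem \ref{formula tangent} and Lemma \ref{high tower} to the logarithmic setting, replacing the hermitian metric $\omega$ by $\omega_{X,D}$ and the projective bundle $X_1$ by $\bar{X}_1:=P(T_X\langle-\log D\rangle)$. One useful preliminary observation is that since each component of $D$ is $\fc$-invariant, a leaf of $\fc$ through a smooth point of $\fc$ lying in $D$ stays inside $D$; consequently any $t_j\in f^{-1}(|D|)$ must satisfy $f(t_j)\in \text{Sing}(\fc)$, so $f^{-1}(|D|)$ is a discrete subset of $\cb$.

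For the equality, I would repeat the argument of Theorem \ref{formula tangent} verbatim in the logarithmic frame $\{z_j\tfrac{\d}{\d z_j}\}\cup\{\tfrac{\d}{\d z_i}\}$. Writing $f'(\tau)=\lambda_\alpha(\tau)v_\alpha|_{f(\tau)}$ with respect to this frame and applying the Lelong--Poincar\'e formula to $\log|f'(\tau)|^2_{\omega_{X,D}}$ in two ways gives
\[
\sum_j\eta_j\delta_{t_j}-f^*\Theta_{h_{s,D}}(T_\fc)=\sum_j\mu'_j\delta_{t_j}-\bar{f}_1^*\Theta_{g_D}(\oc_{\bar{X}_1}(-1)),
\]
where $h_{s,D}$ has local weight $-\log|v_\alpha|^2_{\omega_{X,D}}$, $\eta_j$ is the order of $\lambda_\alpha$ at $t_j$ (possibly negative), and $\mu'_j$ is the order of $f'(\tau)$ at $t_j$ in the logarithmic frame. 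Using $h=h_{s,D}e^{-\varphi_{s,D}}$ with $\varphi_{s,D}$ the characteristic function of $\jc_{\fc,D}$, together with the identity $\mu'_j-\eta_j=\nu_j$ (the multiplicity of $f$ along $\jc_{\fc,D}$), this rearranges to
\[
f^*\Theta_h(T_\fc)=-\sum_j\nu_j\delta_{t_j}+f^*dd^c\varphi_{s,D}+\bar{f}_1^*\Theta_{g_D}(\oc_{\bar{X}_1}(-1)).
\]
Applying $\int_1^r\tfrac{dt}{t}\int_{B(t)}$, invoking Jensen's formula on $\varphi_{s,D}\circ f$, dividing by $T_f(r)$, and passing to the Ahlfors limit (together with Theorem \ref{first} and Remark \ref{sheaf} to identify $N_{f,\jc_{\fc,D}}+m_{f,\jc_{\fc,D}}$ with $T(f,\jc_{\fc,D})\cdot T_f(r)+o(T_f(r))$) yields the equality $\langle T[f],c_1(T_\fc)\rangle+T(f,\jc_{\fc,D})=\langle T[\bar{f}_1],\oc_{\bar{X}_1}(-1)\rangle$.

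For the lower bound, the crucial arithmetic observation is that at any $t_j\in f^{-1}(|D|)$ the logarithmic-frame components of $f'$ include the logarithmic derivatives $f'_j/f_j$ for the branches of $D$ through $f(t_j)$, each of which has a \emph{simple} pole at $t_j$. Hence $(\tau-t_j)f'(\tau)$ is holomorphic and nonzero in $f^*T_X\langle-\log D\rangle$ at $t_j$, regardless of how many branches of $D$ pass through $f(t_j)$, so $\mu'_j=-1$ at every such point while $\mu'_j\ge 0$ elsewhere; this gives
\[
\sum_{|t_j|<r}\mu'_j\log\tfrac{r}{|t_j|}\ge -N^{(1)}_{f,D}(r).
\]
I would then extract the tautological half of the Lelong--Poincar\'e identity above, apply Jensen, bound $-\log\le-\log^+$, and invoke the logarithmic derivative lemma applied to each $f'_j/f_j$ (and to $u'_{\alpha j}/u_{\alpha j}$ from local logarithmic coordinate systems on the $D$-complement, as in the proof of Lemma \ref{high tower}) to obtain
\[
\int_1^r\tfrac{dt}{t}\int_{B(t)}\bar{f}_1^*\Theta_{g_D}(\oc_{\bar{X}_1}(-1))\ge -N^{(1)}_{f,D}(r)-\oc(\log T_f(r)+\log r).
\]
Dividing by $T_f(r)$ and using $T_f(r)/\log r\to\infty$ from Theorem \ref{dense} yields $\langle T[\bar{f}_1],\oc_{\bar{X}_1}(-1)\rangle\ge -N^{(1)}(f,D)$. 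The hard step is the uniform identification $\mu'_j=-1$ at every $t_j\in f^{-1}(|D|)$; this relies on the simple-pole structure of logarithmic derivatives together with the $\fc$-invariance of each component of $D$, and is precisely what forces the \emph{truncated} counting function $N^{(1)}_{f,D}$ (rather than the full $N_{f,D}$) to appear on the right-hand side.
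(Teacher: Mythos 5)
Your proposal is correct and follows essentially the same route as the paper: the logarithmic lift $\bar f'$ in the frame $\{z_j\partial_{z_j},\partial_{z_i}\}$, the two Lelong--Poincar\'e expansions of $dd^c\log|f'|^2_{\omega_{X,D}}$ combined via Jensen and the logarithmic derivative lemma, and the key observation that $\bar f'$ has a pole of order exactly one at each $t_j\in f^{-1}(|D|)$ (because each relevant component is a genuine logarithmic derivative $f_i'/f_i$), which is precisely what produces the truncated counting function $N^{(1)}_{f,D}$. Your explicit remark that $(\tau-t_j)\bar f'(\tau)$ has a nonzero limit regardless of the number of branches of $D$ through $f(t_j)$ makes the ``$\mu_j=-1$'' step a bit more transparent than the paper's one-line assertion, but the argument is the same.
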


\begin{proof}
Since the image of $f$ is not contained in $|D|$, the condition that $f(t_j)\in D$ implies $f(t_j)\in \text{Sing}(\fc)$.

We use the notation and concepts introduced in Section \ref{notions}. Let $(U_\alpha)$ be a partition of unity on~$X$. On each $U_\alpha$ we have
$$
v_\alpha=\sum_{j=1}^{k}z_ja_{\alpha}^j\frac{\d}{\d 
z_j}+\sum_{i=k+1}^{n}a_{\alpha}^i\frac{\d}{\d z_i}
$$
as the generator of $\fc$, where $z_1\cdots z_k=0$ is the local equation of $D$ in $U_\alpha$. The hermitian metric $\omega_{X,D}$ induces a singular metric 
$h_{s,D}$ on $T_\fc$ with local weight 
$$
\varphi_{\alpha,D}=-\log |v_\alpha|^2_{\omega_{X,D}}=-\log 
\sum_{i,j}a^i_\alpha\overline{a^j_\alpha}\omega_{i\overline{j}}.
$$
If $h=h_se^{-\varphi_{s,D}}$ is a smooth metric  on $T_\fc$, then  
$\varphi_{s,D}$ is the characteristic function associated with $\jc_{\fc,D}$. 

Since the image of $f$ is not contained in $\text{Sing}(\fc)\cup |D|$, on $U_\alpha$ we have
\begin{eqnarray}\label{log tangent}
\bar{f}'(\tau):=(\frac{f_1'}{f_1},\ldots,\frac{f_k'}{f_k},f'_{k+1},\ldots,
f'_n)=\lambda_\alpha(\tau)(a^1_\alpha(f),\ldots,a^n_\alpha(f)),
\end{eqnarray}
where $\lambda_\alpha(\tau)$ is the meromorphic functions with poles contained in 
$f^{-1}(\text{Sing}(\fc)\cup |D|)$. By (\ref{log tangent}) we know that $f(t_j)\in D$ implies $f(t_j)\in \text{Sing}(\fc)$. Indeed, if $f(t_j)\in D$, then $\lambda_\alpha$ has a pole of order at least 1 at $t_j$, and such a pole can only occur at a point of $\text{Sing}(\fc)$.

Observe that $\bar{f}'(\tau)$ can be seen as a \emph{meromorphic} 
section of  $\bar{f}_1^*\oc_{\bar{X}_1}(-1)$, where $\bar{f}_1(\tau)$ is the lift 
of $f$ to $\bar{X}_1$, and denote $\Omega_\alpha=f^{-1}(U_\alpha)$. Then on $\Omega_\alpha\cap B(r)$ we have
$$
dd^c\log 
|f'(\tau)|^2_{\omega_{X,D}}=\sum_{|t_j|<r,t_j\in \Omega_\alpha}\eta_j\delta_{t_j}
+f^*dd^c\log|v_\alpha|^2_{\omega_{X,D}},
$$
where $\eta_j$ is the vanishing order of $\lambda_\alpha(\tau)$ on $t_j$. Since $v_\alpha=g_{\alpha\beta}v_{\beta}$, we see that $\eta_j$ does not depend on the partition of unity, and thus on $B(r)$ we have
\begin{equation}\label{log 1}
dd^c\log 
|f'(\tau)|^2_{\omega_{X,D}}=\sum_{|t_j|<r}\eta_j\delta_{t_j}
-f^*\Theta_{h}(T_\fc)+f^*dd^c\varphi_{s,D}.
\end{equation}

On the other hand, since 
$\omega_{X,D}$ induces a natural smooth metric on $T_X\langle -\log D\rangle$, it also induces a smooth hermitian metric $\bar{h}_1$ on $\oc_{\bar{X}_1}(-1)$, 
thus
\begin{equation}\label{log 2}
dd^c \log |f'(\tau)|^2_{\omega_{X,D}}=dd^c\log 
|\bar{f}'(\tau)|^2_{\bar{h}_1}=\sum_{0<|t_j|<r}\mu_j\delta_{t_j}-\bar{f}^*_1\Theta_{\bar{h}_1}
(\oc_{\bar{X}_1}(-1))
\end{equation}
on $B(r)$, where $\mu_j$ is the vanishing order of $\bar{f}'(t)$. By (\ref{log tangent}) we know that $\mu_j=-1$ if and only if $f(t_j)\in |D|$, and otherwise $\mu_j\geq 0$.   Then by using the logarithmic derivative lemma again as in Lemma \ref{high tower}, we find
\begin{equation}\label{log tauto}
\langle T[\bar{f}_1], \oc_{\bar{X}_1}(-1)\rangle \geq 
-\liminf\limits_{r\rightarrow \infty}\frac{N_{f,D}^{(1)}(r)}{T_f(r)}.
\end{equation}
We can combine (\ref{log 1}) and (\ref{log 2}) together to obtain
$$
f^*\Theta_{h}(T_\fc)=-\sum_{0<|t_j|<r}\mu_j\delta_{t_j}+\sum_{|t_j|<r}\eta_j\delta_{t_j}+\bar{f}^*_1\Theta_{\bar{h}_1}
(\oc_{\bar{X}_1}(-1))
+f^*dd^c\varphi_{s,D}
$$
on $B(r)$. By arguments very similar to those in the proof of Theorem \ref{formula tangent}, we get
\begin{eqnarray}\nonumber
\langle T[f], \Theta_h(T_\fc)\rangle
=\langle T[\bar{f}_1], \oc_{\bar{X}_1}(-1)\rangle-T(f,\jc_{\fc,D}),
\end{eqnarray}
and the theorem follows from (\ref{log tauto}).
\end{proof}

\subsection{Siu's refined tautological inequality}

In \cite{Siu02} Y-T. Siu proved McQuillan's ``refined tautological inequality'' 
by applying the traditional function-theoretical formulation. We will give here 
an improvement of this result. First we begin with the following lemma due to Siu.

\begin{lem}\label{siu lemma}
	Let $U$ be an open neighborhood of $0$ in $\mathbf{C}^n$ and 
	$\pi:\widetilde{U}\rightarrow U$ be the blow-up at $0$. Then 
	$\pi^*(\oc_U(\Omega_U^1))\subset \ic_E\otimes(\Omega^1_{\widetilde{U}}\langle -\log 
	E\rangle)$, where $\ic_E$ is the ideal sheaf of the exceptional divisor~$E$.
\end{lem}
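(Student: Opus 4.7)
The plan is to work entirely in local coordinates, verifying the inclusion chart by chart on the blow-up. The blow-up $\pi : \widetilde{U} \to U$ at the origin of $\cb^n$ is covered by the standard affine charts $V_k$ ($k=1,\ldots,n$), each isomorphic to an open subset of $\cb^n$. By symmetry under permutation of the coordinates, it suffices to check the inclusion on one chart, say $V_1$ with coordinates $(w_1,\ldots,w_n)$, on which $\pi$ takes the form $z_1 = w_1$ and $z_i = w_1 w_i$ for $i\geq 2$. On $V_1$ the exceptional divisor $E$ has local equation $w_1 = 0$, so $\ic_E = (w_1)$, and $\Omega^1_{\widetilde U}\langle -\log E\rangle$ is freely generated over $\oc_{\widetilde U}$ by $\frac{dw_1}{w_1},\,dw_2,\ldots,dw_n$.

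Next I would compute $\pi^* dz_i$ explicitly. A direct calculation yields $\pi^* dz_1 = dw_1 = w_1\cdot \frac{dw_1}{w_1}$ and, for $i\geq 2$, $\pi^* dz_i = d(w_1 w_i) = w_i\, dw_1 + w_1\, dw_i = w_1\bigl(w_i\,\frac{dw_1}{w_1}+dw_i\bigr)$. In both cases $\pi^* dz_i$ is the product of the local generator $w_1$ of $\ic_E$ with a section of $\Omega^1_{\widetilde U}\langle -\log E\rangle$; hence each $\pi^* dz_i$ lies in $\ic_E \otimes \Omega^1_{\widetilde U}\langle -\log E\rangle$.

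Since $\pi^*\oc_U(\Omega_U^1)$ is generated on $V_1$ by the pullbacks $\pi^* dz_1,\ldots,\pi^* dz_n$, this establishes the desired inclusion over $V_1$. By symmetry the same argument works on every other chart $V_k$, and away from the exceptional divisor the inclusion is automatic since there $\ic_E = \oc_{\widetilde U}$ and $\Omega^1_{\widetilde U}\langle -\log E\rangle = \Omega^1_{\widetilde U}$. There is no real obstacle to overcome: the lemma is essentially a bookkeeping observation, whose content is that under the blow-up of a point the pullback of a regular $1$-form not only remains logarithmic along $E$ but in fact acquires an additional factor vanishing to first order along $E$.
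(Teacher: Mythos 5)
Your proof is correct and coincides with the coordinate computation the paper itself performs (though the paper states the lemma without a standalone proof, attributing it to Siu, the identical chart calculation appears in the proof of Theorem \ref{siu formula}, where $\tau=\pi^*\sigma$ is computed in the affine chart $w_1\neq 0$ and the factor $z_1$ pulled out in front of a logarithmic form). Your verification that $\pi^*dz_1 = w_1\cdot\frac{dw_1}{w_1}$ and $\pi^*dz_i = w_1(w_i\frac{dw_1}{w_1}+dw_i)$ is exactly that bookkeeping, so there is nothing to add.
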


\begin{thm}\label{siu formula}
	Let $H$ be an ample line bundle on a projective manifold $X$ of dimension $n$. 
	Let $Z$ be a finite subset of $X$ and $f:\cb\rightarrow X$ be an entire curve. 
	Let $\sigma\in H^0(X,S^l\Omega_X\otimes (klH))$ be such that $f^*\sigma$ is not 
	identically zero on $\cb$. Let $W$ be the zero divisor of $\sigma$ in 
	$X_1:=P(T_X)$, and $\pi:Y\rightarrow X$ be the blow-up of $Z$ with 
	$E:=\pi^{-1}(Z)$. Then we have
	\begin{equation}\label{siu}
	\frac{1}{l}N_{f_1,W}(r)+T_{\hat{f},\Theta_E}(r)-N^{(1)}_{f,m_Z}(r)\leq 
	kT_{f,\Theta_H}(r)+\oc(\log T_{f,\Theta_H}(r)+\log r),
	\end{equation}
	where $N^{(1)}_{f,m_Z}(r)$ is the truncated counting function with respect to the 
	ideal $m_Z$, and $\Theta_H$ (resp. $\Theta_E$) is the curvature of $H$ (resp. 
	$\Theta_H$) with respect to some smooth metric $h_H$ (resp. $h_E$).
\end{thm}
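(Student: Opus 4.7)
The plan is to pull back $\sigma$ via $\pi:Y\to X$, invoke Siu's lemma (Lemma \ref{siu lemma}) to realise it as a section of a suitably twisted line bundle on the logarithmic projective bundle $\bar{Y}_1:=P(T_Y\langle-\log E\rangle)$, and then extract the inequality by combining Nevanlinna's First Main Theorem with the logarithmic version of the tautological inequality used in Lemma \ref{high tower} and Theorem \ref{log version}.

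By Siu's lemma, $\pi^{*}(S^l\Omega_X)\hookrightarrow \ic_E^l\otimes S^l\Omega_Y\langle-\log E\rangle$, so $\pi^{*}\sigma$ corresponds to a global section $\widetilde{\sigma}\in H^{0}(\bar{Y}_1,\mc)$ of
$$\mc:=\oc_{\bar{Y}_1}(l)\otimes\widetilde{\pi}^{*}\oc(-lE)\otimes kl(\pi\widetilde{\pi})^{*}H,$$
where $\widetilde{\pi}:\bar{Y}_1\to Y$ is the projection. Let $\widetilde{W}=\{\widetilde{\sigma}=0\}$ and let $\hat{f}_1:\cb\to\bar{Y}_1$ denote the logarithmic lift of $\hat{f}$. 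Applying Theorem \ref{first} to $\widetilde{\sigma}$ and discarding the non-negative proximity term gives
$$N_{\hat{f}_1,\widetilde{W}}(r)\leq l\,T_{\hat{f}_1,\oc_{\bar{Y}_1}(1)}(r)-l\,T_{\hat{f},\Theta_E}(r)+kl\,T_{f,\Theta_H}(r)+\oc(1).$$
The logarithmic analogue of the tautological inequality, proved exactly as in Lemma \ref{high tower} and Theorem \ref{log version} via the logarithmic derivative lemma (using that the meromorphic logarithmic lift $\bar{\hat{f}}'$ has poles of order exactly one at every preimage of $E$), then gives
$$T_{\hat{f}_1,\oc_{\bar{Y}_1}(1)}(r)\leq N^{(1)}_{f,m_Z}(r)+\oc(\log T_{f,\Theta_H}(r)+\log r).$$
Substituting and dividing by $l$ produces the asserted inequality, but with $N_{\hat{f}_1,\widetilde{W}}$ in place of $N_{f_1,W}$.

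The remaining, and main, step is therefore the counting-function identity
$$N_{\hat{f}_1,\widetilde{W}}(r)=N_{f_1,W}(r)+\oc(1),$$
which I would verify by a direct local computation. In blow-up coordinates $(w_1,\ldots,w_n)$ on $Y$ with $\pi(w)=(w_1,w_1w_2,\ldots,w_1w_n)$, Siu's identity $\pi^{*}dz_j=w_1\widetilde{\omega}_j$ gives $\pi^{*}\sigma=w_1^{l}\widetilde{\sigma}_{\mathrm{loc}}$. Evaluating at the logarithmic tangent of $\hat{f}$, one finds that $\widetilde{\sigma}\circ\hat{f}_1$ differs from $\sigma\circ f_1$ at the function level by the factor $(w_1\circ\hat{f})^{-l}$, whose pole at each $t_j\in f^{-1}(Z)$ has order $l\mu_j$ (where $\mu_j$ is the multiplicity of $f$ along $m_Z$ at $t_j$). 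This shift is exactly compensated, on the one hand by the zero of order $l\mu_j$ encoded by the twist $\widetilde{\pi}^{*}\oc(-lE)$ in $\mc$, and on the other by the discrepancy between the meromorphic frames $(\bar{\hat{f}}')^{-l}$ for $\oc_{\bar{Y}_1}(l)$ and $(f')^{-l}$ for $\oc_{X_1}(l)$: the former has a zero of order $l$ at $t_j$ coming from the pole of order one in $\bar{\hat{f}}'$, the latter a pole of order $l(\mu_j-1)$ from the vanishing of $f'$. The three corrections arrange themselves so that the section orders of $\widetilde{\sigma}\circ\hat{f}_1$ and $\sigma\circ f_1$ coincide at every $t_j\in\cb$, which is the identity we want. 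The hard part is precisely this bookkeeping among the Siu factor, the log-versus-standard projectivized lifts, and the ideal sheaf $\ic_E$; once it is settled, assembling the three displays proves the theorem.
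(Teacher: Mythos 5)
Your proposal follows the same skeleton as the paper -- pull $\sigma$ back to $Y$, invoke Siu's lemma to produce the holomorphic section $\widetilde{\tau}=\tau/s_E^{\otimes l}$ of the twisted logarithmic symmetric power, and control everything with the logarithmic derivative lemma through the meromorphic logarithmic lift $\bar{\hat{f}}'$ -- but you reorganize where the Nevanlinna machinery is applied. The paper first establishes the pointwise norm identity $\lVert\tau(\bar{\hat{f}}'(t))\rVert_{\pi^*h_H^{\otimes kl}}=\lVert\sigma(f'(t))\rVert_{h_H^{\otimes kl}}$ (its equation (\ref{equal})), then integrates via Jensen and decomposes $N(\sigma(f'),r)$; you instead apply the First Main Theorem directly to $\widetilde{\sigma}$ on $\bar{Y}_1=P(T_Y\langle-\log E\rangle)$ and a logarithmic tautological bound, and then must transfer $N_{\hat{f}_1,\widetilde{W}}$ back to $N_{f_1,W}$. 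That transfer, which you correctly identify as the crux, is equivalent in content to (\ref{equal}): at a point $t_j$ where $f$ meets $Z$ with multiplicity $m$, one has $\ord_{t_j}\!\big(\tau(\bar{\hat{f}}')\big)=\ord_{t_j}\!\big(\sigma(f')\big)$, the left side equals $lm+\ord_{t_j}(P_{\widetilde{\tau}}\circ\hat{f}_1)-l$ (the $+lm$ from $s_E^{\otimes l}$, the $-l$ from the order-one pole of $\bar{\hat{f}}'$), and the right side equals $\ord_{t_j}(P_\sigma\circ f_1)+l(m-1)$, so $\ord_{t_j}(P_{\widetilde{\tau}}\circ\hat{f}_1)=\ord_{t_j}(P_\sigma\circ f_1)$ and in fact $N_{\hat{f}_1,\widetilde{W}}(r)=N_{f_1,W}(r)$ with no $\oc(1)$ slack. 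Your bookkeeping sketch for this (`` zero of order $l\mu_j$ from the twist'', ``pole of order $l(\mu_j-1)$ from $f'$'', etc.) conflates shifts attributed to the transition function $w_1^{-l}$ with shifts attributed to the frames and the $\oc(-lE)$ twist in a way that does not obviously close, so you should carry out the local computation in full -- it amounts to reproving (\ref{equal}). Two smaller remarks: the claim that $\bar{\hat{f}}'$ has poles of order \emph{exactly} one is true but not needed for your inequality (at most one suffices), and note that Theorem \ref{siu formula} does not assume $f$ transcendental, so in your ``logarithmic tautological'' step you must keep the full error term $\oc(\log T_{f,\Theta_H}(r)+\log r)$ rather than absorb it as in Lemma \ref{high tower}.
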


\begin{rem}{\rm
	In \cite{Siu02} and \cite{McQ98}, a slightly weaker inequality is obtained comparatively to (\ref{siu}), with $m_{f,m_Z}(r)$ in place of 
	$T_{\hat{f},\Theta_E}(r)-N^{(1)}_{f,m_Z}(r)$.}
\end{rem}
\begin{proof}[Proof of Theorem \ref{siu formula}]
	Let $\tau=\pi^*\sigma$. By Lemma \ref{siu lemma}, $\tau$ is a holomorphic 
	section of $S^l(\Omega_{Y}(\log E))\otimes \pi^*(klH)$ over $Y$ and $\tau$ vanishes 
	to order at least $l$ on $E$. Let $s_E$ be the canonical section of $E$. If we divide $\tau$ by $s^{\otimes l}_E$, then 
	$\widetilde{\tau}:=\frac{\tau}{s^{\otimes l}_E}$ is a holomorphic section of 
	$S^l(\Omega_{Y}(\log E))\otimes \pi^*(klH)\otimes (-lE)$ over $Y$. We now prove 
	that 
	\begin{equation}\label{equal}
	\lVert\tau(\bar{\hat{f}}'(t))\rVert_{\pi^*h^{\otimes 
			kl}_H}=\lVert\sigma({f'(t)})\rVert_{h^{\otimes kl}_H},
	\end{equation}
	where $\bar{\hat{f}}'(t)$ is the derivative of $\hat{f}$ in $T_{\hat{X}}\langle -\log E\rangle$ (see Definition \ref{log tangent}). To make things simple we assume $l=1$. Let $p$ be a point in $Z$ and let $U$ be a
	small open set containing $p$ such that locally we have
	$$
	\sigma=\sum_{i=1}^{n}a_idz_i\otimes e^{\otimes k},
	$$
	where $e$ is the local section of $H$ and $p$ is the origin. The blow-up at $p$ 
	is the complex submanifold of $U\times \mathbb{P}^{n-1}$ defined by 
	$w_jz_k=w_kz_j$ for $1\leq j\neq k\leq n$, where $[w_1:\dots:w_n]$ are the 
	homogeneous coordinates of $\mathbb{P}^n$. In the affine coordinate chart $w_1\neq0$ we have the relation
	$$
	(z_1,z_1w_2,\ldots,z_1w_n)=(z_1,z_2,\ldots,z_n),
	$$
	thus
	$$\tau=z_1\left( (a_1+\sum_{i=2}^{n}a_iw_i)d\log 
	z_1+\sum_{i=2}^{n}a_idw_i\right)\otimes (\pi^*e)^{\otimes k},$$
	and $\hat{f}(t)=(f_1,\frac{f_2}{f_1},\ldots,\frac{f_n}{f_1})$ in the local 
	coordinate $(z_1,w_2,\ldots,w_n)$. Thus 
	$$\bar{\hat{f}}'(t):=\left(\frac{f'_1}{f_1},(\frac{f_2}{f_1})',\ldots,(\frac{f_n}{f_1}
	)'\right)$$
        with respect to the local section $(z_1\frac{\d}{\d z_1},\frac{\d}{\d 
		w_2},\ldots,\frac{\d}{\d w_n})$ of $T_{\hat{X}}\langle -\log E\rangle$. It is easy to 
	check the equality (\ref{equal}).

	By the logarithmic derivative lemma again we know that $\frac{1}{2\pi}\int_{0}^{2\pi}\log^+ 
	\lVert\tau(\bar{\hat{f}}'(re^{i\theta}))\rVert_{\pi^*h^{\otimes kl}_H}$ and \newline 
	$\frac{1}{2\pi}\int_{0}^{2\pi}\log^+ 
	\lVert\widetilde{\tau}(\bar{\hat{f}}'(re^{i\theta}))\rVert_{\pi^*h^{\otimes kl}_H\otimes 
		h^{*\otimes l}_{E}}$ are both of the order $\oc(\log T_{f,\Theta_H}(r)+\log 
	r)$. Using $\log x=\log^+x-\log^+\frac{1}{x}$ for any $x>0$, we obtain
	\begin{eqnarray}\nonumber
	2lm_{\hat{f},E}(r)&=&\frac{1}{2\pi}\int_{0}^{2\pi} \log^+\frac{1}{\lVert s^{\otimes l}_E\circ 
		\hat{f}(re^{i\theta})\rVert^2_{h_E}}\\\nonumber
	&\leq&\frac{1}{2\pi}\int_{0}^{2\pi} 
	\log^+\frac{1}{\lVert\tau(\bar{\hat{f}}'(re^{i\theta}))\rVert^2_{\pi^*h^{\otimes 
				kl}_H}}+\frac{1}{2\pi}\int_{0}^{2\pi}\log^+ 
	\lVert\widetilde{\tau}(\bar{\hat{f}}'(re^{i\theta}))\rVert^2_{\pi^*h^{\otimes kl}_H\otimes 
		h^{*\otimes l}_{E}}+\oc(1)\\\nonumber
	&=&-\frac{1}{2\pi}\int_{0}^{2\pi} \log\lVert\tau(\bar{\hat{f}}'(re^{i\theta}))\rVert^2_{\pi^*h^{\otimes 
			kl}_H}+\frac{1}{2\pi}\int_{0}^{2\pi} \log^+\lVert\tau(\bar{\hat{f}}'(re^{i\theta}))\rVert^2_{\pi^*h^{\otimes 
			kl}_H}\\\nonumber
	&+&\frac{1}{2\pi}\int_{0}^{2\pi}\log^+
	\lVert\widetilde{\tau}(\bar{\hat{f}}'(re^{i\theta}))\rVert^2_{\pi^*h^{\otimes kl}_H\otimes 
		h^{*\otimes l}_{E}}+\oc(1)\\\nonumber
	&=&-\frac{1}{2\pi}\int_{0}^{2\pi} \log\lVert\tau(\bar{\hat{f}}'(re^{i\theta}))\rVert^2_{\pi^*h^{\otimes 
			kl}_H}+\oc(\log T_{f,\Theta_H}(r)+\log r)\\\label{corres}
	&=&-\frac{1}{2\pi}\int_{0}^{2\pi} \log\lVert\sigma({f'(re^{i\theta})})\rVert^2_{h^{\otimes kl}_H}+\oc(\log 
	T_{f,\Theta_H}(r)+\log r),
	\end{eqnarray}
	where the last equality is due to equality (\ref{equal}). Observe that there is a 
	natural isomorphism betweem
	$H^0(X,S^l(\Omega_{X})\otimes (klH))$ and $H^0(X_1,\oc_{X_1}(l)\otimes 
	p^*(klH))$, where $X_1:=P(T_X)$ and $p$ is its projection to $X$. We denote by 
	$P_\sigma$ the corresponding section of $\sigma$ in 
	$H^0(X_1,\oc_{X_1}(l)\otimes p^*(klH))$, whose zero divisor is $W$. Then we have
	$$
	\lVert P_\sigma (f_1(t))\cdot (f'(t))^l\rVert_{p^*h^{\otimes 
			kl}_H}=\lVert\sigma (f'(t))\rVert_{h^{\otimes kl}_H},
	$$
	and thus on $B(r)$ we have
	$$
	N(\sigma (f'(t)),r)=N_{f_1,W}(r)+l\sum_{|t_j|<r}\mu_j\frac{r}{|t_j|},
	$$
	where $N(\sigma (f'(t)),r)$ is the counting function of $\sigma (f'(t))$ and 
	$\mu_j$ is the vanishing order of $f'(t)$ at $t_j$. Therefore, by applying the Jensen formula to the last term in (\ref{corres}) we obtain
\begin{equation}\label{estimate siu}
2lm_{\hat{f},E}(r)+2N(\sigma (f'(t)),r)\leq 2klT_{f,\Theta_H}(r)+\oc(\log 
T_{f,\Theta_H}(r)+\log r)). 
\end{equation}
	Since we have
	$$
	N^{(1)}_{f,m_Z}(r)+\sum_{|t_j|<r,f(t_j)\in Z}\mu_j\frac{r}{|t_j|}=N_{f,m_Z}(r)=N_{\hat{f},E}(r),
	$$
	then by applying Nevanlinna's First Main
	Theorem to $(\hat{f},E)$ we get
	$$
	T_{\hat{f},\Theta_E}(r)=N_{\hat{f},E}(r)+m_{\hat{f},E}(r)+\oc(1),
	$$
	and we can combine this with (\ref{estimate siu}) to obtain
	$$
	T_{\hat{f},\Theta_E}(r)-N^{(1)}_{f,m_Z}(r)+\frac{1}{l}N_{f_1,W}(r)\leq 
	kT_{f,\Theta_H}(r)+\oc(\log T_{f,\Theta_H}(r)+\log r)).
	$$
\end{proof}

Now we have the following refined tautological equality:
\begin{thm}
	Let $X$ be a K\"ahler manifold of dimension $n$ and $f:\cb\rightarrow X$ be 
	a transcendental entire curve. Then for any finite set $Z$ we have
	$$
	T[f_1]\cdot \oc_{X_1}(-1)\geq T(f,m_Z)-N^{(1)}_{f,m_Z}\geq m(f,m_Z):=\lim\limits_{r\rightarrow 
		\infty}\frac{m_{f, m_Z}(r)}{T_{f,\Theta_H}(r)}.
	$$
\end{thm}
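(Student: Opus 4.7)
The plan is to reduce the first inequality to a logarithmic tautological inequality on the blow-up of $X$ along $Z$, in the spirit of Lemma \ref{high tower} and the estimate (\ref{log tauto}) in Theorem \ref{log version}. Let $p:\hat{X}\to X$ denote the blow-up of the finite set $Z$ with exceptional divisor $E$, let $\hat{f}:\cb\to\hat{X}$ be the lift of $f$, and set $\bar{X}_1:=P(T_{\hat X}\langle -\log E\rangle)$, with the natural projections $\pi_1:\bar{X}_1\to\hat{X}$ and $q:\bar{X}_1\to X_1$. The key linear-algebra input is the identification
\begin{equation*}
T_{\hat X}\langle -\log E\rangle \;\cong\; p^{*}T_X\otimes\oc_{\hat X}(-E),
\end{equation*}
which I would check by a direct computation in standard blow-up charts; it crucially uses that the centres are points, of codimension $n$, so all twist contributions along $E$ collapse into a single factor of $\oc_{\hat X}(-E)$. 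Projectivising and applying $\oc_{P(V\otimes L)}(-1)=\oc_{P(V)}(-1)\otimes\pi^{*}L$ then yields the twist formula
\begin{equation*}
\oc_{\bar{X}_1}(-1)\;=\;q^{*}\oc_{X_1}(-1)\otimes\pi_1^{*}\oc_{\hat X}(-E).
\end{equation*}

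Next I would lift $\hat{f}$ to $\bar{\hat f}_1:\cb\to\bar{X}_1$ by its meromorphic logarithmic derivative $\bar{\hat f}'$, whose expression in coordinates adapted to $E=\{z_1=0\}$ is $(f_1'/f_1,f_2',\dots,f_n')$. Viewed as a meromorphic section of $\bar{\hat f}_1^{*}\oc_{\bar{X}_1}(-1)$, it has divisor of order exactly $-1$ at every $t_j$ with $\hat{f}(t_j)\in E$ and non-negative order elsewhere. Running the same template as in Lemma \ref{high tower}, namely Lelong-Poincar\'e for $\bar{\hat f}'$ followed by the Jensen formula (\ref{Jensen}) and the logarithmic derivative lemma (\ref{log}) applied to the entries $f_i'/f_i$ to absorb the proximity term into $\oc(\log T_f(r)+\log r)$, I would obtain the logarithmic tautological bound
\begin{equation*}
\langle T[\bar{\hat f}_1],\oc_{\bar{X}_1}(-1)\rangle\;\geq\;-N^{(1)}(f,m_Z).
\end{equation*}

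The compatibilities $q\circ\bar{\hat f}_1=f_1$ and $\pi_1\circ\bar{\hat f}_1=\hat{f}$, together with the twist formula above, give at the level of curvature pullbacks $\bar{\hat f}_1^{*}\Theta(\oc_{\bar{X}_1}(-1))=f_1^{*}\Theta(\oc_{X_1}(-1))-\hat{f}^{*}\Theta_E$, so after integration and normalisation by $T_f(r)$ one reads off $\langle T[\bar{\hat f}_1],\oc_{\bar{X}_1}(-1)\rangle=\langle T[f_1],\oc_{X_1}(-1)\rangle-T(f,m_Z)$. Substituting into the previous display delivers the first inequality. The second inequality follows at once from Nevanlinna's First Main Theorem (Theorem \ref{first}) applied to $\jc=m_Z$: one has $T(f,m_Z)=N(f,m_Z)+m(f,m_Z)$ as normalised limits, and since the multiplicities of $f$ along $m_Z$ are at least one, $N(f,m_Z)\geq N^{(1)}(f,m_Z)$, hence $T(f,m_Z)-N^{(1)}(f,m_Z)\geq m(f,m_Z)$.

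I expect the main difficulty to lie in the careful bookkeeping around the meromorphic section $\bar{\hat f}'$ when invoking the logarithmic derivative lemma, and in getting the sign and twist conventions right in the identification of $\oc_{\bar{X}_1}(-1)$; both are essentially local computations but are easy to mis-index. As a sanity check, taking $Z=\emptyset$ makes $E$ and all correction terms vanish and collapses the argument to the unrefined tautological inequality $\langle T[f_1],\oc_{X_1}(-1)\rangle\geq 0$ of Theorem \ref{tauto}.
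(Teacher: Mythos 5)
Your argument is correct, and it takes a genuinely different route from the paper's proof. The paper deduces the statement from Theorem \ref{siu formula} (Siu's refined tautological inequality): one picks $k$ so that $\oc_{X_1}(1)\otimes p^*(kH)$ is ample, then $l$ so that $\oc_{X_1}(l)\otimes p^*(lkH)$ is very ample, chooses a section $\sigma$ with zero defect so that $N_{f_1,W}$ saturates the class of $\oc_{X_1}(l)\otimes p^*(lkH)$, and finally divides the inequality (\ref{siu}) by $T_{f,\Theta_H}(r)$ and passes to the limit. The technical weight there sits in the proof of Theorem \ref{siu formula}, which relies on Lemma \ref{siu lemma} and careful bookkeeping with the symmetric differential $\sigma$ and its pullback $\tau=\pi^*\sigma$. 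Your approach bypasses Siu's inequality and symmetric differentials entirely, working intrinsically with the log tangent sheaf. The pivotal identity is that for a blow-up of a \emph{point} the inclusion in Lemma \ref{siu lemma} is in fact an equality, $p^*\Omega_X^1=\ic_E\otimes\Omega^1_{\hat X}\langle-\log E\rangle$, i.e.\ $T_{\hat X}\langle-\log E\rangle\cong p^*T_X\otimes\oc_{\hat X}(-E)$; projectivising and twisting gives $\oc_{\bar{X}_1}(-1)=q^*\oc_{X_1}(-1)\otimes\pi_1^*\oc_{\hat X}(-E)$. This reduces the refined inequality to the logarithmic tautological inequality in the foliation-free setting, which you obtain by running the Lelong--Poincar\'e/Jensen/logarithmic-derivative template of Lemma \ref{high tower}, noting that the log derivative $\bar{\hat f}'$ has pole of order exactly one at every preimage of $E$ (exactly as in the computation of $\mu_j$ in Theorem \ref{log version}, and this computation does not actually use the foliation). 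What each approach buys: yours is shorter, makes the tautological nature of the result transparent, and does not require choosing sections with vanishing defect; the paper's detour through Theorem \ref{siu formula} produces a pointwise (in $r$) Nevanlinna-type inequality before any limit is taken, which is a stronger intermediate statement. One small notational slip: the log derivative should be written in terms of the coordinates of $\hat f$ on $\hat X$ (i.e.\ $(\hat f_1'/\hat f_1,\hat f_2',\dots,\hat f_n')$), not in terms of $f$; the intended meaning is clear, but the symbols $f_i$ are already in use for the components of $f$ on $X$.
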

\begin{proof}
	First we choose $k$ large enough, in such a way that $\oc_{X_1}(1)\otimes p^*(kH)$ is 
	ample over $X_1$. When we choose $l$ sufficient large, $\oc_{X_1}(l)\otimes 
	p^*(lkH)$ will be very ample over $X_1$. Hence there exists a section $\sigma\in 
	H^0(X_1, \oc_{X_1}(l)\otimes p^*(lkH))$ whose defect is zero, i.e.
	$$
	N(f_1,W):=\lim\limits_{r_k\rightarrow \infty}\frac{N_{f_1,W}(r_k)}{T_{f,\Theta_H}(r_k)}=\langle T[f_1],\oc_{X_1}(l)\otimes p^*(lkH)\rangle=\langle T[f_1],\oc_{X_1}(l)\rangle+kl,
	$$
	where $W$ is the zero divisor of $\sigma$, and where the last equality comes from $\langle T[f],\Theta_H\rangle=1$. Since $f$ is transcendental, by Theorem \ref{dense} we have
	$$
	\lim\limits_{r\rightarrow\infty}\frac{T_{f,\Theta_H}(r)}{\log r}=+\infty.
	$$
	We can thus divide both sides in (\ref{siu}) by $T_{f,\Theta_H}(r)$ and take $r\rightarrow \infty$ to obtain
	$$
	\frac{1}{l}N(f_1,W)+T(f,m_Z)-N^{(1)}_{f,m_Z}\leq k,
	$$
	and we obtain the formula in the theorem.
\end{proof}

\subsection{Intersection with the normal bundle}\label{normal section}

As an application for Theorem \ref{formula tangent}, we will study the 
intersection of $T[f]$ with the normal bundle; i.e.\ with $c_1(N_\fc)$. 
Before anything else, we begin with the following definition.

\begin{dfn}\label{reduced singularity}
Let $X$ be a K\"ahler manifold endowed with a foliation $\fc$ by curves. We say 
that $\fc$ has \emph{weakly reduced singularities} if
\begin{enumerate}
 \item For some log resolution $\pi:\hat{X}\rightarrow X$ of $\jc_{\fc}$, we 
have $T_{\hat{\fc}}=\pi^*T_\fc$, where $\hat{\fc}$ is the induced foliation of 
$\pi^*\fc$;
 \item the $L^2$ multiplier ideal sheaf $\ic(\jc_{\fc})$ of $\jc_{\fc}$ is equal 
to $\oc_X$, i.e.,  at each $p\in X$, assume that $v$ is the local generator of $\fc$ around $p$, then for any $f\in \oc_{X,p}$, we have
$$
\frac{|f|^2}{|v|^2_{\omega}}\in L^1_{\text{loc}},
$$
where $\omega$ is any smooth hermitian metric on $X$.
\end{enumerate}
\end{dfn}

With the previous definition, we have the following theorem.

\begin{thm}\label{negative normal}
Let $X$ be a projective manifold of dimension $n$ endowed with a foliation 
$\fc$ by curves with weakly reduced singularities. If $f:\cb\rightarrow X$ is a transcendental entire curve tangent to $\fc$, whose image is not contained in $\text{Sing}(\fc)$ and satisfies $\langle T[f], 
K_X\rangle>0$ (e.g. $K_X$ is big), then we have
$$
\langle T[\hat{f}], c_1(N_{\hat{\fc}})\rangle<0
$$
for some birational modification $(\hat{X},\hat{\fc})$ of $(X,\fc)$.
\end{thm}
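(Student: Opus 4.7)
The approach is to pass to a log resolution $p\colon \hat X\to X$ of $\jc_\fc$ for which condition (1) of Definition \ref{reduced singularity} provides the identification $T_{\hat\fc}=p^*T_\fc$. Combined with the canonical bundle formula $K_{\hat X}=p^*K_X+K_{\hat X/X}$ and the short exact sequence $0\to T_{\hat\fc}\to T_{\hat X}\to N_{\hat\fc}\to 0$, the relation $c_1(N_\fc)=-K_X-c_1(T_\fc)$ on $X$ lifts to the cohomological identity
$$
c_1(N_{\hat\fc})=p^*c_1(N_\fc)-K_{\hat X/X}
$$
on $\hat X$. Pairing with the Ahlfors current $T[\hat f]$ and using $p_*T[\hat f]=T[f]$ from Lemma \ref{bimero def} yields
$$
\langle T[\hat f],c_1(N_{\hat\fc})\rangle=-\langle T[f],K_X\rangle-\langle T[f],c_1(T_\fc)\rangle-\langle T[\hat f],K_{\hat X/X}\rangle.
$$

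Next I would invoke Theorem \ref{formula tangent main}, which gives $\langle T[f],c_1(T_\fc)\rangle\ge -T(f,\jc_\fc)$. Writing $p^{-1}\jc_\fc=\oc_{\hat X}(-D)$, Remark \ref{sheaf} identifies $T(f,\jc_\fc)=\langle T[\hat f],D\rangle$, so the previous display becomes
$$
\langle T[\hat f],c_1(N_{\hat\fc})\rangle\le -\langle T[f],K_X\rangle-\langle T[\hat f],K_{\hat X/X}-D\rangle.
$$
At this point condition (2) of Definition \ref{reduced singularity} enters: the standard birational formula $\ic(\jc_\fc)=p_*\oc_{\hat X}(K_{\hat X/X}-D)$ shows that $\ic(\jc_\fc)=\oc_X$ forces $K_{\hat X/X}-D$ to be an effective (necessarily $p$-exceptional) divisor. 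Because $f$ avoids $\text{Sing}(\fc)$ and the exceptional locus of $p$ sits over $\text{Sing}(\fc)$, the image of $\hat f$ is not contained in any irreducible component of $K_{\hat X/X}-D$. A component-by-component application of Nevanlinna's First Main Theorem, exactly as in the proof of Theorem \ref{nef}, then gives $\langle T[\hat f],K_{\hat X/X}-D\rangle\ge 0$, and the hypothesis $\langle T[f],K_X\rangle>0$ delivers the strict inequality $\langle T[\hat f],c_1(N_{\hat\fc})\rangle<0$.

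The main obstacle is the analytic-to-algebraic dictionary for condition (2): one must reconcile the $L^2$-type definition of $\ic(\jc_\fc)$ in Definition \ref{reduced singularity}, phrased in terms of local integrability of $|f|^2/|v|^2_\omega$, with the birational expression $p_*\oc_{\hat X}(K_{\hat X/X}-D)$ for the multiplier ideal of an ideal sheaf. This is essentially a Nadel-type statement applied to the characteristic function $\varphi_s=\log|v_\alpha|^2_\omega$ associated to $\jc_\fc$, but it has to be verified in the present singular-foliation setting. Once the effectivity of $K_{\hat X/X}-D$ is secured, the remainder of the argument is a purely formal combination of (\ref{formula 1}), the adjunction formula for $p$, and the non-negativity of Ahlfors intersections against effective divisors missing the image of $\hat f$.
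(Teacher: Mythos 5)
Your proof is correct and follows essentially the same route as the paper's. You write the intermediate relation as $c_1(N_{\hat\fc})=p^*c_1(N_\fc)-K_{\hat X/X}$ whereas the paper organizes the computation around the quantity $\langle T[\hat f],K_{\hat X}+T_{\hat\fc}\rangle=-\langle T[\hat f],c_1(N_{\hat\fc})\rangle$, but after substituting $T_{\hat\fc}=\pi^*T_\fc$ and $K_{\hat X}=\pi^*K_X+K_{\hat X/X}$ both reduce to the same three ingredients: the inequality $\langle T[f],c_1(T_\fc)\rangle+T(f,\jc_\fc)\geq 0$ from Theorem \ref{formula tangent}, the identification $T(f,\jc_\fc)=\langle T[\hat f],D\rangle$ from Remark \ref{sheaf}, and the effectivity of $K_{\hat X/X}-D$ (plus the fact that $\hat f$ misses its support) to conclude $\langle T[\hat f],K_{\hat X/X}-D\rangle\geq 0$. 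The concern you raise at the end --- reconciling the $L^2$ integrability condition $|g|^2/|v|^2_\omega\in L^1_{\mathrm{loc}}$ with the birational expression $\pi_*\oc_{\hat X}(K_{\hat X/X}-D)$ --- is not actually a gap: since $|v|^2_\omega$ is comparable to $\sum_i|a^i_\alpha|^2$, the condition is the standard analytic multiplier ideal $\ic(\varphi_s)$ attached to the quasi-psh function $\varphi_s=\log|v_\alpha|^2_\omega+O(1)$ with analytic singularities along $\jc_\fc$, and its coincidence with the algebraic multiplier ideal $\pi_*\oc_{\hat X}(K_{\hat X/X}-\lfloor D\rfloor)$ is the Nadel--Demailly comparison theorem, which the paper invokes by citing \cite{Laz04} without further comment.
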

\begin{proof}
From the standard short exact sequence 
$$
0\longrightarrow T_\fc\longrightarrow T_X\longrightarrow N_\fc\longrightarrow 0
$$
that holds outside of a codimension 2 subvariety, we find
$$
c_1(K_X)+ c_1(T_\fc)=-c_1(N_\fc).
$$
By the definition of multiplier ideal sheaves (ref. \cite{Laz04}) we have
$$
\ic(\jc_{\fc})=\pi_*(K_{\hat{X}/X}-D),
$$
where $\pi:\hat{X}\rightarrow X$ is a log resolution of $\jc_{\fc}$ such that 
$\pi^*\jc_{\fc}=\oc_{\hat{X}}(-D)$. We know that $\ic(\jc_{\fc})=\oc_X$ if 
and only if
$K_{\hat{X}/X}-D$ is effective. By the assumption that the image of $f$ is not contained in $\text{Sing}(\fc)$, i.e.\ in the zero scheme of $\jc_{\fc}$, we know that the image of $\hat{f}$ is not contained in the support of the exceptional divisor, and thus
$$
\langle T[\hat{f}],K_{\hat{X}/X}-D\rangle \geq 0.
$$
Therefore we have
\begin{eqnarray}\nonumber
\langle T[\hat{f}], K_{\hat{X}}+T_{\hat{\fc}}\rangle &=&\langle T[\hat{f}], 
\pi^*K_{X}+\pi^*T_{\fc}+K_{\hat{X}/X}\rangle\\\nonumber
&\geq&\langle T[\hat{f}], \pi^*K_{X}+\pi^*T_{\fc}+D\rangle\\\nonumber
&=& \langle T[f], K_{X}+T_{\fc}\rangle +T(f,\jc_{\fc}),
\end{eqnarray}
where the last equality follows from the fact that $\pi_*T[\hat{f}]=T[f]$ and $T(f,\jc_\fc)=\langle
 T[\hat{f}],D\rangle$ (see Remark \ref{sheaf}). By Theorem \ref{formula tangent} we have
$$
\langle T[f], T_{\fc}\rangle +T(f,\jc_{\fc})\geq0,
$$
thus 
$$
-\langle T[\hat{f}], c_1(N_{\hat{\fc}})\rangle=\langle T[\hat{f}], 
K_{\hat{X}}+T_{\hat{\fc}}\rangle\geq \langle T[f],K_X\rangle >0.
$$
The theorem is proved.
\end{proof}

By the reduction theorem of singularities for surface foliations due to Seidenberg, for 
any pair $(X_0,\fc_0)$ there exists a finite sequence of blow-ups such that the 
induced pair $(X,\fc)$ has singularities of one of the following two types:
\begin{itemize}
\item a non-degenerate singular point $x_0$, in the sense that
$$
\log\frac{|a_1|^2+|a_2|^2}{|z_1|^2+|z_2|^2}=\oc(1).
$$
\item a degenerate singular point $x_1$ ``of type $k$'', such that
$$
\log\frac{|a_1|^2+|a_2|^2}{|z_1|^2+|z_2|^{2k}}=\oc(1), 
$$
where $k\geq2$ (we also call this a ``type $k$'' singularity).
\end{itemize}
Moreover
\begin{itemize}
\item Any singular point of $\fc$ is either non-degenerate or degenerate
of type $k$.
\item For any blow-up $\pi:\hat{X}\rightarrow X$ of a point on $X$, we have 
$\pi^*T_\fc=T_{\hat{\fc}}$ for  the induced foliation $\hat{\fc}$.
\item For any blow-up of the non-degenerate point $x_0$, $\hat{\fc}$ has two 
singularities on the exceptional divisor and both of them are non-degenerate.
\item For the blow-up of a degenerate point $x_1$ of type $k$, on the 
exceptional divisor $\hat{\fc}$ has a non-degenerate singular point and a 
degenerate one with the same type as $x_1$.
\end{itemize}

By the property above it is easy to show that $\ic(\jc_{\fc})= \oc_{X}$ if 
$\fc$ is reduced, thus $\fc$ is weakly reduced in the sense of Definition 
\ref{reduced singularity}. Then we can get another proof of McQuillan's theorem 
without using his ``Diophantine approximation analysis'' (still, we will make use of ``Diophantine approximation" in the next section):
\begin{thm}
Let $X$ be a complex surface endowed with a foliation $\fc$. If $\langle T[f], 
K_X\rangle>0$ (e.g.\ if $K_X$ is big), then any entire curve tangent to $\fc$ is algebraically degenerate.
\end{thm}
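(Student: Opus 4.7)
The plan is to argue by contradiction. Suppose that some entire curve $f:\cb\to X$ tangent to $\fc$ is Zariski dense; then $f$ is transcendental by Theorem \ref{dense}, and its image avoids the discrete set $\text{Sing}(\fc)$. By Seidenberg's reduction theorem recalled just above, one can perform a finite sequence of point blow-ups $\pi:\tilde{X}\to X$ such that the induced foliation $\tilde{\fc}$ has only reduced singularities, each either non-degenerate or degenerate of type $k\ge 2$. Lemma \ref{bimero def} produces a lift $\tilde{f}:\cb\to\tilde{X}$ with $\pi_*T[\tilde{f}]=T[f]$; it is again Zariski dense, hence transcendental, and its image misses $\text{Sing}(\tilde{\fc})$.

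The key technical step is to verify that $\tilde{\fc}$ is weakly reduced in the sense of Definition \ref{reduced singularity}. Condition (1), the pull-back formula for the tangent sheaf under a log resolution of $\jc_{\tilde{\fc}}$, follows from the Seidenberg bullet asserting that point blow-ups of reduced singularities preserve the identity $T_{\hat{\fc}}=\rho^*T_{\tilde{\fc}}$, since a log resolution is a composition of such blow-ups. Condition (2) is checked locally at each singular point: at a non-degenerate singularity, where $|v|^2_\omega\sim|z_1|^2+|z_2|^2$, a polar-coordinate computation shows that $(|z_1|^2+|z_2|^2)^{-1}$ is integrable on $\cb^2$ near the origin; at a degenerate singularity of type $k$, integrating $(|z_1|^2+|z_2|^{2k})^{-1}$ first in $z_1$ yields at worst a logarithmic singularity in $|z_2|$, which is still locally integrable. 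Thus $\ic(\jc_{\tilde{\fc}})=\oc_{\tilde{X}}$.

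Writing $K_{\tilde{X}}=\pi^*K_X+E$ with $E$ an effective exceptional divisor, and observing that the support of $E$ cannot contain the image of the Zariski dense curve $\tilde{f}$, the proximity and counting argument used in Lemma \ref{bimero def} and Theorem \ref{nef} gives $\langle T[\tilde{f}],E\rangle\ge 0$, so
$$
\langle T[\tilde{f}],K_{\tilde{X}}\rangle\;\ge\;\langle T[f],K_X\rangle\;>\;0.
$$
Theorem \ref{negative normal} therefore applies to $(\tilde{X},\tilde{\fc})$ and $\tilde{f}$, producing a further birational model $(\hat{X},\hat{\fc})$ and a lift $\hat{f}$, still Zariski dense, satisfying $\langle T[\hat{f}],c_1(N_{\hat{\fc}})\rangle<0$. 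On the other hand, Brunella's Theorem \ref{brunella} applied to the surface $\hat{X}$ and the Zariski dense entire curve $\hat{f}$ tangent to $\hat{\fc}$ yields $\langle T[\hat{f}],c_1(N_{\hat{\fc}})\rangle\ge 0$, a direct contradiction. The only substantive work is the $L^2$-verification in the second paragraph; everything else is absorbed into the already-established Theorems \ref{negative normal} and \ref{brunella}, the philosophical point being that Brunella's inequality is what allows one to dispense with McQuillan's diophantine approximation.
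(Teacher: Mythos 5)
Your proof is correct and follows essentially the same route as the paper's: Seidenberg reduction to reduced singularities, verification of weak reducedness, and then the contradiction between Theorem~\ref{negative normal} and Brunella's Theorem~\ref{brunella}. You are in fact more careful than the paper on two small points the paper leaves implicit --- the explicit local $L^2$-integrability of $(|z_1|^2+|z_2|^{2k})^{-1}$, and the check that $\langle T[\tilde f], K_{\tilde X}\rangle \geq \langle T[f],K_X\rangle > 0$ after blowing up, which is indeed needed for Theorem~\ref{negative normal} to apply to the new model; the only slight imprecision is the claim that the image of $\tilde f$ ``avoids'' $\text{Sing}(\tilde\fc)$, when what is needed and what holds is merely that the image is not contained in it.
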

\begin{proof}
Assume that we have a Zariski dense entire curve $f:\cb\rightarrow X$ tangent 
to $\fc$. We proceed by contradiction.

By Seidenberg's theorem there is a sequence of blow-ups 
$\pi:\widetilde{X}\rightarrow X$ such that the singularities of 
$\widetilde{\fc}=\pi^{-1}\fc$ are reduced, and the lift $\widetilde{f}$ of $f$ 
to $\widetilde{X}$ is still Zariski dense. Thus 
$\widetilde{\jc}$ is weakly reduced and by Theorem \ref{negative normal} we have
$$
\langle T[\hat{f}], c_1(N_{\hat{\fc}})\rangle<0
$$
for some birational pair $(\hat{X},\hat{\fc})$ which is obtained by resolving 
the ideal $\jc_{\widetilde{\fc}}$.
However, Theorem \ref{brunella} tells us that 
$$
\langle T[\hat{f}], c_1(N_{\hat{\fc}})\rangle\geq0
$$
and we get a contradiction.
\end{proof}

\subsection{A generalization of McQuillan's theorem}

Thanks to the above theorems, we can obtain certain generalizations of McQuillan's theorem to higher dimensional 
manifolds, under some assumptions for the foliations. We start with some relevant definitions and properties (and refer to \cite{CCS97} and \cite{Tom97} for further details).

\begin{dfn}
Let $\fc$ be a foliation by curves on a $n$-dimensional
complex manifold $X$. We say that $p_0\in \text{Sing}(\fc)$ is an \emph{absolutely
isolated singularity} (A.I.S.) of $\fc$ if and only if the following properties
are satisfied:
\begin{enumerate}
\item $p_0$ is an isolated singularity,
\item If we consider an arbitrary sequence of blowing-up's 
$$
(X,\fc)\xleftarrow{\pi_1} 
(X_1,\fc_1)\xleftarrow{\pi_2}\cdots\xleftarrow{\pi_{n}} (X_n,\fc_n)
$$
where the center of each blow-up $\pi_i$ is a singular point $p_{i-1}\in 
\text{Sing}\fc_{i-1}$, then $\# \text{Sing}(\fc_n)<+\infty$.
\end{enumerate}
\end{dfn}

Since locally the foliation $\fc$ is generated by a holomorphic vector field 
$v=\sum_{i=1}^{n}a_i\frac{\d}{\d z_i}$, we can define the \emph{algebraic 
multiplicity} $m_p(\fc)$ of $\fc$ at $p$ to be the minimum of the vanishing
orders $\text{ord}_p(a_i)$.
Recall that the linear part of $\fc$ at $p$ is defined by
$$
\mathcal{L}_v:m_p/m^2_p\rightarrow m_p/m^2_p.
$$
A singular point $p\in \text{Sing}\fc$ is called \emph{reduced} if $m_p(\fc)$ = 1 and 
the linear part of $\fc$ at $p$ has at least
one non-zero eigenvalue.   

We shall say that $p\in \text{Sing}\fc$ is a \emph{non-dicritical singularity} 
of $\fc$ if $\pi^{-1}(p)$ is invariant
by $\fc$, where $\pi$ is the blow-up of $p$. Otherwise $p$ is called a 
\emph{dicritical singularity}. 

Let $(X,\fc,D)$ be a 1-foliated triple. We assume that all singularities of $\fc$ lie on $D$ (this can be achieved after we take a log resolution of $\jc_{\fc}$). Fix a point $p\in \text{Sing}(\fc)$ and denote by $e=e(D,p)$ the number of irreducible components of $D$ through $p$. Since $\text{Sing}(\fc)\subset D$, we have $e\geq 1$. Then the vector fields which generate $\fc$ are given by
$$
v=\sum_{j=1}^{e}z_ja_j\frac{\d}{\d z_j}+\sum_{i=e+1}^{n}a_i\frac{\d}{\d z_i},
$$
where $z_1z_2\ldots z_e=0$ is the local equation of $D$ at $p$.

\begin{dfn}\label{simple singularity}
	Assume that $e=1$. Then $p$ is a \emph{simple point} iff one of the following two possibilities occurs:
	\begin{enumerate}[(A)]
		\item $a_1(0)=0$, the curve $(z_2=\ldots=z_n=0)$ is invariant by $\fc$ (up to an adequate formal choice of coordinates) and the linear part $\mathcal{L}_{v|_D}$ of $v|_D$ is of rank $n-1$.
		\item $a_1(0)=\lambda\neq 0$, the multiplicity of the eigenvalue $\lambda$ is one and if $\mu$ is another eigenvalue of the linear part of $\mathcal{L}_v$, then $\frac{\mu}{\lambda}\notin \mathbb{Q}_+$.
	\end{enumerate}
	
	Assume that $e\geq 2$. Then $p$ is a \emph{simple corner} iff (up to a reordering of $(z_1,\ldots,z_n)$), we have $a_1(0)=\lambda\neq 0, a_2(0)=\mu$ and $\frac{\mu}{\lambda}\notin \mathbb{Q}_+$.
	
	We say that $p$ is a \emph{simple singularity} iff it is a simple point or a simple corner.
\end{dfn}

The simple singularities are ``stable" under blowing-up:

\begin{proposition}\label{stable simple}
	Assume that $p$ is a simple singularity of the 1-foliated triple $(X,\fc,D)$. Let $\mu:\widetilde{X}\rightarrow X$ be the blow-up of $X$ with the center $p$, $\widetilde{D}:=\mu^{-1}(D\cup \{p\})$ and $\widetilde{\fc}$ be the induced foliation. Then:
	\begin{enumerate}[(a)]
		\item Each irreducible component of $\widetilde{D}$ is invariant by $\widetilde{\fc}$.
		\item  If $p'\in \widetilde{\fc}_s\cap \mu^{-1}(p)$, then $p'$ is also a simple singularity of $\widetilde{\fc}$ with respect to the induced 1-foliated triple $(\widetilde{X},\widetilde{\fc},\widetilde{D})$.  More precisely:
		\begin{enumerate}[(b-1)]
			\item if $p$ is a simple point, there is exactly one simple point $p'\in \widetilde{\fc}\cap \mu^{-1}(p)$. The other points in $\widetilde{\fc}\cap \mu^{-1}(p)$ are simple corners. Moreover, $p$ and $p'$ have the same type $(A)$ or $(B)$ of Definition \ref{simple singularity}.
			\item If $p$ is a simple corner, then all points in $\text{Sing}(\fc)\cap \mu^{-1}(p)$ are simple corners.
		\end{enumerate}
	\end{enumerate} 
\end{proposition}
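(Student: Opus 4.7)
My plan reduces the claim to a local computation at $p$: work in an analytic chart about $p=0$ with $D=\{z_1\cdots z_e=0\}$ and local generator
\[
v \;=\; \sum_{j=1}^{e} z_j a_j(z)\frac{\partial}{\partial z_j} + \sum_{i=e+1}^{n} a_i(z)\frac{\partial}{\partial z_i}
\]
of $\fc$; in type (A) also apply the formal change of variables making $\{z_2=\cdots=z_n=0\}$ a formal invariant curve. The blow-up $\mu:\widetilde X\to X$ of the origin is covered by the standard affine charts $\mathcal U_k$ ($1\le k\le n$) in which $z_k$ is kept as coordinate and $w_i':=z_i/z_k$ for $i\neq k$, so $E\cap\mathcal U_k=\{z_k=0\}$. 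I will, in each chart, apply the chain rule to compute $\mu^* v$, divide out the maximal common power of $z_k$ to extract a local generator $\widetilde v^{(k)}$ of $\widetilde\fc$, and then read off from $\widetilde v^{(k)}$ both the invariance of each component of $\widetilde D$ and the simple-singularity classification of every new singular point on $E$ against Definition \ref{simple singularity}. The case analysis follows the definition: (i) $e=1$ with $p$ of type (A); (ii) $e=1$ with $p$ of type (B); (iii) $e\ge 2$ with $p$ a simple corner.

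For part (a), the strict transforms of the components of $D$ retain invariance because the factor $z_j$ in front of $\partial/\partial z_j$ survives pullback, so only the invariance of $E$ is at stake. In each case, the singular hypothesis at $p$ forces $a_i(0)=0$ for all $i\ge e+1$ (otherwise $v(p)\neq0$), while the logarithmic factors $z_j$ for $j\le e$ already provide the required vanishing along $E$. A direct computation in each $\mathcal U_k$ shows that these vanishing orders combine in such a way that, after dividing $\mu^* v$ by the largest common power of $z_k$, the coefficient of $\partial/\partial z_k$ in $\widetilde v^{(k)}$ still vanishes on $\{z_k=0\}$; hence $E$ is $\widetilde\fc$-invariant.

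For part (b), I locate the singularities of $\widetilde v^{(k)}$ on $E\cap\mathcal U_k$ and compute their linearizations. In case (A), the principal chart $\mathcal U_1$ (containing the proper transform of the invariant curve) yields a unique singularity at the origin whose Jacobian has the block form $\mathrm{diag}(0,C)$ with $C=(\partial a_i/\partial z_j)_{i,j\ge 2}$ invertible, and the proper transform of the invariant curve remains invariant; so $p'$ is again a type-(A) simple point with $e(\widetilde D,p')=1$. The secondary charts $\mathcal U_k$ with $k\ge 2$ yield singularities on $E\cap\{w_1'=0\}$, giving $e(\widetilde D,p')=2$, and the nonzero spectrum of $C$ furnishes the distinguished eigenvalue making each such $p'$ a simple corner. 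In cases (B) and (iii), the new eigenvalues of $\mathcal L_{\widetilde v^{(k)}}$ at any $p'\in E\cap\mathrm{Sing}(\widetilde\fc)$ are the old nonzero eigenvalues together with differences such as $\mu-\lambda$ or $\lambda-\mu$, which is the standard effect of a blow-up on a diagonalizable linear part along two coordinate axes. Since $\mu/\lambda\notin\mathbb Q_+$ implies both $(\mu-\lambda)/\lambda=\mu/\lambda-1\notin\mathbb Q_+$ and, symmetrically, $(\lambda-\mu)/\mu\notin\mathbb Q_+$, the non-resonance condition is inherited at every new corner on $E$.

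The main obstacle is the eigenvalue bookkeeping in case (iii): one must examine each chart $\mathcal U_k$ separately, identify which coordinate hyperplanes belong to $\widetilde D$ at any new singularity $p'\in E$ and in what cyclic arrangement, and verify that the distinguished pair of divisors through $p'$ carries a non-resonant pair of eigenvalues of $\mathcal L_{\widetilde v^{(k)}}$. Chart by chart the verification is elementary once the pullback is written out, but it requires careful tracking of which eigenvalue is attached to which divisor before one can conclude that every $p'\in\mathrm{Sing}(\widetilde\fc)\cap\mu^{-1}(p)$ is again a simple corner.
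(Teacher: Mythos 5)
The paper does not supply a proof of Proposition~\ref{stable simple}: it is stated as part of the background quoted from \cite{CCS97} and \cite{Tom97}, to which the paper explicitly refers ``for further details.'' So there is no in-text argument to compare against. Your sketch does take the expected route, namely the chart-by-chart pullback computation, and the eigenvalue identity you invoke — that for a diagonal linear part $\mathrm{diag}(\lambda_1,\ldots,\lambda_n)$ the blow-up produces in chart~$k$ the linear part $\mathrm{diag}\bigl(\lambda_k,\{\lambda_i-\lambda_k\}_{i\neq k}\bigr)$, so that $\mu/\lambda\in\mathbb Q_+$ would force $(\lambda,\mu-\lambda)$ or $(\mu,\lambda-\mu)$ to be a $\mathbb Q_+$-ratio pair — is precisely the engine of the argument in those references.

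As written, though, your outline leaves two genuine gaps that a referee would flag. First, for part~(a) you assert that ``these vanishing orders combine'' so that the $\partial/\partial z_k$-coefficient of $\widetilde v^{(k)}$ vanishes on $E$; what is really needed is non-dicriticity of $p$, i.e.\ that $\mathcal L_v$ is not a scalar multiple of the radial vector field. This does follow from Definition~\ref{simple singularity} (in type~(B) and at a simple corner, $\mu/\lambda\notin\mathbb Q_+$ forbids $\mu=\lambda$; in type~(A), $\lambda_1=0$ while the restriction to $D$ is invertible), but it must be said, since a dicritical blow-up would make $E$ non-invariant. Second, and more seriously, you declare the ``eigenvalue bookkeeping'' in the simple-corner case the main obstacle and then do not carry it out. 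This bookkeeping is not just routine: one must determine, at each new singularity $p'\in E$, exactly which two components of $\widetilde D$ pass through $p'$ and which eigenvalues of $\mathcal L_{\widetilde v^{(k)}}$ they carry, since Definition~\ref{simple singularity} demands non-resonance of the \emph{distinguished} pair attached to those components, not of an arbitrary pair. Finally, your enumeration of the singularities on $E$ (``there is exactly one simple point $p'$'' / ``all points are simple corners'') tacitly assumes isolation; when $\mathcal L_v$ has a repeated nonzero eigenvalue this is not obvious from the linear part alone, and the cited literature uses the Jordan-block normal form of Proposition~\ref{tome linear} (hence ultimately the A.I.S.\ hypothesis) to guarantee it. Your proposal is a correct blueprint, but it is a sketch rather than a proof.
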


In \cite{Tom97} and \cite{CCS97}, the following resolution theorem for absolutely isolated singularities has been proved.

\begin{thm}\label{resolve ais}
	Let $\fc$ be a foliation by curves on the $n$-dimensional complex manifold $X$, 
	such that the singularities $\text{Sing}(\fc)$ of the foliation $\fc$ is a set of absolutely isolated singularities. Then 
	there exists a finite sequence of blow-up's 
	$$
	(X,\fc)\xleftarrow{\pi_1} 
	(X_1,\fc_1)\xleftarrow{\pi_2}\cdots\xleftarrow{\pi_{n}} (X_n,\fc_n)
	$$
	satisfying the following property:
	\begin{enumerate}
		\item the center of each blow-up $\pi_i$ is a singular point $p_{i-1}\in \text{Sing}(\fc)$.
		\item $(X_n,\fc_n,D_n)$ is a 1-foliated triple with only simple singularities (which are, of course absolutely isolated singularities).
		\item All the singularities of $\fc_n$ are non-dicritical.
	\end{enumerate} 
\end{thm}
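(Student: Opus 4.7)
The plan is to prove the theorem by induction on a lexicographic numerical invariant attached to $\text{Sing}(\fc)$. Since the singular locus is finite by the AIS hypothesis, a natural first candidate is $(M(\fc), N(\fc))$, where $M(\fc) := \max_p m_p(\fc)$ is the maximal algebraic multiplicity and $N(\fc)$ counts the singular points realizing this maximum. First I would pick $p_0$ realizing $M(\fc)$ and blow it up to obtain $\pi : (X_1, \fc_1) \to (X, \fc)$; by the AIS hypothesis $\text{Sing}(\fc_1)$ remains finite, and expanding the local generator $v = \sum a_i \d/\d z_i$ in the blow-up charts lets one read off the leading behavior of $\fc_1$ on the exceptional divisor $E := \pi^{-1}(p_0)$ and on its complement.

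The analysis then splits into the non-dicritical and dicritical cases. If $p_0$ is non-dicritical, then $E$ is $\fc_1$-invariant, and a direct chart computation shows that each new singular point on $E$ has algebraic multiplicity strictly less than $M(\fc)$, so the lexicographic invariant strictly decreases. If $p_0$ is dicritical, then $E$ is generically transverse to $\fc_1$ and new singularities on $E$ arise only where the leading homogeneous term of $v$ degenerates; again these can be shown to have strictly smaller multiplicity, but one must also verify that repeated iteration eventually leaves only non-dicritical singularities. The AIS hypothesis is essential here, since it rules out pathological cascades of nested dicritical blow-ups producing infinitely many new singular points.

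Once $M(\fc)$ has been reduced to $1$ everywhere, the remaining task is to achieve simplicity in the sense of Definition \ref{simple singularity}. I would then introduce a secondary invariant measuring the Jordan-block structure of $\mathcal{L}_v$ together with a resonance defect (whether nonzero eigenvalue ratios lie in $\mathbb{Q}_+$), and verify via the elementary transformation rules for linear parts under blow-up that this secondary invariant strictly decreases at each offending point. Proposition \ref{stable simple} then guarantees that simple singularities, once produced, are never destroyed, so the reduction terminates and the final model $(X_n, \fc_n, D_n)$ carries only simple non-dicritical singularities.

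The main obstacle, and the reason the theorem is delicate rather than routine, is that no single monotone quantity suffices: a blow-up may lower algebraic multiplicity while temporarily raising resonance complexity, or vice versa. The working strategy is therefore a multi-tier lexicographic invariant of shape $(M(\fc), D(\fc), R(\fc), N(\fc))$ tracking multiplicity, dicriticalness defect, resonance defect, and counts of offending points, and the proofs in \cite{CCS97} and \cite{Tom97} carry out the local case-by-case coordinate analysis needed to verify its strict decrease. The most subtle step I foresee is the reduction of resonances in the presence of several invariant divisors (the simple-corner configuration), where one must preserve already-simple corners while reducing neighboring non-simple ones, and where the interplay between the eigenvalue spectrum and the normal crossing structure must be tracked with care.
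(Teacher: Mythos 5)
The paper gives no proof of this statement: it is presented as a theorem of \cite{Tom97} and \cite{CCS97} and is simply quoted, so there is no in-paper argument against which to compare your sketch. On its own merits, however, your proposal contains a concrete false step. You assert that after blowing up a non-dicritical singular point every new singularity on the exceptional divisor has algebraic multiplicity strictly smaller than $M(\fc)$. This fails already in dimension two. For $v=z_2\,\partial/\partial z_1+z_1^{3}\,\partial/\partial z_2$ the origin is an isolated, non-dicritical singular point of algebraic multiplicity $1$ (the tangent cone is $z_2^2\not\equiv 0$), yet in the chart $z_2=z_1w$ the strict transform is $v_1=z_1w\,\partial/\partial z_1+(z_1^2-w^2)\,\partial/\partial w$, which has algebraic multiplicity $2$ at the origin. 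Multiplicity can therefore \emph{increase} under a single blow-up, which is precisely why Seidenberg's theorem (and a fortiori its higher-dimensional AIS analogue) is non-trivial: the actual proofs hinge on a more delicate, hypothesis-specific decreasing invariant, not on naive algebraic multiplicity.

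You do acknowledge at the end that no single monotone quantity suffices, and you propose a lexicographic invariant $(M,D,R,N)$ tracking multiplicity, dicriticalness, resonance and counts, but you neither specify its components nor establish strict decrease for any of them; instead you refer the reader to \cite{CCS97} and \cite{Tom97} for the case analysis. As written, your proposal is thus a plausibility narrative for the cited proof rather than an independent argument, and in the one place where it commits to a concrete mechanism (the multiplicity drop in the non-dicritical case) that mechanism is wrong. Repairing it would require identifying the correct invariant used in the references and proving its strict decrease under the permitted blow-ups, at which point the outline would effectively reproduce the existing literature — which is exactly what the paper itself does by citing the theorem without proof.
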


In \cite{Tom97}, the author also gave a classification of the linear part $\mathcal{L}_v:m_p/m^2_p\rightarrow m_p/m^2_p$ of reduced, non-dicritical A.I.S.

\begin{proposition}\label{tome linear}
	Let $p$ be a reduced, non-dicritical A.I.S of the foliation $\fc$, then its linear part can be written 
$$\mathcal{L}_v=\text{diag}[M(\lambda_1),\ldots,M(\lambda_s)]$$ 
	where $M(\lambda_k)=\lambda_kI_{r_k}+R_{r_k}(1)$ for  $1\leq k\leq s$. (Here $I_{r_k}$ is the identity matrix of $\cb^{r_k \times r_k}$ and $R_{r_k}(1)\in \cb^{r_k \times r_k}$ is the upper triangle matrix of order $1$).
	
	Moreover, if we denote by $\widetilde{\fc}$ the induced foliation of $\fc$ after the blow-up at $p$, then we have
	$$
	\text{Sing}(\widetilde{\fc})\cap \mu^{-1}\{p\}=\{\widetilde{p}_1,\ldots,\widetilde{p}_s\}
	$$
	where $\widetilde{p}_l$ is the zero at the charts $\widetilde{z}_j=z_j, \widetilde{z}_i=z_i/z_j$ for $i=1,\ldots,n$ and $j=r_1+\ldots+r_{l-1}+1$.
\end{proposition}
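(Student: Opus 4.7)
The plan is to put $\mathcal{L}_v$ in Jordan normal form by a linear change of coordinates, and then to use the absolutely isolated hypothesis to force each eigenvalue of $\mathcal{L}_v$ to carry exactly one Jordan block, which is the precise content of the decomposition $\mathcal{L}_v=\mathrm{diag}[M(\lambda_1),\ldots,M(\lambda_s)]$ with $M(\lambda_k)=\lambda_k I_{r_k}+R_{r_k}(1)$. The second assertion about the locations of the singular points on $\mu^{-1}(p)$ will then fall out from this basis, since the unique eigenvector associated to $\lambda_l$ is the standard vector $e_j$ with $j=r_1+\cdots+r_{l-1}+1$, whose projectivization is the origin of the chart indexed by this $j$.

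First I would compute the effect of the blow-up $\mu\colon\widetilde X\to X$ at $p$ on a generator $v=\sum_i a_i\,\partial/\partial z_i$. In the affine chart $\widetilde z_j=z_j$, $\widetilde z_i=z_i/z_j$ for $i\neq j$, a direct substitution gives
\begin{equation}\nonumber
\mu^*v\;=\;a_j\frac{\d}{\d\widetilde z_j}\;+\;\sum_{i\neq j}\frac{a_i-\widetilde z_i a_j}{\widetilde z_j}\frac{\d}{\d\widetilde z_i}.
\end{equation}
Because $m_p(\fc)=1$ and $p$ is non-dicritical, this expression generates $\widetilde\fc$ without any further division by $\widetilde z_j$, and its $\partial/\partial\widetilde z_j$-coefficient vanishes on $E=\{\widetilde z_j=0\}$, so that $E$ is $\widetilde\fc$-invariant. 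Restricting to $E$ and retaining only the linear part of $v$, the induced vector field on $E\simeq\mathbb{P}^{n-1}$ is precisely the one associated to the projective action of $\mathcal{L}_v$; hence $\mathrm{Sing}(\widetilde\fc)\cap E$ is exactly the fixed-point set of this projective action, namely the disjoint union of the projectivizations $\mathbb{P}(E_\lambda)$ of the geometric eigenspaces of $\mathcal{L}_v$.

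Now I invoke the A.I.S.\ hypothesis: the finiteness of $\mathrm{Sing}(\widetilde\fc)\cap E$ forces each $E_\lambda$ to be one-dimensional, which in Jordan language means that each eigenvalue is covered by a single block $M(\lambda_k)=\lambda_kI_{r_k}+R_{r_k}(1)$ with the $\lambda_k$ pairwise distinct; after reordering the blocks we obtain the claimed block-diagonal form. For the \emph{Moreover} statement, the unique eigenline for $\lambda_l$ in this basis is $\cb\cdot e_j$ with $j=r_1+\cdots+r_{l-1}+1$, whose projective class in the chart $\widetilde z_j=z_j$ is the origin of the remaining coordinates, i.e.\ the point $\widetilde p_l$. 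Substituting $\widetilde z_i=0$ ($i\neq j$) into the formula for $\mu^*v$ above and reading off the lowest order terms from the structure of $M(\lambda_l)$ immediately confirms that $\widetilde p_l$ is a zero of the proper transform, and accounts for all singular points on $\mu^{-1}(p)$ arising from block $l$.

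The main technical obstacle is justifying that the non-dicritical hypothesis really does ensure that the restriction $\widetilde v|_E$ is the honest linearized action on $\mathbb{P}^{n-1}$, and not some divided or degenerate form. This reduces to the observation that $m_p(\fc)=1$ makes at least one of the quotients $(a_i-\widetilde z_i a_j)/\widetilde z_j$ non-vanishing at a generic point of $E$, so that no additional factor of $\widetilde z_j$ can be pulled out of $\mu^*v$; the non-dicritical assumption then guarantees that the coefficient $a_j$ nevertheless vanishes on $E$, so the normal and tangential pieces of $\mu^*v$ decouple exactly as needed to identify $\mathrm{Sing}(\widetilde\fc)\cap E$ with the spectrum of $\mathcal{L}_v$ acting projectively.
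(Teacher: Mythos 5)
The paper states this proposition without proof, crediting it to \cite{Tom97}, so there is no internal argument to compare yours against. Your proof is correct and is the expected one: you correctly compute that the restriction of $\mu^*v$ to the exceptional divisor $E$ reduces to the vector field on $\mathbb{P}^{n-1}$ induced by $\mathcal{L}_v$, identify $\mathrm{Sing}(\widetilde\fc)\cap E$ with the fixed-point set of the projectivization $\mathbb{P}(\mathcal{L}_v)$, and then use A.I.S.\ finiteness to force every geometric eigenspace to be one-dimensional, which is precisely the ``one Jordan block per eigenvalue'' decomposition $\mathcal{L}_v=\mathrm{diag}[M(\lambda_1),\ldots,M(\lambda_s)]$ with the $\lambda_k$ pairwise distinct; the \emph{Moreover} part is then the observation that the eigenline of the $l$-th block is $\cb\cdot e_j$, $j=r_1+\cdots+r_{l-1}+1$, whose projective class is the chart origin $\widetilde p_l$.

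The one imprecision is in your last paragraph, and it has the implication running backwards. The vanishing of the $\partial/\partial\widetilde z_j$-coefficient of $\mu^*v$ on $E$ is automatic from $m_p(\fc)\geq 1$, since $a_j\circ\mu$ acquires a factor of $\widetilde z_j$; it has nothing to do with non-dicriticality. What non-dicriticality (together with $m_p(\fc)=1$) actually excludes is the case where $\mathcal{L}_v$ is a scalar matrix: in that case every restricted coefficient $(a_i-\widetilde z_ia_j)/\widetilde z_j|_E$ vanishes identically, so one must divide $\mu^*v$ by $\widetilde z_j$ to get a generator of $\widetilde\fc$, and then the $\partial/\partial\widetilde z_j$-component becomes generically non-zero on $E$, i.e.\ $E$ is not $\widetilde\fc$-invariant. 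The correct chain is therefore: $m_p=1$ and non-dicritical $\Rightarrow$ $\mathcal{L}_v$ is non-scalar $\Rightarrow$ $\mu^*v$ is already the local generator of $\widetilde\fc$ and restricts on $E$ to the honest projectivized linear action. With that small rewording your argument is complete.
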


\begin{dfn}
	Let $\fc$ be a foliation by curves defined on some open domain $U\subset \cb^n$. A \emph{separatrix} of the singular holomorphic foliation $\fc$ at the point $p\in \text{Sing}(\fc)$ is a local leaf $L\subset (U,p)\setminus \text{Sing}(\fc)$ whose closure $L\cup p$ is a germ of analytic curve.
\end{dfn}

We can summarize Proposition \ref{stable simple} and Proposition \ref{tome linear} to state a proposition in the following final form:
\begin{proposition}\label{summary ais}
	Let $(X,\fc,D)$ be a 1-foliated triple. Let $p$ be an absolutely isolated and simple singularity of $\fc$ and let $w_1w_2\ldots w_e=0$ be the local equation of $D$ at $p$. Then we can find a new coordinate system $(z_1,\ldots,z_n)$ such that the linear part $\mathcal{L}_v$ of a generator of $\fc$ at $p$ can be written in the following Jordan form:
	$$
\mathcal{L}_v=\sum_{i=1}^{s}\lambda_i z_i\frac{\d}{\d z_i}+\sum_{j=1}^{k}\sum_{p=s+r_1+\ldots+r_{j-1}+1}^{s+r_1+\ldots+r_j-1}(\lambda_{s+j}z_p+z_{p+1})\frac{\d}{\d z_p}+\lambda_{s+j}z_{s+r_1+\ldots+r_j}\frac{\d}{\d z_{s+r_1+\ldots+r_j}}
	$$
	where $r_j\geq 0$ are the sizes of the Jordan blocks, $z_1 z_2\ldots z_e=0$ (of course, $e\leq s$) is the local equation of $D$ at $p$. $\lambda_j\neq 0$ for $j=2,\ldots,s+k$; $\frac{\lambda_i}{\lambda_j}\notin \mathbb{Q}_+$ for $i\neq j,i,j=1,\ldots,s+k,j\neq 1$. Here, the local generator of $\fc$ is
$$
	v=\sum_{j=1}^{e}z_ja_j\frac{\d}{\d z_j}+\sum_{i=e+1}^{n}a_i\frac{\d}{\d z_i},
	$$
and $a_j(0)=\lambda_j$ for $j=1,\ldots,e$.	
If we denote by $\widetilde{\fc}$ the foliation induced by $\fc$ after taking the blow-up at $p$, then we have
		$$
		\text{Sing}(\widetilde{\fc})\cap E=\{\widetilde{p}_1,\ldots,\widetilde{p}_{s+k}\}
		$$
		where $\widetilde{p}_l$ is the zero at the charts $\widetilde{z}_j=z_j, \widetilde{z}_i=z_i/z_j$ for $i=1,\ldots,n$ and $j=l$ if $l\leq s$ or $j=s+r_1+\ldots+r_{l-1}+1$ if $l=s+m$; $E$ is the exceptional divisor of the blow-up. Moreover, at each $\widetilde{p}_i$ the linear part $\mathcal{L}_{\widetilde{v}}$ of a generator of $\widetilde{\fc}$ is similar to that $\mathcal{L}_v$, since simple singularities are stable under blowing-up. 
\end{proposition}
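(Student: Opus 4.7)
The plan is to derive the explicit Jordan form for $\mathcal{L}_v$ from Proposition \ref{tome linear}, regroup the Jordan blocks by size, and then read off the blow-up data from the second half of that same proposition, invoking Proposition \ref{stable simple} to propagate the simple structure. First I would verify that any simple absolutely isolated singularity is automatically reduced and non-dicritical: in type $(A)$ the rank-$(n-1)$ condition on $\mathcal{L}_{v|_D}$ forces algebraic multiplicity $1$ together with at least one non-zero eigenvalue, while in type $(B)$ and the simple corner case $a_1(0) = \lambda \neq 0$ yields both properties directly; non-dicriticality is built into the conclusions of Proposition \ref{stable simple}. Consequently Proposition \ref{tome linear} applies and furnishes a Jordan decomposition $\mathcal{L}_v = \text{diag}[M(\mu_1),\ldots,M(\mu_t)]$.

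I would then separate the blocks by size: the size-one blocks yield the diagonal summand $\sum_{i=1}^s \lambda_i z_i \frac{\d}{\d z_i}$, and the remaining $k$ blocks of sizes $r_1,\ldots,r_k \geq 2$ yield the double-summation term in the statement. Because every irreducible component of $D$ through $p$ is $\fc$-invariant, its conormal direction is an eigendirection of $\mathcal{L}_v$, so after a linear change of coordinates I can arrange that the first $e$ axes define $D$ locally and lie among the $s$ size-one blocks, giving $a_j(0) = \lambda_j$ for $1 \leq j \leq e$.

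The non-vanishing of $\lambda_j$ for $j \geq 2$ and the ratio constraint $\lambda_i/\lambda_j \notin \mathbb{Q}_+$ for distinct $i,j$ with $j \neq 1$ would then need to be checked. In case $(A)$, where $e=1$ and $\lambda_1 = 0$, the rank-$(n-1)$ condition on $\mathcal{L}_{v|_D}$ directly excludes any additional zero eigenvalue; in cases $(B)$ and simple corner the definition already furnishes $\lambda_1 \neq 0$ together with the pairwise ratio condition for the distinguished pair. To promote this pairwise condition to an \emph{all-pairs} statement, I would use the absolute isolation of the singularity together with Proposition \ref{stable simple}: if some ratio $\lambda_i/\lambda_j$ were a positive rational (or some $\lambda_j$ with $j \geq 2$ vanished), then the affine chart indexed by $j$ provided by the second half of Proposition \ref{tome linear} would produce a point $\tilde{p}$ on $\mu^{-1}(p) \cap \text{Sing}(\widetilde{\fc})$ whose linear part violates the simple singularity conditions at $\tilde{p}$, contradicting the stability assertion (b) of Proposition \ref{stable simple} that every such $\tilde{p}$ must again be simple.

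Finally, for the blow-up description, I would invoke the last assertion of Proposition \ref{tome linear}: each Jordan block of $\mathcal{L}_v$ produces exactly one singular point of $\widetilde{\fc}$ on the exceptional divisor $E$, located in the affine chart $\widetilde{z}_j = z_j,\ \widetilde{z}_i = z_i/z_j$ with $j = l$ for $l \leq s$ (the size-one blocks) and $j = s + r_1 + \cdots + r_{l-1} + 1$ for $l = s+m$ with $1 \leq m \leq k$ (the size-$\geq 2$ blocks). Proposition \ref{stable simple} then tells us each $\widetilde{p}_l$ is a simple singularity of the induced triple $(\widetilde{X},\widetilde{\fc},\widetilde{D})$, so running the first part of the argument at $\widetilde{p}_l$ yields a Jordan form for $\mathcal{L}_{\widetilde{v}}$ of exactly the same shape as $\mathcal{L}_v$. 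The principal obstacle I anticipate is bootstrapping the pairwise ratio condition of Definition \ref{simple singularity} to the all-pairs version claimed here; this forces a combined use of absolute isolation and an induction on the blow-up sequence, both made quantitative enough to rule out every potential $\mathbb{Q}_+$-resonance among the $\lambda_i$.
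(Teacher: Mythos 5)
The paper offers no actual proof of Proposition~\ref{summary ais}: it is announced as a ``summary'' of Propositions~\ref{stable simple} and~\ref{tome linear}, with the implicit understanding that the reader combine them. Your proposal follows that same route, so the architecture is faithful. The bulk of it is unobjectionable: simple singularities are reduced and non-dicritical, so Proposition~\ref{tome linear} gives the Jordan decomposition; sorting blocks by size and reading off the location of singularities from the charts is correct.

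The gap is in the step where you promote the \emph{pairwise} ratio condition of Definition~\ref{simple singularity} to the \emph{all-pairs} condition ``$\lambda_i/\lambda_j\notin\mathbb{Q}_+$ for all $i\neq j$, $j\neq 1$.'' You argue that if $\lambda_i/\lambda_j\in\mathbb{Q}_+$ for some $i,j\geq 2$ then the post-blow-up point $\widetilde{p}_j$ would fail to be simple, contradicting Proposition~\ref{stable simple}. This does not work. The simple-corner condition only constrains the two eigenvalues attached to components of $\widetilde{D}$ through $\widetilde{p}_j$ (namely $\lambda_j$ for $E$ and the $\lambda_i-\lambda_j$ for the strict transforms of $D$); it is completely indifferent to the remaining eigenvalues. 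Concretely, take $n=3$, $D=\{z_1=0\}$, $v=z_1\frac{\d}{\d z_1}-2z_2\frac{\d}{\d z_2}-4z_3\frac{\d}{\d z_3}$. Here $\lambda_1=1$, $\lambda_2=-2$, $\lambda_3=-4$, so $p=0$ is a simple point of type $(B)$ (the ratios $-2/1$ and $-4/1$ lie outside $\mathbb{Q}_+$), and a direct computation shows each point on the exceptional divisor after blow-up is again simple (eigenvalues $(1,-3,-5)$, $(3,-2,-2)$, $(5,2,-4)$, with the distinguished ratios $-3/1$, $-2/3$, $-4/5$ all outside $\mathbb{Q}_+$), so $p$ is absolutely isolated. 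Yet $\lambda_2/\lambda_3=1/2\in\mathbb{Q}_+$, which violates the all-pairs claim. So the step as written is broken, and the all-pairs reading of the ratio condition is in fact too strong a statement to be derivable from absolute isolation plus simplicity alone; only the ratios involving eigenvalues of the $D$-directions are controlled.

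A secondary, smaller issue: your assertion that the $D$-invariant hyperplanes automatically land in the size-one part of the Jordan form is not justified. Invariance of $\{w_j=0\}$ only makes $w_j$ an eigenvector of the transpose of $\mathcal{L}_v$, i.e.\ the \emph{bottom} of some Jordan chain; that chain need not have length one. To get the claimed normalization with $e\leq s$ you must explain why the $D$-coordinates can be placed in size-one blocks, which requires a separate argument (or is again a place where the proposition is being stated informally).
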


As a consequence of Proposition \ref{summary ais} we get the following theorem:
\begin{thm}\label{separatrix}
	With the above notation, let $p$ be an absolutely isolated and simple singularity of $\fc$ and let $z_1z_2\ldots z_e=0$ be the local equation of $D$ at $p$. Then \begin{enumerate}[(a)]
		\item if $p$ is a simple corner (i.e. $e\geq 2$),  each separatrix of $\fc$ at $p$ must be contained in $D$.
		\item Let $(\widetilde{X},\widetilde{\fc},\widetilde{D})$ be the induced 1-foliated triple by the blow-up at $p$ with the exceptional divisor $E$, and $C$ be a separatrix at $p$ which is not contained in $D$. If $p$ is a simple point (i.e. $e=1$), then the lift $\widetilde{C}$ of each separatrix $C$ to $\widetilde{X}$ intesects with $E$ at $\widetilde{p}_1$, which is still a simple point with respect to $(\widetilde{X},\widetilde{\fc},\widetilde{D})$.
	\end{enumerate}
\end{thm}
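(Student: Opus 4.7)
The plan is to extract from the non-resonance eigenvalue conditions $\lambda_i/\lambda_j \notin \mathbb{Q}_+$ in Proposition \ref{summary ais} a rigidity statement on local analytic invariant curves through $p$. Working in the coordinates furnished by Proposition \ref{summary ais} and passing to a normalization of $C$ if necessary, I would fix a parametrization $t \mapsto (z_1(t),\ldots,z_n(t))$ of $C$ at $p = 0$ with $z_i(0) = 0$, and set $m_i := \mathrm{ord}_{t=0}\, z_i(t) \in \mathbb{N}^\star \cup \{\infty\}$. Invariance of $C$ under $\fc$ translates into
\[
z_i'(t) = g(t)\, v^i(z(t)), \qquad 1 \leq i \leq n,
\]
for some meromorphic scalar $g(t)$, where $v^j = z_j\, a_j$ for $j \leq e$ and $v^i = a_i$ for $i > e$, with $a_j(0) = \lambda_j$ for $j \leq e$.

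For part (a), assuming for contradiction $C \not\subset D = \{z_1 \cdots z_e = 0\}$ forces $m_j < \infty$ for every $j = 1,\ldots,e$. For each such $j$, dividing $z_j' = g\, z_j\, a_j(z)$ by $z_j$ gives
\[
\frac{z_j'(t)}{z_j(t)} = g(t)\bigl(\lambda_j + O(t)\bigr),
\]
while $z_j'(t)/z_j(t) = m_j/t + O(1)$, so $g(t) = m_j/(\lambda_j\, t) + O(1)$ near $0$. Comparing the two expressions for $j = 1$ and $j = 2$ (both available since $e \geq 2$ and $\lambda_1, \lambda_2 \neq 0$ at a simple corner) forces $\lambda_1/\lambda_2 = m_1/m_2 \in \mathbb{Q}_+$, contradicting Proposition \ref{summary ais}.

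For part (b), $e = 1$, so $D = \{z_1 = 0\}$ and $C \not\subset D$ means $m_1 < \infty$. I would establish the stronger claim that $z_i \equiv 0$ for every $i \geq 2$, i.e.\ $C$ coincides with the $z_1$-axis; its strict transform under $\mu$ then passes through the origin of the chart $\widetilde{z}_1 = z_1,\ \widetilde{z}_i = z_i/z_1$ ($i \geq 2$), which is precisely $\widetilde{p}_1$ in the notation of Proposition \ref{summary ais}. In type (B) ($\lambda_1 \neq 0$), the $z_1$-equation again yields $g(t) = m_1/(\lambda_1 t) + O(1)$, and matching leading orders in $z_i' = g\, a_i(z)$ with $a_i(z) = \lambda_i z_i + (\text{Jordan/higher-order})$ forces $m_i = m_1\,\lambda_i/\lambda_1$ whenever $z_i \not\equiv 0$, contradicting $\lambda_i/\lambda_1 \notin \mathbb{Q}_+$; Jordan blocks are handled by a downward induction on the position within the block, starting from the terminal slot where the linear part is purely diagonal. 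In type (A) ($\lambda_1 = 0$), $a_1(z(t))$ vanishes at $t = 0$ to order $\geq m := \min_i m_i \geq 1$, so $g$ must have a pole of order $\geq 1 + m$ at $0$; then for any $i \geq 2$ with $z_i \not\equiv 0$, the two sides of $z_i' = g\, a_i(z)$ cannot be matched at leading order, forcing $z_i \equiv 0$.

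Finally, that $\widetilde{p}_1$ remains a simple point of $(\widetilde{X}, \widetilde{\fc}, \widetilde{D})$ is exactly the content of Proposition \ref{stable simple}(b-1). The main obstacle I anticipate is the bookkeeping for Jordan blocks in part (b), especially in type (A), where the degeneracy $\lambda_1 = 0$ inflates the pole order of $g$; the cleanest route is to treat the terminal coordinate in each Jordan block first (where the linear part is purely diagonal) and propagate upward, each time using that the previously treated coordinates are identically zero on $C$.
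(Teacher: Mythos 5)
Your argument for part~(a) is correct and essentially the same as the paper's: one shows that the scalar $g(t)$ (the paper's $\eta(t)$) has a simple pole at $t=0$ and compares residues coming from the $z_1$- and $z_2$-equations, obtaining $\lambda_1/\lambda_2=m_1/m_2\in\mathbb{Q}_+$.

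For part~(b), however, there is a genuine gap. You try to prove the stronger statement that every separatrix $C\not\subset D$ must coincide with the $z_1$-axis, i.e.\ $z_i\equiv 0$ on $C$ for $i\geq 2$. This is simply false. Consider $n=2$, $e=1$, and the foliation generated by
$$v=z_1\frac{\partial}{\partial z_1}-\left(z_2+z_1^2\right)\frac{\partial}{\partial z_2},$$
with linear part $\mathrm{diag}(1,-1)$, so $p=0$ is a simple point of type~(B). The curve $C=\{z_2=-z_1^2/3\}$ is a separatrix not contained in $D=\{z_1=0\}$ and not equal to the $z_1$-axis, with $(m_1,m_2)=(1,2)$. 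The relation you claim, $m_2=m_1\lambda_2/\lambda_1=-1$, is violated. The step where your argument breaks is ``matching leading orders in $z_i'=g\,a_i(z)$ with $a_i(z)=\lambda_iz_i+(\text{higher order})$'': on a separatrix that is not a coordinate axis, the nonlinear part of $a_i$ evaluated along $C$ can be of the \emph{same} $t$-order as $\lambda_i z_i(t)$ (here $-z_1^2$ and $-z_2$ are both of order $2$), so the leading coefficient of $a_i(z(t))$ is not governed by $\lambda_i$ alone, and no resonance relation follows. Note also that the theorem's conclusion ($\widetilde{C}\cap E=\widetilde{p}_1$) is the inequality $m_i>m_1$ for all $i\geq 2$, which is strictly weaker than $z_i\equiv 0$; in the example above $m_2=2>m_1=1$, so the theorem holds even though your intermediate claim fails.

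The paper's proof avoids this trap by arguing directly with the point where $\widetilde{C}$ meets $E$: suppose $\widetilde{C}\cap E=\widetilde{p}_l$ for some $l\geq 2$. Because $\widetilde C$ passes through $\widetilde p_l$, one gets the \emph{monomial ordering} $\nu_i>\nu_{c_l}$ for all $i\neq c_l$, and this, combined with $\nu_{c_l}\geq1$, is exactly what makes the linear term $\lambda_{c_l}z_{c_l}$ dominate $a_{c_l}(f(t))$ (any quadratic monomial has order $\geq 2\nu_{c_l}>\nu_{c_l}$, and the possible Jordan term $z_{c_l+1}$ has order $\nu_{c_l+1}>\nu_{c_l}$). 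This yields $b(0)\lambda_{c_l}=\nu_{c_l}$ with $\lambda_{c_l}\neq 0$, hence $\eta$ has a simple pole, and then the $z_1$-equation gives $b(0)\lambda_1=\nu_1>0$, forcing $\lambda_1/\lambda_{c_l}\in\mathbb{Q}_+$ (or $\lambda_1=0$ with $\nu_1=0$), a contradiction. You need this dominance step, and it only comes for free once you locate $\widetilde C\cap E$ on the exceptional divisor and exploit the resulting order inequalities; it does not hold for an arbitrary index $i$ as your draft assumes.
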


\begin{proof}
 Assume that we have a separatrix $C$ of $\fc$ at $p$ which is not contained in $D$. We take a local parametrization $f:(\cb,0)\rightarrow (C,p)$ for this separatrix, then in the local coordinate system $(z_1,\ldots,z_n)$ introduced in Proposition \ref{summary ais},  we have
	$$
	\left( f'_1(t),\ldots, f'_n(t)\right)=\eta(t)\cdot \left( f_1(t)a_1(f),\ldots, f_e(t)a_e(f),a_{e+1}(f),\ldots, a_n(f) \right)
	$$
	for some meromorphic function $\eta(t)$ whose poles only appear at $0$.
By the assumption that $C$ is not contained in $D$, $f_i(t)$ is not identically equal to zero for $i=1,\ldots,e$. We denote by $\nu_i$ the vanishing order of $f_i(t)$ at 0 for $i=1\ldots,n$, which are all positive integers.

If $p$ is a simple corner, then $e\geq2$ and $\lambda_2=a_2(0)\neq 0$, and we have 
$$
\eta(t)a_2(t)=\frac{f'_2(t)}{f_2(t)}.
$$
This implies that the order of pole of $\eta(t)$ at 0 must be 1. If we denote by $\eta(t)=\frac{b(t)}{t}$ with $b(t)$ some germ of holomorphic function satisfying $b(0)\neq 0$, then
$$
b(0)\cdot \lambda_2=\nu_2.
$$
Similarly we have 
$$
b(0)\cdot \lambda_1=\nu_1>0,
$$
thus $\frac{\lambda_1}{\lambda_2}=\frac{\nu_1}{\nu_2}\in \mathbb{Q}_+$, which is a contradiction. Therefore any separatrix at the simple corner must be contained in $D$, and $p$ only can be a simple point (i.e. $e=1$).

Suppose now that $p$ is a simple point. Since $\widetilde{C}\cap E\in \text{Sing}(\widetilde{\fc})\cap E= \{\widetilde{p}_1,\ldots,\widetilde{p}_{s+k}\}$, we can assume that $\widetilde{C}\cap E=\{\widetilde{p}_l\}$ for some $l\geq2$. Since $\widetilde{p}_l$ is the zero at the charts $\widetilde{z}_j=z_j, \widetilde{z}_i=z_i/z_j$ for $i=1,\ldots,n$ and $j=l$ if $l\leq s$ or $j=s+r_1+\ldots+r_{m-1}+1$ if $l=s+m$, the lift $\widetilde{f}(t)$ of $f(t)$ to $\widetilde{X}$ is
$$
\left(\widetilde{f}_1(t)\ldots, \widetilde{f}_n(t)\right)=\left( \frac{f_1(t)}{f_{c_l}(t)},\ldots, \frac{f_{c_l-1}(t)}{f_{c_l}(t)},f_{c_l}(t),\frac{f_{c_l+1}(t)}{f_{c_l}(t)},\ldots, \frac{f_{n}(t)}{f_{c_l}(t)}\right),
$$
where
\begin{equation}
c_l=\left\{
\begin{aligned}
&l &\text{if} \quad l\leq s,\\
&j=s+r_1+\ldots+r_{m-1}+1 &\text{if} \quad l=s+m,
\end{aligned}
\right.\nonumber
\end{equation}
and it satisfies
$$
\widetilde{f}(0)=0.
$$
Hence we have
$$
\nu_i>\nu_{c_l} 
$$
for $i\neq c_l$. Therefore from the linear part of $a_{c_l}$ we have
\begin{equation}\nonumber
a_{c_l}(f(t))=\lambda_{c_l}f_{c_l}(t)+ \text{higher order term},
\end{equation}
where the higher order term is with respect to $t$. Thus we can denote by $\eta(t)=\frac{b(t)}{t}$ with $b(0)\neq 0$, and 
$$
b(0)\cdot \lambda_{c_l}=\nu_{c_l},
$$
which implies 
$$
b(0)\cdot \lambda_1=\nu_1>0,
$$
thus $\frac{\lambda_1}{\lambda_{c_l}}=\frac{\nu_1}{\nu_{c_l}}\in \mathbb{Q}_+$. This is a contradiction, and claim (b) of the theorem is proved.
\end{proof}

Thanks to the previous theorem, we can prove the following result, which can be seen as a generalization of McQuillan's ``Diaphantine approximation".
\begin{thm}\label{diophantine}
Let $(X,\fc,D)$ be a 1-foliated triple with absolutely isolated singularities, $H$ an ample divisor over $X$, and $p_0$ a simple point of $\fc$.  We 
take a sequence of blow-up's 
$$
(X,\fc,D)\xleftarrow{\pi_1} 
(X_1,\fc_1,D_1)\xleftarrow{\pi_2}\cdots\xleftarrow{\pi_{k}} (X_k,\fc_k,D_k)\xleftarrow{\pi_{k+1}}\cdots 
$$
such that the center of each blow-up $\pi_i$ is the simple point $p_{i-1}\in \text{Sing}({\fc_{i-1}})\cap E_{i-1}$, where $E_{i-1}$ is the exceptional divisor of the blow-up $\pi_{i-1}$ (from Proposition \ref{stable simple} we know that $p_{i-1}$ exists and is unique).
Then there exists a positive integer $\alpha>0$ such that $ \lceil k^{\frac{1}{n}}\rceil\alpha \mu_k^* H- kE_k$ is effective in $X_k$ for any $k>0$, where $\mu_k:=\pi_k\circ \ldots \circ \pi_1 $.
\end{thm}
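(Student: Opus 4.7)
The plan is to find $s \in H^0(X, mH)$ with $m = \lceil k^{1/n}\rceil \alpha$ whose pullback $\mu_k^* s$ vanishes to order at least $k$ along $E_k$; such a section immediately yields the desired effective divisor $\mu_k^*(mH) - kE_k$. Setting $\nu_i := \mathrm{mult}_{p_{i-1}}(\widetilde{s}_{i-1})$ for the successive strict transforms of $s$, the standard multiplicity formula for iterated blow-ups of infinitely-near points gives
$$
\ord_{E_k}(\mu_k^* s) \;\geq\; \nu_1 + \nu_2 + \cdots + \nu_k,
$$
so it is enough to arrange $\nu_i \geq 1$ for every $i = 1, \ldots, k$.

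The first step is the geometric observation that the points $p_0, p_1, \ldots, p_{k-1}$ align along a formal arc. Indeed, each $p_{i-1}$ is the unique simple point on $E_{i-1}$ by Proposition~\ref{stable simple}, and iterated application of Theorem~\ref{separatrix}(b) together with the explicit blow-up description in Proposition~\ref{summary ais} shows that the tangent direction of the chain $(p_i)$ is inductively well-defined as a formal curve germ $\widehat{C}$ at $p_0$. Imposing $\nu_i \geq 1$ for all $i = 1, \ldots, k$ is then equivalent to asking that the Taylor restriction $s|_{\widehat{C}}$ vanish to order $\geq k$ at $p_0$, which translates into exactly $k$ linear conditions on $H^0(X, mH)$---one for each of the first $k$ Taylor coefficients of $s$ along $\widehat{C}$.

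The second step is a Riemann--Roch dimension count. Since $H$ is ample, $\dim H^0(X, mH) = \frac{H^n}{n!}\, m^n + O(m^{n-1})$. Fix an integer $\alpha$ with $\alpha^n H^n > n!$ and large enough to dominate the $O(m^{n-1})$ terms as well as to handle the finitely many small values of $k$. Then for $m = \lceil k^{1/n}\rceil \alpha$ one has $\dim H^0(X, mH) > k$ for every $k \geq 1$, so imposing the $k$ linear conditions above leaves a subspace of strictly positive dimension. Any non-zero $s$ in this subspace satisfies $\nu_i \geq 1$ for every $i$, and $\mu_k^* s$ then gives a non-zero section of $\mu_k^*(mH) - kE_k$, proving effectivity.

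The main obstacle is the first step: verifying that the infinitely-near sequence $(p_i)$ produces a well-defined formal arc $\widehat{C}$ at $p_0$ and that the $k$ conditions $\nu_i \geq 1$ indeed amount to $k$ linear functionals on the Taylor jet of $s$ at the fixed point $p_0$. This requires carefully matching the adapted local coordinates of Proposition~\ref{summary ais} at each successive simple point, using Proposition~\ref{stable simple} to ensure the simple branch is preserved and Theorem~\ref{separatrix}(b) to propagate the tracking tangent direction through the tower of blow-ups. Once these coordinate identifications are in place, the translation of $\nu_i \geq 1$ into a single new linear condition on the $(i-1)$-jet of $s$ along $\widehat{C}$ becomes a direct local calculation.
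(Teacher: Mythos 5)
Your overall strategy — impose $k$ vanishing conditions determined by the formal arc through the infinitely-near points $p_0,p_1,\ldots$ and conclude by a Riemann--Roch dimension count — is the same as the paper's, and the dimension count in your second step is fine. The gap is in your first step, at the claimed equivalence between ``$\nu_i\geq 1$ for all $i=1,\ldots,k$'' and ``$s|_{\widehat{C}}$ vanishes to order $\geq k$ at $p_0$''. That equivalence is false. In the adapted coordinates of Proposition~\ref{summary ais}, $\widehat{C}$ is the $z_1$-axis, and ``$s|_{\widehat{C}}$ vanishes to order $\geq k$'' means $s\in(z_1^k,z_2,\ldots,z_n)\cdot\oc_{X,p_0}$. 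Take for instance $s=z_1^k+z_1z_2$: then $s$ lies in this ideal, yet one computes $\nu_1=2$, $\nu_2=\cdots=\nu_{k-1}=1$, $\nu_k=0$. So the $k$ linear conditions do \emph{not} imply $\nu_i\geq1$ for every $i$. More seriously, the set $\{\nu_i\geq1\ \forall i\}$ is not a linear subspace of $H^0(X,mH)$ at all, because strict transforms are not additive when leading terms cancel; so it cannot be ``translated into a single new linear condition on the $(i-1)$-jet'' as you claim. With the equivalence broken, the argument does not go through as stated: the space of sections satisfying your $k$ linear conditions is nonempty by Riemann--Roch, but you have not shown that any of these sections has $\nu_i\geq1$ for all $i$.

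The fix is to drop the detour through the individual multiplicities $\nu_i$ and argue directly, as the paper does. Since in the adapted chart
$$
\mu_k(z^{(k)}_1,\ldots,z^{(k)}_n)=\bigl(z^{(k)}_1,(z^{(k)}_1)^k z^{(k)}_2,\ldots,(z^{(k)}_1)^k z^{(k)}_n\bigr),
$$
every monomial of $s$ that involves some $z_j$ with $j\geq2$ pulls back with order $\geq k$ along $E_k=\{z^{(k)}_1=0\}$, while a pure power $z_1^m$ pulls back with order $m$. Hence $s\in(z_1^k,z_2,\ldots,z_n)$ (your $k$ linear conditions) already forces $\ord_{E_k}(\mu_k^*s)\geq k$, which is all you need; the quantity $\sum\nu_i=\ord_{E_k}(\mu_k^*s)$ is then automatically $\geq k$ without requiring each $\nu_i\geq1$. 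After this correction the Riemann--Roch count you give completes the proof and coincides with the paper's argument.
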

\begin{proof}
	With the above notation, we can assume that $p_1$ is the origin of the charts $z^{(1)}_1=z_1, z^{(1)}_jz^{(1)}_1=z_j$ for $j\neq 1$, where $z_1=0$ is the local equation of $D$ at $p_0$. Inductively  $p_k$ is the origin of the charts $z^{(k)}_1=z^{(k-1)}_1, z^{(k)}_jz^{(k)}_1=z^{(k-1)}_j$ for $j\neq 1$, and $z^{(k)}_1=0$ is the local equation of $E_k$. Thus we have
	$$
	\mu_k(z^{(k)}_1,\ldots,z^{(k)}_n)=(z^{(k)}_1,z^{(k)k}_1z^{(k)}_2,\ldots,z^{(k)k}_1z^{(k)}_n),
	$$
	which implies
	$$
	\oc_{X_k}(-kE_k)\subset \mu^{*}_km_{(z^{k}_1,z_2,\ldots,z_n)},
	$$
	where $m_{(z^{k}_1,z_2,\ldots,z_n)}\subset \oc_{X,p}$ is the ideal generated by $(z^{k}_1,z_2,\ldots,z_n)$ at $p$.
	From the exact sequence
	$$
	0\longrightarrow \oc_X(\lceil k^{\frac{1}{n}}\rceil\alpha H)\cdot m_{(z^{k}_1,z_2,\ldots,z_n)}\longrightarrow \oc_X(\lceil k^{\frac{1}{n}}\rceil\alpha H) \longrightarrow \oc_{X,p}/m_{(z^{k}_1,z_2,\ldots,z_n)}\longrightarrow 0,
	$$
	and the Riemann-Roch theorem we can choose a fixed positive integer $\alpha>0$ such that
	$$
	H^0(X,\oc_X(\lceil k^{\frac{1}{n}}\rceil\alpha H)\cdot m_{(z^{k}_1,z_2,\ldots,z_n)})\neq 0
	$$
	for any $k>0$. If we choose a divisor $R_k\in	H^0(X,\oc_X(\lceil k^{\frac{1}{n}}\rceil\alpha H)\cdot m_{(z^{k}_1,z_2,\ldots,z_n)})$, then 
	$\mu^*_kR_k-kE_k$ is effective.
\end{proof}

\begin{thm}
	Let $\fc$ be a foliation by curves on a $n$-dimensional complex manifold $X$, 
	such that the singularities $\text{Sing}(\fc)$ of the foliation $\fc$ is a set of absolutely isolated singularities. If $f:\cb\rightarrow X$ is an Zariski dense entire curve which is tangent to $\fc$, then we can blow-up $X$ finitely many times to get a new birational model $(\widetilde{X},\widetilde{\fc})$ such that
	$$
	T[\widetilde{f}]\cdot T_{\widetilde{\fc}}=0.
	$$
\end{thm}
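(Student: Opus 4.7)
The plan is first to resolve the AIS into simple singularities using Theorem \ref{resolve ais}, then to establish that blow-ups at simple points preserve the intersection $\langle T[f],T_\fc\rangle$, and finally to use the Diophantine approximation of Theorem \ref{diophantine} iteratively to force this invariant quantity to vanish.

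Applying Theorem \ref{resolve ais}, a finite sequence of blow-ups at AIS points produces a birational model $(X',\fc',D')$ whose singularities are all simple and non-dicritical. The lifted curve $f'$ remains Zariski dense because its pushforward is the Zariski dense $f$; replacing $(X,\fc,f)$ by $(X',\fc',f')$ we may assume $\fc$ itself has only simple non-dicritical singularities. For any further blow-up $\pi:\widetilde X\to X$ centered at such a simple point $p$, the Jordan normal form of Proposition \ref{summary ais} shows by direct local computation that $\pi^*v$ does not vanish generically on the exceptional divisor $E$ (the algebraic multiplicity at $p$ equals $1$ and the non-dicritical assumption pins the tangency order to $0$). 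Hence $T_{\widetilde\fc}=\pi^*T_\fc$, and Lemma \ref{bimero def} then gives
$$\langle T[\widetilde f],T_{\widetilde\fc}\rangle=\langle T[f],T_\fc\rangle,$$
so this intersection is an invariant of the entire tower of simple blow-ups.

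Next I would run the Diophantine tower of Theorem \ref{diophantine} simultaneously at each of the finitely many simple singular points $p_0^{(j)}$ of $\fc$, obtaining for every $k>0$ a birational morphism $\mu_k:X_k\to X$ with successive simple centers $p_k^{(j)}\in E_k^{(j)}$ provided by Proposition \ref{stable simple}. Theorem \ref{diophantine} yields an effective divisor $\lceil k^{1/n}\rceil\alpha\,\mu_k^*H-kE_k^{(j)}$ on $X_k$, whence intersecting with the Ahlfors current gives
$$\langle T[f_k],E_k^{(j)}\rangle\leq \frac{\lceil k^{1/n}\rceil\alpha}{k}\langle T[f],H\rangle=O(k^{1/n-1})\longrightarrow 0.$$
On the other hand, Theorem \ref{formula tangent} applied to $(X_k,\fc_k)$ gives the identity
$$\langle T[f],T_\fc\rangle+T(f_k,\jc_{\fc_k})=\langle T[(f_k)_1],\oc(-1)\rangle\geq 0,$$
whose left-hand side is independent of $k$. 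The local expression of $\jc_{\fc_k}$ near each $p_k^{(j)}$, read off from Proposition \ref{summary ais}, is essentially the maximal ideal; hence its pull-back under the next blow-up is comparable to $\oc(-E_{k+1}^{(j)})$. Combining this with the Diophantine bound forces both $T(f_k,\jc_{\fc_k})$ and $\langle T[(f_k)_1],\oc(-1)\rangle$ to decay like $\sum_j\langle T[f_k],E_k^{(j)}\rangle=O(k^{1/n-1})$, so the common invariant $\langle T[f],T_\fc\rangle$ must vanish.

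The main obstacle I anticipate is the precise local-to-global bookkeeping: one must decompose both sides of the tangent formula as sums of local contributions at each simple singular point and show that these local contributions are uniformly dominated by the Diophantine terms $\langle T[f_k],E_k^{(j)}\rangle$. This requires a delicate uniform analysis using the Jordan normal form of Proposition \ref{summary ais} and the separatrix control of Theorem \ref{separatrix}, run consistently along the full multi-tower.
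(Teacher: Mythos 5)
Your outline matches the paper's skeleton for the ``$\geq 0$'' half of the argument (reduce to simple non-dicritical singularities, observe $\pi^*T_\fc=T_{\widetilde\fc}$ so that $\langle T[f],T_\fc\rangle$ is invariant along the tower, and drive the error term to zero via the Diophantine estimate of Theorem \ref{diophantine}), but there is a genuine gap in how you try to obtain the equality ``$=0$''. You assert that both $T(f_k,\jc_{\fc_k})$ \emph{and} $\langle T[(f_k)_1],\oc_{X_{k,1}}(-1)\rangle$ decay like $O(k^{1/n-1})$, and conclude that their difference $\langle T[f],T_\fc\rangle$ is zero. But nothing in your argument controls the tautological term from above: the tautological inequality (Theorem \ref{tauto}) is only a lower bound, and the Diophantine estimate bounds $\langle T[f_k],E_k\rangle$, which feeds only into the $T(f_k,\jc_{\fc_k})$ side. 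If $\langle T[f],T_\fc\rangle$ were strictly positive, you would simply have $\langle T[(f_k)_1],\oc(-1)\rangle\to\langle T[f],T_\fc\rangle>0$ with no contradiction. The paper supplies the missing upper bound from an entirely independent ingredient: a theorem of Brunella asserting that $K_{\widetilde\fc}$ is pseudo-effective whenever $\widetilde\fc$ carries a transcendental leaf, which gives $\langle T[\widetilde f],c_1(K_{\widetilde\fc})\rangle\geq0$, i.e.\ $\langle T[\widetilde f],T_{\widetilde\fc}\rangle\leq0$. Without some such input your argument can only yield $\langle T[f],T_\fc\rangle\geq0$, not equality.

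There is also a secondary issue with your use of the non-logarithmic formula (Theorem \ref{formula tangent}) and the ideal $\jc_{\fc_k}$. At a simple point of type $(B)$ and at simple corners, $\jc_{\fc_k}$ is nontrivial (at a type $(B)$ point it is the full maximal ideal, since $v=z_1a_1\partial_{z_1}+\sum_{i\geq2}a_i\partial_{z_i}$ with $a_1(0)\neq0$), and simple corners proliferate along the tower; it is not at all clear that the sum of these local contributions is dominated by the Diophantine bound. The paper sidesteps this by working with the logarithmic triple $(X_k,\fc_k,D_k)$ and the logarithmic formula of Theorem \ref{log version}: the log ideal $\jc_{\fc_k,D_k}$ is trivial at type $(B)$ points and simple corners and nontrivial only at type $(A)$ points, measured by the invariant $\eta_p$. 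The price is the extra truncated counting term $N^{(1)}(f_k,\mathrm{Sing}(\fc_k)\cap D_k)$, which the paper controls precisely through the separatrix analysis of Theorem \ref{separatrix} (showing only the unique simple point contributes, and bounding that by $\langle T[f_{k+1}],E_{k+1}\rangle$). You identified this bookkeeping as an obstacle but did not resolve it; in the paper it is not merely bookkeeping but the reason the logarithmic machinery is used at all.
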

\begin{proof}
	First we assume that all singularities of $\fc$ are simple and non-dicritical, and that the linear forms of the generator of $\fc$ are like those described in Proposition \ref{summary ais}. Then, by Definition \ref{simple singularity}, we know that the coherent ideal sheaf $\jc_{\fc,D}$ is not trivial at $p$ if and only if $p$ is a simple point of type $(A)$. We denote by $\eta_p$ the least  integer $k$ such that $\frac{\d^k a_1}{\d z^k_1}(0)\neq 0$ (if $p$ is a simple point of type $(B)$ , let $\eta_p=0$). Without loss of generality we can assume that $p$ is the unique simple point among all singularities. 
	
	If we take a sequence of blow-ups as described in Theorem \ref{diophantine}, by Proposition \ref{stable simple} we know that on each $X_k$, $p_k$ is the unique simple point, and it is easy to verify that
	$$
	\eta_p=\eta_{p_1}=\cdots=\eta_{p_k}=\cdots.
	$$
	If $\eta_p=0$, then $\jc_{\fc_{k,D_k}}=\oc_{X_k}$ for any $k$; now we assume that $\eta_p>0$. Fix a $k$, repeat the blow-up procedure $\eta_p$ times, and resolve the coherent ideal sheaf $\jc_{\fc_{k,D_k}}$, i.e.,
	\begin{equation}\label{resolve}
		\mu^*_{k+\eta_p,k+1}\jc_{\fc_k,D_k}=\oc_{X_{k+\eta_p}}(-\widetilde{E}_{k+1}-\ldots-\widetilde{E}_{k+\eta_p-1}-E_{k+\eta_p}),
	\end{equation}
	where $\mu_{k+\eta_p,k+1}=\pi_{k+\eta_p}\circ\cdots\circ\pi_{k+1}$, and $\widetilde{E}_{i}$ is the strict transform of $E_{i}$ under $\pi_{k+\eta_p}\circ \cdots \circ \pi_{i+1}:X_{k+\eta_p}\rightarrow X_{i}$ (by Proposition \ref{summary ais} we have an explicit linear part of the local generator of $\fc_k$ at the simple point, it is easy to examine this).
	
	From Theorem \ref{diophantine}  we know that $\lceil k^{\frac{1}{n}}\rceil \mu_k^* H- kE_k$ is effective for some ample divisor $H$ and any $k>0$. Thus we have
	$$
	T[f_k]\cdot (\mu_k^* H- \frac{k}{\lceil k^{\frac{1}{n}}\rceil}E_k)\geq 0.
	$$
	By the equality ${\mu_k}_*T[f_k]=T[f]$  we get
	$$
	T[f_k]\cdot E_k\rightarrow 0
	$$
	as $k\rightarrow \infty$.
	By Theorem \ref{log version}
	$$
	\langle T[f_k], c_1(T_{\fc_k})\rangle+T(f_k,\jc_{\fc_k,D_k})\geq -N^{(1)}(f_k,\text{Sing}(\fc_k)\cap D_k),
	$$
	where $N^{(1)}(f_k,\text{Sing}(\fc_k)\cap D_k)$ is the truncated counting function. By Theorem \ref{separatrix} (a) we know that the image of $f$ contains no simple corners, otherwise $f(\cb)$ would be contained in $D$; therefore simple corners do not contribute to $N^{(1)}$. If $p\in f(\cb)$ for the unique simple point $p$, then Theorem \ref{separatrix} (b) tells us that $f_k(\cb)\cap \text{Sing}(\fc_k)=\{p_k\}$, and thus
	$$
	N^{(1)}(f_k,\text{Sing}(\fc_k)\cap D_k)\leq T[f_{k+1}]\cdot {E_{k+1}}.
	$$
	From (\ref{resolve}) we obtain
	$$
	T(f_k,\jc_{\fc_k,D_k})=\sum_{j=1}^{\eta_p}T[f_{k+j}]\cdot E_{k+j},
	$$
	and thus
	$$
		\langle T[f_k], c_1(T_{\fc_k})\rangle\geq -2T[f_{k+1}]\cdot {E_{k+1}}- \sum_{j=2}^{\eta_p}T[f_{k+j}]\cdot E_{k+j}.
	$$
	The right hand of the preceding inequality tends to 0 as $k\rightarrow \infty$.
	Since all singularities are non-dicritical, we have $\mu^*_k(T_\fc)=T_{\fc_k}$ and
	$$
	T[f_k]\cdot c_1(T_{\fc_k})=T[f]\cdot c_1(T_\fc),
	$$
	thus $T[f]\cdot c_1(T_\fc)\geq 0$.
	
	Now we come to the general case: there are some singularities which may not be simple singularities. By Theorem \ref{resolve ais} we can take a finite sequence of blow-ups with centers only at singularities, to get a new 1-foliated triple  $(\widetilde{X},\widetilde{\fc},\widetilde{D})$ with simple singularities which are all non-dicritical. By the proof above we have
	$$
	T[\widetilde{f}]\cdot c_1(T_{\widetilde{\fc}})\geq0.
	$$
	
	However, a theorem of Brunella implies that $K_{\widetilde{\fc}}$ is pseudo-effective, since $\widetilde{\fc}$ contains a transcendental leaf. Therefore
	$$
	T[\widetilde{f}]\cdot c_1(K_{\widetilde{\fc}})\geq0.
	$$
	If we combine the two inequalities, we obtain
	$$
	T[\widetilde{f}]\cdot c_1(T_{\widetilde{\fc}})=0.
	$$
\end{proof}

\section{Towards the Green-Griffiths conjecture}

In order to pursue the similar strategy and prove the Green-Griffiths conjecture for any complex surface $X$ of general type, one needs to know the existence of a 1-dimensional foliation directing any given Zariski dense entire curve $f:\cb\rightarrow X$. The condition of $c_1(X)^2-c_2(X)>0$ ensures the existence of multi-foliation 
on $X$ such that any entire curve should be tangent to it. The difficulty in 
proving the general case is that, we can not ensure that there exists such a 
(multi)-foliation on $X$ itself. However, inspired by a very recent work of Demailly 
\cite{Dem15}, we believe that his definition of a variety ``strongly of general type" is in some sense akin to the construction of foliations. Although one cannot construct foliations on $X$ directly, one can prove the existence of some special multi-foliations in certain Demailly-Semple tower of $X$. Indeed, in 
\cite{Dem10} the following theorem has been proved:
\begin{thm}
	Let $(X,V)$ be a directed variety of ``general type" (cf. \cite{Dem12} for the 
	definition of general type when $V$ is singular), then $\oc_{X_k}(m)\otimes 
	\pi^*_{k,0}\oc(-\frac{m}{kr}(1+\frac{1}{2}+\ldots+\frac{1}{k})A)$ is big thus 
	has sections for $m\gg k\gg 1$, where $X_k$ is the k-th stage of 
	Demailly-Semple tower of $X$ and $A$ is an ample divisor on $X$.
\end{thm}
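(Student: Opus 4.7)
The plan is to follow Demailly's strategy of combining holomorphic Morse inequalities with a probabilistic curvature computation on the Demailly--Semple tower. Write $n=\dim X$, $r=\operatorname{rank} V$ and $n_k=\dim X_k = n+k(r-1)$. For any weight vector $\mathbf{a}=(a_1,\ldots,a_k)\in\mathbb{Q}_+^k$, introduce the weighted tautological line bundle
$$
\oc_{X_k}(\mathbf{a})\;:=\;\bigotimes_{j=1}^{k}\pi_{k,j}^{*}\oc_{X_j}(a_j),
$$
where $\pi_{k,j}:X_k\to X_j$ are the tower projections. The first step is to establish a Demailly-type inductive curvature estimate: for weights satisfying $a_j\ge 3\,a_{j+1}$, the bundle $\oc_{X_k}(\mathbf{a})$ carries a (singular) hermitian metric whose curvature is bounded below by a smooth form, provided one twists by $-C|\mathbf{a}|\,\pi_{k,0}^{*}A$ for some constant $C$ depending only on the geometry of $(X,V)$. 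This is proved level by level, using the relative Euler sequence on each $X_j\to X_{j-1}$ to express $c_1(\oc_{X_j}(1))$ in terms of a fiber-positive contribution and a curvature correction pulled up from $V_{j-1}$.

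Next, apply the algebraic holomorphic Morse inequalities of Demailly to the line bundle $F := \oc_{X_k}(\mathbf{a})\otimes\pi_{k,0}^{*}\oc(-C|\mathbf{a}|A)$. Since this bundle is nef, one obtains
$$
h^0\bigl(X_k, mF\bigr)\;\geq\;\frac{m^{n_k}}{n_k!}\,(F^{n_k})\;-\;o(m^{n_k})
$$
as $m\to\infty$, and it suffices to show that the leading intersection number is positive once one absorbs $\pi_{k,0}^{*}A$-corrections into the weights.

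The central computation is to evaluate $(F^{n_k})$ by fiber integration along $X_k\to X$. The fibers are iterated projective bundles of total dimension $k(r-1)$, and the integration reduces to a combinatorial average of characteristic classes of $V$ over the simplex $\{0\le\xi_k\le\cdots\le\xi_1\}$ equipped with the measure induced by $\mathbf{a}$. By a standard symmetrization argument (pioneered by Demailly), the leading contribution is proportional to $(K_V^{\bullet})\cdot c(V)$-type characteristic numbers on $X$, times a combinatorial factor $\operatorname{Vol}_\mathbf{a}(\Delta_k)$. Choosing the harmonic weights $a_j=1/j$ makes $|\mathbf{a}|=1+\tfrac{1}{2}+\cdots+\tfrac{1}{k}$, and this sum is exactly the factor that appears in front of $A$ in the twist; a rescaling by $kr$ (the dimension of the fiber at each level, up to the product structure) yields the prefactor $\tfrac{1}{kr}(1+\tfrac{1}{2}+\cdots+\tfrac{1}{k})$ in the statement. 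Since $(X,V)$ is of general type, $K_V$ is big, and the leading coefficient is therefore strictly positive for $m\gg k\gg 1$.

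The main obstacle is the combinatorial fiber integration in the third step: one must identify the top self-intersection $(F^{n_k})$ with an explicit integral over the weight simplex and then optimize over weight vectors. A secondary difficulty is to ensure that the negative contribution coming from $\pi_{k,0}^{*}A$ in the Morse integral is dominated by the fiber-positive term; this requires the harmonic weights precisely because $\sum_j 1/j$ grows only logarithmically in $k$, so the twist by $A$ stays small compared to the cumulative positivity collected along the tower. Once these two points are in place, the bigness follows from the Morse lower bound on $h^0(X_k,mF)$.
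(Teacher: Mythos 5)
The paper does not contain a proof of this statement: it is quoted from Demailly's work \cite{Dem10}, so your proposal can only be compared against Demailly's actual argument. At that level, your outline captures the general flavour (holomorphic Morse inequalities, a curvature computation along the tower, harmonic weights $a_j = 1/j$ giving the factor $1+\tfrac12+\cdots+\tfrac1k$), but it has two genuine gaps.

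First, there is an internal inconsistency in the choice of weights. Your Step 1 (the ``inductive curvature estimate'') requires $a_j \ge 3\,a_{j+1}$, which is the classical condition under which $\oc_{X_k}(\mathbf{a})$ becomes relatively nef over $X$ after a bounded twist by $\pi_{k,0}^*A$. But the harmonic weights $a_j=1/j$ chosen in Step 4 grossly violate this (e.g.\ $a_1=1$ but $3a_2 = 3/2$), so the nefness argument you lean on is simply unavailable for those weights. Conversely, weights satisfying $a_j\ge 3\,a_{j+1}$ decay geometrically, and the twist constant one extracts from that route does not decay like $\tfrac{\log k}{kr}$ as $k\to\infty$; it stays bounded below. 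So the harmonic sum $1+\tfrac12+\cdots+\tfrac1k$ in the numerator and, crucially, the $\tfrac{1}{kr}$ in front of it cannot be produced by the nef-decomposition induction you sketch.

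Second, your use of the Morse inequalities is logically off. If the twisted bundle $F$ were nef (as you assert), then the positivity of $(F^{n_k})$ alone would imply bigness by Riemann--Roch with vanishing of higher cohomology growth, and the Morse inequalities would be unnecessary. The whole point of Demailly's argument is that the bundle $\oc_{X_k}(\mathbf{a})\otimes\pi_{k,0}^*\oc(-\delta A)$ under consideration is \emph{not} nef; one must represent it as a difference of nef (or positive) classes and use the algebraic Morse inequality $h^0\ge \frac{m^{n_k}}{n_k!}(G^{n_k}-n_kG^{n_k-1}\cdot H)-o(m^{n_k})$, or work directly with a singular Finsler-type metric on the Green--Griffiths $k$-jet bundle and estimate $\int (\Theta)^{n_k}_{\ge 0}$. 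In \cite{Dem10}, the specific coefficient $\tfrac{1}{kr}(1+\tfrac12+\cdots+\tfrac1k)$ comes from a probabilistic rearrangement of the curvature tensor of a metric $\Psi_{h,p,\varepsilon}(\xi)=(\sum_s\varepsilon_s\lVert\xi_s\rVert^{2p/s})^{1/p}$ on $k$-jets, interpreted as an expectation over random directions $x_s\in S^{2r-1}$ and random scalings, not from a ``volume of the weight simplex'' as in your Step 3. Without that averaging computation, the origin of the factor $\tfrac{1}{kr}$ remains unexplained in your outline, which is precisely the analytic heart of the theorem.
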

By the Fundamental Vanishing theorem we know that for every entire curve 
$f:\cb\rightarrow X$, the $k$-jet $f_k:\cb\rightarrow X_k$ satisfies
$$
f_k(\cb)\subset \text{Bs} (H^0(X_k,\oc_{X_k}(m)\otimes \pi^*_{k,0} 
A^{-1}))\subsetneq X_k. 
$$

Assume that we have an entire curve $f:(\cb,T_{\cb})\rightarrow (X,T_X))$ such that its image in $X$ is Zariski dense. By the above 
theorem of Demailly, there exists an $N>0$ such that the lift of $f$ on the 
$N$th-stage Demailly-Semple tower can not be Zariski dense in $X_N$, therefore 
we can find an integer $k\geq0$ such that $f_j$ is Zariski dense in $X_j$ for 
each $0\leq j\leq k$, while the Zariski closure of the image of $f_{k+1}$ is 
$Z\subsetneq X_{k+1}$ which project onto $X_k$. Since $\text{rank}T_X=2$, $Z$ is 
a divisor of $X_{k+1}$. From the relation between $\text{Pic}(X_k)$ and 
$\text{Pic}(X)$ we know that $\oc_{X_k}(Z)\simeq \oc_{X_k}(\mathbf{a})\otimes 
\pi^*_{k,0}(B)$, for some $B\in \text{Pic}(X)$, $\mathbf{a}\in \mathbb{Z}^k$ 
and $a_k=m$. Therefore the projection $\pi_{k+1,k}:Z\rightarrow X_k$ is a 
ramified $m:1$ cover, which defines a rank $1$ multi-foliation $\fc_k\subset 
V_k$ on $X_k$, and $f_k:(\cb,T_\cb)\rightarrow (X_k,V_k)$ is tangent to this 
foliation. We define the linear subspace $W\subset T_Z\subset T_{X_{k+1}}|_Z$ to 
be the closure
$$
W:=\overline{T_{Z'}\cap V_{k+1}}
$$
taken on a suitable Zariski open set $Z'\subset Z_{\text{reg}}$ where the 
intersection $T_{Z'}\cap V_{k+1}$ has constant rank and is a subbundle of 
$T_{Z'}$. As is observed in \cite{Dem15}, we know that $\text{rank}W=1$ which 
is an 1-dimensional foliation. We first resolve the singularities of $Z$ to get 
a birational model $(\widetilde{Z},\widetilde{\fc})$ of $(Z,W)$ such that $\widetilde{Z}$ is smooth, then by the assumption in Theorem \ref{Green} we take a further finite sequence of blow-ups to get a new birational model $(Y,\fc)$ of $(\widetilde{Z},\widetilde{\fc})$, such that $\fc$ has only weakly reduced singularities. We now obtain a  
generically finite morphism $p:Y\rightarrow X_k$,  and the lift of $f$ to $Y$ denoted by $g:\cb\rightarrow Y$ is still a Zariski dense curve 
 tangent to $\fc$ satisfying $g=p\circ f_k$. Then we have
$$
K_Y\sim p^*K_{X_k}+R,
$$
where $R$ is an effective divisor whose support is contained in the 
ramification locus of $p$. We will call $X_k$ the \emph{critical Demailly-Semple 
	tower for $f$}. 

Now we state our conjectures about reduction of singularities to weakly reduced ones, and the generalization of Brunella Theorem to higher dimensional manifolds:
\begin{conjecture}\label{reduction of singularity}
Let $X$ be a K\"ahler manifold equipped with a foliation $\fc$ by curves. Then one can obtain a new birational model $(\widetilde{X},\widetilde{\fc})$ of $(X,\fc)$ by taking finite blowing-ups such that $\widetilde{\fc}$ has weakly reduced singularities.
\end{conjecture}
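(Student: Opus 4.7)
The plan is to decouple the two requirements of Definition \ref{reduced singularity} and attack them in sequence, with a termination argument handling their mutual interaction.

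First I would apply Hironaka's principalization to the coherent ideal sheaf $\jc_\fc$: a finite sequence of blow-ups with smooth centers contained in $\mathrm{Sing}(\fc)$ yields a birational morphism $\pi_1 : X_1 \to X$ such that $\pi_1^{-1}\jc_\fc = \oc_{X_1}(-D)$ with $D$ a simple normal crossing divisor. On each chart of $X_1$, the coefficients of the local generators of the pulled-back foliation become monomial multiples of a vector field of smaller generic multiplicity. Next, to secure condition (1), one must enforce non-dicriticalness, i.e.\ invariance of every irreducible exceptional component under the induced foliation. Here I would follow the strategy employed in the absolutely isolated case (see \cite{CCS97}, \cite{Tom97}, which generalize Seidenberg's theorem): perform additional blow-ups along the singular loci inside dicritical exceptional components, using weighted blow-ups whenever classical ones fail to strictly decrease an appropriate resolution invariant --- typically a lexicographic triple involving the Newton polyhedron of the local generator, the residual multiplicity, and the dimension of the dicritical locus.

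Third, to enforce condition (2), I would observe that once $\jc_\fc$ has been principalized to a monomial ideal in SNC coordinates, triviality of $\ic(\jc_\fc)$ reduces to a purely combinatorial bound on the multiplicities of the irreducible components of $D$: the multiplier ideal of a monomial ideal is computed by a standard Newton-polyhedron formula, so any chart violating $\ic(\jc_\fc) = \oc_X$ can be brought into compliance after further blow-ups that strictly lower the offending exponents. Moreover, since condition (2) is an $L^2$ condition rather than a prescribed normal form for the linear part, no eigenvalue-resonance analysis is required at this step.

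The main obstacle will be termination of the combined procedure: blow-ups performed for condition (1) may temporarily worsen the monomial exponents controlling condition (2), and vice versa. A successful proof requires designing a well-founded invariant on the pair $(X,\fc)$ whose lexicographic value strictly decreases under every admissible blow-up and simultaneously governs both conditions --- precisely the kind of invariant that makes the full resolution of foliations in dimension $n \geq 3$ a deep open problem. That said, the weaker notion of weakly reduced used here is specifically designed to bypass the subtle eigenvalue and resonance obstructions that block classical reduction, so an affirmative answer should be within reach by extending the weighted-blow-up techniques already available in the absolutely isolated case to the general setting, possibly combined with a Hironaka-style maximal-contact argument adapted to the tangent sheaf $T_\fc$ rather than to a hypersurface.
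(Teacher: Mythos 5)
The statement you are addressing is stated in the paper as a \emph{conjecture}, not a theorem: the author explicitly leaves it open, remarking only (after Proposition~\ref{summary ais}) that it can be verified when $\text{Sing}(\fc)$ consists of absolutely isolated singularities. So there is no proof in the paper against which to compare yours, and the question is whether your sketch actually closes the gap. It does not, and you are commendably candid about the main obstruction --- but I want to pin down the difficulties more precisely than your last paragraph does.

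First, and most seriously, your step of Hironaka-principalizing $\jc_\fc$ does not hand you a monomial $\jc_{\widetilde{\fc}}$. After a blow-up $\pi$, the ideal sheaf $\jc_{\widetilde{\fc}}$ of the \emph{induced} foliation is \emph{not} $\pi^{-1}\jc_\fc$: the generator $\widetilde{v}$ of $\widetilde{\fc}$ is obtained by pulling back $v=\sum a_i\partial/\partial z_i$ through the Jacobian of the coordinate change and then dividing out the maximal common factor, so the new coefficients mix the $\pi^*a_i$ with monomials coming from the Jacobian and a nontrivial cancellation. Hence principalizing $\jc_\fc$ gives you no control whatsoever over the Newton polyhedron of $\jc_{\widetilde{\fc}}$, and the "purely combinatorial" reduction of condition~(2) via Howald's theorem does not follow. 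Second, your reading of condition~(1) as "non-dicriticalness" is too loose: Definition~\ref{reduced singularity}(1) asks that for some log resolution $\pi$ of $\jc_\fc$ one has $T_{\hat{\fc}}=\pi^*T_\fc$ on the nose, and for a blow-up at a point of algebraic multiplicity $m$ the discrepancy is $-(m-1)E$ in the non-dicritical case (and $-mE$ in the dicritical one), so even non-dicritical blow-ups generically violate the equality unless $m=1$. So condition~(1) is not equivalent to invariance of the exceptional locus, and the strategy of Camacho--Cano--Sad/Tom\'e does not transplant as directly as you suggest. Finally, the termination problem you flag at the end is indeed the crux, and your own phrasing --- that designing the required invariant is "precisely the kind of invariant that makes the full resolution of foliations in dimension $n\geq 3$ a deep open problem" --- is an admission that the argument is incomplete. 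An honest assessment: you have written a plausible research program, not a proof, and its two internal steps as written have genuine errors (the identification of $\jc_{\widetilde{\fc}}$ with the transform of $\jc_\fc$, and the identification of condition~(1) with non-dicriticalness) that would need to be repaired before the termination question can even be correctly formulated.
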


\begin{rem}
	From Proposition \ref{summary ais} it is easy to show that foliations with absolutely isolated singularities can be resolved into weakly reduced ones after finite blowing-ups.
\end{rem}

\begin{conjecture}\label{generalize Brunella}
	Let $(X,\fc)$ be a K\"ahler 1-foliated pair. Suppose that there is a Zariski dense entire curve $f:\cb\rightarrow X$ tangent to $\fc$, then we have
$$
T[f]\cdot c_1(\det N_\fc)\geq 0.
$$
\end{conjecture}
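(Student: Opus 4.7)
The approach I would take is to generalize Brunella's proof of Theorem \ref{brunella} by constructing a singular Hermitian metric on $\det N_\fc$ whose curvature $(1,1)$-current is semi-positive, and then intersecting with $T[f]$. First, invoking Conjecture \ref{reduction of singularity} as an auxiliary input (or simply passing to any convenient resolution), Lemma \ref{bimero def} reduces matters to the case where $\fc$ has weakly reduced singularities on a birational model. On such a model the task is to construct a singular Hermitian metric $h$ on the line bundle $\det N_\fc = K_X^{-1}\otimes K_\fc$ whose curvature current is positive over $X \setminus \text{Sing}(\fc)$, by using the transverse dynamics of $\fc$: at each regular point one exploits the hyperbolic uniformization of the leaf through it, together with the transverse holonomy, to produce a (pseudo)metric on $T_\fc^\star = K_\fc$; combined with any smooth reference metric on $K_X^{-1}$, this gives the desired singular metric on $\det N_\fc$. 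The positivity of the curvature current of $h$ would then come from the plurisubharmonic variation of Poincar\'e metrics in analytic families of leaves, in the spirit of Yamaguchi--Berndtsson.

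Granted such a metric $h$, the intersection $\langle T[f], c_1(\det N_\fc)\rangle$ can be computed via the First Main Theorem (Theorem \ref{first}) and the framework of Remark \ref{sheaf}. Zariski density of $f$ ensures that its image is not contained in the polar locus of $h$, so the counting function $N_{f,h}(r)$ and the proximity function $m_{f,h}(r)$ are well-defined and non-negative. The intersection then decomposes into a non-negative curvature integral and non-negative Nevanlinna contributions, yielding
$$
\langle T[f], c_1(\det N_\fc)\rangle \geq 0.
$$

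The main obstacle is the construction of the singular metric $h$ in dimension $n \geq 3$. Brunella's surface argument leans heavily on Seidenberg's classification of reduced singularities and on the rigidity of codimension $1$ holonomy, neither of which is available in higher dimensions. Two concrete difficulties stand out: first, the required plurisubharmonic variation of hyperbolic metrics must be proven for an $(n-1)$-parameter family of leaves rather than the $1$-parameter families treated in the classical Yamaguchi theory, which presumably demands sharper positivity statements about direct images of relative canonical bundles; second, one must control the extension of $h$ across $\text{Sing}(\fc)$ under only a \emph{weakly} reduced resolution, a point intimately connected to the higher-dimensional analogue of the Brunella--Mendes pseudo-effectivity theorem for $K_\fc$, which is itself open. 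An alternative route would be to mimic McQuillan's Diophantine approximation argument, as generalized in Theorem \ref{diophantine}, by lifting $f$ to an appropriate Demailly--Semple tower and controlling the intersection numbers at increasing jet order; but the combinatorial complexity of the required blow-up procedure and the associated bookkeeping for weakly reduced singularities grow rapidly with $n$, and would likely require a substantial refinement of the simple-singularity analysis carried out in Proposition \ref{summary ais}.
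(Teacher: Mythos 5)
The statement you are asked to prove is labelled \emph{Conjecture}~\ref{generalize Brunella} in the paper, and the paper gives no proof of it. The paper only follows the conjecture with a remark identifying the main obstacle: when $\text{Sing}(\fc)$ is not discrete, one cannot even write down a convenient smooth representative of $c_1(\det N_\fc)$ as in the Baum--Bott formula, and the author suggests one may need to pass to leafwise (lamination) cohomology. So there is no argument of the paper to compare yours against; the claim is open.

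Your write-up is, to your credit, an honest strategy sketch rather than a claimed proof. The route you propose — constructing a singular Hermitian metric on $\det N_\fc \simeq K_X^{-1}\otimes K_\fc$ with semi-positive curvature current from leafwise Poincar\'e metrics plus plurisubharmonic variation in the spirit of Yamaguchi--Berndtsson, then feeding it into the First Main Theorem~\ref{first} and Remark~\ref{sheaf} framework — is exactly the natural higher-dimensional generalization of Brunella's argument, and the obstacles you flag (the jump from one-parameter to $(n-1)$-parameter transverse families, control of the metric across $\text{Sing}(\fc)$ under a merely \emph{weakly} reduced resolution, and the missing higher-dimensional analogue of Brunella's pseudo-effectivity theorem for $K_\fc$) are precisely why the statement is still a conjecture. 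However, as written there is a genuine gap: the existence of the semi-positively curved singular metric on $\det N_\fc$ — including the extension across the singular set — is assumed rather than established, and that is effectively the whole content of the conjecture. Note also that your fallback suggestion of a Diophantine-approximation-style argument via Theorem~\ref{diophantine} and the Demailly--Semple tower is aimed at bounding $T[f]\cdot T_\fc$, not $T[f]\cdot c_1(\det N_\fc)$; the paper uses the two statements (positivity of the tangent intersection and positivity of the normal intersection) as independent inputs, so a generalization of the former would not by itself give the latter. A genuine contribution here would have to either prove the positivity of direct-image curvature for $(n-1)$-dimensional transverse families in the foliated setting, or realize the paper's proposal of working with a laminar/leafwise cohomological representative of $c_1(\det N_\fc)$.
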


\begin{rem}
	If the singular set of $\fc$ is not discrete, it is difficult to construct a smooth 2-form in $c_1(\det N_\fc)$ as that appearing in Baum-Bott Formula (see Chapter 3 in \cite{Bru04}). Probably we should find some representation in the leafwise cohomology, i.e. cohomology group for laminations.
\end{rem}

Based on the conjectures above, we can prove the Green-Griffiths conjecture for complex surfaces:
\begin{proof}[Proof of Theorem \ref{Green}]
	Since we have
	\begin{align}\nonumber
	\det T_{X_k}=k\pi^*_{k,0}&\det T_X\otimes \oc_{X_k}(k+1,k,\ldots,2),\\\nonumber
	{\pi_{k,j}}_*T[f_{k}]&=T[f_j] \ \ \text{for}\ \ k\geq j,
	\end{align}
	by the tautological inequality and the condition of general type we have
	$$
	\langle T[f_k],\det T_{X_k}\rangle=-\sum_{j=1}^{k}(k-j+2)\langle  T[f_j], 
	\oc_{X_j}(-1)\rangle -k\langle T[f], K_X\rangle<0.
	$$
    Thus we obtain
	$$
	\langle T[g], K_Y\rangle=\langle T[f_k], K_{X_k}\rangle+\langle T[g], R\rangle >0.
	$$
	Conjecture \ref{reduction of singularity} tells us that we can find a new  birational pair $(\hat{Y},\hat{\fc})$ of $(Y,\fc)$ with weakly reduced singularities, then by Theorem \ref{negative normal main} we 
   have
 $$
 \langle T[\hat{g}], c_1(N_{\hat{\fc}})\rangle<0,
 $$
which is a contradiction to Conjecture \ref{generalize Brunella}, thus any entire curve must be algebraic degenerate.
\end{proof}

\noindent

\textbf{Acknowledgements}
I would like to thank my supervisor Professor Jean-Pierre Demailly for 
suggesting me to consider this problem, and for providing
numerous ideas during the course of stimulating exchanges. 
I would like to thank also Professor S\'ebastien Boucksom and Zhiyu Tian for further discussions about this work, and  Professor Sen Hu for his constant ecouragement. This research 
is supported by the China Scholarship Council.

\textsc{Institut Fourier \& University of Science and Technology of China} \\
\textsc{Ya Deng} \\
\verb"Email: Ya.Deng@fourier-grenoble.fr"


\begin{thebibliography}{99}
\bibitem[Bog77]{Bog77}
F.A. Bogomolov,Families of curves on a surface of general type. Soviet. Math. 
Dokl 18, 1294-1297 (1977).
\bibitem[Bru11]{Bru11}
M.Brunella, Uniformisation of foliations by curves, Holomorphic dynamical 
systems. Springer Berlin Heidelberg 2010: 105-163.
\bibitem[Bru04]{Bru04}
M. Brunella, Birational geometry of foliations, Impa, 2004.
\bibitem[Bru99]{Bru99}
M. Brunella, Courbes enti\`eres et feuilletages holomorphes, L’enseign. Math. 
45, 1999, 195-216.
\bibitem[Bru97]{Bru97}
M. Brunella, Some remarks on indices of holomorphic vector fields. Publicacions 
matematiques, 1997, 41(2): 527-544.
\bibitem[CCS97]{CCS97}
C. Camacho, F. Cano, P. Sad, Absolutely isolated singularities of holomorphic vector fields, Inventiones mathematicae, 1989, 98(2): 351-369.
\bibitem[DEG00]{DEG00}
J.P. Demailly, J. El Goul, Hyperbolicity of generic surfaces of high degree in 
projective-3 space. Amer. J. Math. 122 (2000) 515-546.
\bibitem[Dem15]{Dem15}
J.P. Demailly, Proof of the Kobayashi conjecture on the hyperbolicity of very 
general hypersurfaces, arXiv:1501.07625v3 [math.AG]
\bibitem[Dem14]{Dem14}
J.P. Demailly, Towards the Green-Griffiths-Lang conjecture, arXiv: 1412.2986v2 
[math.AG]
\bibitem[Dem12]{Dem12}
J.P. Demailly, Kobayashi pseudo-metrics, entire curves and hyperbolicity of 
algebraic varieties, Lecture notes in Summer School in Mathematics 2012.
\bibitem[Dem10]{Dem10}
J.P. Demailly, Holomorphic Morse inequalities and the Green-Griffiths-Lang 
conjecture, arXiv:1011.3636v2 [math. AG].
\bibitem[Dem97]{Dem97}
J.P. Demailly, Vari\'et\'es hyperboliques et é\'equations diff\'erentielles
alg´ebriques, Gaz. Math. 73 (juillet 1997), 3–23.
\bibitem[Den15]{Den15}
Y. Deng, Non-Degeneracy of Kobayashi Volume Measures for
Singular Directed Varieties And Applications. In preparation.
\bibitem[GPR13]{GPR13}
C. Gasbarri, G. Pacienza, E. Rousseau, Higher dimensional tautological inequalities and applications. Mathematische Annalen, 2013, 356(2): 703-735.
\bibitem[Laz04]{Laz04}
R. Lazarsfeld, Positivity in algebraic geometry II. Springer-Verlag. 2004.
\bibitem[McQ98]{McQ98}
M. McQuillan, Diophantine approximations and foliations, Publications 
Math\'ematiques de l'Institut des Hautes \'etudes Scientifiques, 1998, 87(1): 
121-174.
\bibitem[McQ08]{McQ08}
M. McQuillan, Canonical models of foliations. Pure and applied mathematics quarterly, 2008, 4(3).
\bibitem[Pau03]{Pau03}
M. Paun, Currents associated to transcendental entire curves on compact 
K\"ahler manifolds.
\bibitem[PS14]{PS14}
M. Paun, N. Sibony, Value distribution theory for parabolic Riemann surfaces, 
arXiv:1403.6596v2 [math.cv]
\bibitem[Siu02]{Siu02}
Y. T. Siu, Hyperbolicity in complex geometry, The legacy of Niels Henrik Abel.  
Springer Berlin Heidelberg, 2004: 543-566.
\bibitem[Tak08]{Tak08}
S. Takayama, On the uniruledness of stable base loci. Journal of Differential Geometry, 2008, 78(3): 521-541.
\bibitem[Tom97]{Tom97}
R. M. B. Tome, A resolution theorem for absolutely isolated singularities of 
holomorphic vector fields, Boletim da Sociedade Brasileira de Matemática, 1997, 
28(2): 211-231.

\end{thebibliography}
\end{document}